\newtheorem{theorem}{Theorem}
\newtheorem{lemma}{Lemma}
\newtheorem{corollary}{Corollary}
\newtheorem{definition}{Definition}
\newtheorem{conjecture}{Conjecture}
\begin{document}

\begin{center}
{\bf \Large  Representation formulas for $L^\infty$ norms of weakly convergent sequences
of gradient fields in homogenization}
\end{center}
\bigskip

\begin{center}
{\bf \large Robert  Lipton and Tadele Mengesha}
\end{center}
\begin{center}
Department of Mathematics,\\
Louisiana State University,\\
Baton Rouge, LA 70803, USA.
\end{center}
\bigskip
\begin{abstract}
We examine the composition of the $L^{\infty}$ norm with weakly convergent sequences of gradient fields associated with the homogenization of second order divergence form partial differential equations  with measurable coefficients. Here the sequences of coefficients are chosen to model heterogeneous media and are  piecewise constant and highly oscillatory. We identify  local representation formulas that in the fine phase limit provide upper bounds on the limit superior of the $L^{\infty}$ norms of gradient fields. The local representation formulas are  expressed in terms of the weak limit of the gradient fields
and local corrector problems. The upper bounds may diverge according to the presence of rough interfaces. We also consider the fine phase limits for layered microstructures and for sufficiently smooth periodic microsturctures. For these cases we are able to provide explicit local formulas for the limit of the $L^\infty$ norms of the associated sequence of gradient fields. Local representation formulas for lower bounds are obtained  for fields corresponding to  continuously graded periodic microstructures
as well as for general sequences of oscillatory coefficients. The representation formulas are applied to problems of optimal material design.
\end{abstract}

{\bf Keywords:} $L^{\infty}$ norms, Nonlinear composition, Weak limits, Material design, Homogenization.\\

{\bf AMS Subject Classifications:} 35J15, 49N60
\bigskip
\setcounter{theorem}{0}
\setcounter{definition}{0}
\setcounter{lemma}{0}
\setcounter{conjecture}{0}
\setcounter{corollary}{0}
\setcounter{equation}{0}
\section{Introduction}

Understanding the composition of nonlinear functionals with weakly convergent sequences is a central issue in the direct methods of the calculus of variations, homogenization theory and nonlinear partial differential equations. In this paper we discuss a composition motivated by problems of optimal design. To fix ideas consider a domain $\Omega\subset\mathbb{R}^d$, $d=2,3$, partitioned into two measurable subsets $\omega$ and  $\Omega/\omega$. Define the piecewise constant coefficient of thermal conductivity  taking the values $\alpha I$ for $x\in\omega$ and $\beta I$ for $x\in\Omega/\omega$ by $A(\omega)=(\alpha\chi_\omega+\beta(1-\chi_\omega))I$. Here $\chi_\omega$ is the characteristic function of $\omega$ with $\chi_\omega=1$ for  points in $\omega$ and zero otherwise and $I$ is the $d\times d$ identity matrix. Next consider a sequence of sets $\{\omega_n\}_{n=1}^\infty$ with indicator functions $\chi_{\scriptscriptstyle{\omega_n}}$ and the $H^1(\Omega)$ solutions $u_n$ of the boundary value problems $u_n=g$ on $\partial\Omega$ with $g\in H^{1/2}(\partial\Omega)$ and
\begin{equation}
-{\rm div}\left(A(\omega_n)\nabla u_n\right)=f
\label{divequ}
\end{equation}
for $f\in H^{-1}(\Omega)$.
The theory of homogenization \cite{Degeorgi}, \cite{Spagnolo}, \cite{Murattartar} asserts that there is a subsequence of sets, not relabeled, and a matrix valued coefficient $A^H(x)\in L^\infty(\Omega,\mathbb{R}^{d\times d})$ for which the sequence $u_{n}$ converges weakly in $H^1(\Omega)$ to $u^H\in H^1(\Omega)$ with
$u^H=g$ for $x\in\partial\Omega$ and
\begin{equation}
-{\rm div}\left(A^H\nabla u^H\right)=f.
\label{divequH}
\end{equation}
The compositions of interest are given by the $L^\infty$ norm taken over open subsets $S\subset\Omega$ and are of the form
\begin{equation}
\Vert\nabla u_{n}\Vert_{L^\infty(S)}={\rm esssup}_{x\in S}|\nabla u_{n}(x)|,
\label{morminfty}
\end{equation}
\begin{eqnarray}
\Vert\chi_{\scriptscriptstyle{\omega_n}}\nabla u_{n}\Vert_{L^\infty(S)}&\hbox{ and }&\Vert(1-\chi_{\scriptscriptstyle{\omega_n}})\nabla u_{n}\Vert_{L^\infty(S)},
\label{morminftyphase}
\end{eqnarray}
and we seek to understand the behavior of limits of the kind given by
\begin{eqnarray}\label{lims}
\liminf_{n\rightarrow\infty}\Vert\chi_{\scriptscriptstyle{\omega_n}}\nabla u_{n}\Vert_{L^\infty(S)}&\hbox{ and }&
\limsup_{n\rightarrow\infty}\Vert\chi_{\scriptscriptstyle{\omega_n}}\nabla u_{n}\Vert_{L^\infty(S)}.
\end{eqnarray}
In this paper we provide examples and  identify conditions for which it is possible to represent the limits of these compositions by  local formulas expressed in terms of the weak limit $\nabla u^H$. The representation formulas provide a multi-scale description useful for studying the composition.

To illustrate the ideas we display local formulas in the context of periodic homogenization. The unit period cell for the microstructure is $Y$ and we partition it into two sets $P$ and $Y/P$. To fix ideas we assume the set $P$ represents a single smooth particle, e.g. an ellipsoid.  The union of all particles taken over all periods is denoted by $\omega$. The coefficient $A(\omega)$ is a periodic simple function defined on $\mathbb{R}^d$ taking the value $\alpha I$ in $\omega$ and $\beta I$ in $\mathbb{R}^d/\omega$.  On rescaling by $1/n$, $n=1,2,\ldots$ the set given by the union of rescaled particles taken over all rescaled periods is denoted by $\omega_n$ and $\chi_{\omega_n}(x)=\chi_\omega(nx)$. We consider the sequence of coefficients $A(\omega_n)$ restricted to $\Omega$
and the theory of periodic homogenization \cite{BLP}, \cite{SanchezPalencia} delivers a constant  matrix $A^{H}$ of effective properties given by the formula
\begin{equation} 
A^{H}_{ij} = \int_{Y}A_{ik}(y)P_{kj}(y)dy
\end{equation}
where $P_{kj} = \partial _{x_{k}}\phi^{j}(y) + \delta_{kj}$ and $\phi ^{j}$ are  $Y$-periodic $H^{1}_{loc}(R^{d})$ solutions of the unit cell problems
\begin{equation}
\text{div}(A(y)(\nabla \phi^{j}(y) + {\bf e}^{j} ))= 0 \quad \text{in $Y$},
\end{equation}
where we have written  $A(y)=A(\omega)=(\alpha\chi_{\omega}(y)+(1-\chi_\omega(y))\beta)I$ for $y\in Y$.
It is well known that the associated energies taken over sets $S\subset\subset\Omega$ converge \cite{Spagnolo}, \cite{Murattartar}, i.e., 
\begin{eqnarray}
\lim_{n\rightarrow\infty}\int_S A^n\nabla u_n\cdot\nabla u_n dx&=&\int_S A^H\nabla u^H\cdot\nabla u^H dx\nonumber\\
&=&\int_{S\times Y} A(y)P(y)\nabla u^H(x)\cdot\nabla u^H(x)\,dydx.
\label{energies}
\end{eqnarray}
In this paper we show that the analogous formulas hold for $L^\infty$ norms and are given by the local representation formulas
\begin{eqnarray}
\lim_{n\rightarrow\infty}\Vert\chi_{\omega_n}(x)\nabla u_n\Vert_{L^\infty(S)}&=&\Vert\chi_\omega(y)P(y)\nabla u^H(x)\Vert_{L^\infty(S\times Y)},\hbox{ and }
\label{peridentinff1}\\
\lim_{n\rightarrow\infty}\Vert(1-\chi_{\omega_n}(x))\nabla u_n\Vert_{L^\infty(S)}&=&\Vert(1-\chi_\omega(y))P(y)\nabla u^H(x)\Vert_{L^\infty(S\times Y)},
\label{peridentinff2}
\end{eqnarray}
these formulas follow from Theorem \ref{periodicidentity}.

For general situations the question of finding local formulas is delicate as the
solutions of \eqref{divequ} with measurable coefficients are nominally in $H^1(\Omega)$ with gradients in $L^2(\Omega,\mathbb{R}^d)$. For sufficiently regular $f$, $g$, and $\Omega$, and in the absence of any other hypothesis on the coefficients, the theorems of Bojarski \cite{Bojarski}, for problems in $\mathbb{R}^2$, and Meyers \cite{Meyers}, for problems in $\mathbb{R}^d$, $d\geq 2$, guarantee that gradients belong to $L^{p}(\Omega,\mathbb{R}^d)$ for $2<p<p'$ with $p'$ depending on the aspect ratio $\beta/\alpha$. For the general
case one can not expect $p$ to be too large. The recent work of Faraco \cite{Faraco} shows that for $d=2$ and for $\beta=K>1$ and $\alpha=1/K$ that there exist coefficients associated with sequences of layered configurations $\omega_n$ made up of hierarchal laminations for which the sequence of gradients is bounded in $L^p_{loc}(\Omega,\mathbb{R}^d)$ for $p<p*=2K/(K-1)$ and is {\em divergent} in $L^{p}_{loc}(\Omega,\mathbb{R}^d)$ for   $p\geq p* $. This precise value for $p*$ was proposed earlier for sequences of laminated structures  using physical arguments in the work of Milton \cite{Milton}.  For measurable matrix valued coefficients $A(x)\in \mathbb{R}^{2\times 2}$ with eigenvalues in the interval $[1/K,K]$ the same critical exponent   $p*=2K/(K-1)$  holds, this result also motivated by \cite{Milton} is shown earlier in the work of Lionetti and Nesi  \cite{LionettiNesi}.

With these general results in mind we display, in section 2, a set of upper bounds on the limit superior of the compositions \eqref{morminftyphase} that hold with a minimal set of hypothesis on the sequence $\{\omega_{n}\}_{n=1}^\infty$. Here we  assume only that the sets $\omega_{n}$ are Lebesgue measurable thus the upper bound may diverge to $\infty$ for cases when these sets have corners or cusps.
The upper bound is given by a local representation formula expressed in terms of the weak limit $\nabla u^H$. It is given by the limit superior of a sequence of $L^\infty$ norms of local corrector problems driven by $\nabla u^H$.  For periodic microstructures the local correctors reduce to the well known solutions of the periodic cell problems associated with periodic homogenization \cite{BLP}, \cite{SanchezPalencia}.  In section 3 we provide a general set of sufficient conditions for which the limits \eqref{lims} agree and are given by a local representation formula see, Theorem \ref{Equality}. As before this formula is given in terms of a limit of a sequence of $L^\infty$ norms for solutions of local corrector problems driven by $\nabla u^H$. From a physical perspective the local formula measures the amplification or diminution of the gradient $\nabla u^H$ by the local microstructure. Formulas of this type have been developed earlier in the context of upper and lower bounds for the linear case \cite{liproy}, \cite{lipjmps}, \cite{lipsima} and lower bounds for the nonlinear case \cite{jimenezlipton}.

On the other hand when the boundary of the sets $\omega$ are sufficiently regular one easily constructs examples of coefficients $A(\omega)$
for which the gradients belong to $L^\infty(\Omega,\mathbb{R}^d)$. More systematic treatments developed in the  work of Bonnitier and Vogelius \cite{bonnitierVogelius}, Li and Vogelius \cite{LiVogelius}, and Li and Nirenberg \cite{LiNirenberg}  describe generic classes of coefficients $A(\omega)$ for which gradients of solutions belong to $L^\infty_{loc}(\Omega,\mathbb{R}^d)$. The earlier work  of Chipot, Kinderlehrer and Vergara-Caffarelli \cite{Chipot} establish higher regularity for coefficients $A(\omega)$ associated with laminated configurations.  
In section \ref{41} we apply the uniform convergence for simple laminates discovered in \cite{Chipot} to show that the sufficient conditions given by Theorem \ref{Equality} hold. We apply this observation to obtain an explicit local formula for the limits of compositions of the $L^{\infty}$ norm with weakly convergent sequences of gradient fields associated with layered microstructures. While in section \ref{42} we use the higher regularity theory for smooth periodic  microstructures developed  in \cite{LiNirenberg} to recover an explicit representation formula for the upper bound on the limit superior of compositions of the $L^{\infty}$ norm with weakly convergent sequences of gradient fields associated with periodic microstructures. Lower bounds on the limit inferior are developed in section 5 that agree with the upper bounds  and we recover explicit local formulas for the limits of compositions of the $L^{\infty}$ norm with weakly convergent sequences of gradient fields associated with periodic microstructures.

The $L^\infty$ norm of the field gradient inside each component material  \eqref{morminftyphase} is of interest in applications where it
is used to describe the strength of a composite structure. Here the strength of a component material is described by a threshold value of the $L^\infty$ norm of the gradient. If the $L^\infty$ norm exceeds the threshold inside $\omega_n$ then failure is initiated in that material and nonlinear phenomena such as plasticity and material degradation occur \cite{kellymac}, \cite{NuismerWhitney}.
The design of composite structures to forestall eventual failure initiation is of central interest for aerospace applications \cite{gosschristensen}.  For a given set of structural loads one seeks configurations $\omega$ that keep the local gradient field below the failure threshold inside each component material over as much of the structure as possible.  As is usual in design problems of this sort the problem is most often ill-posed (see, e.g. \cite{lipnato}) and there is no {\em best} configuration $\omega$. Instead one looks to identify sequences of configurations $\{\omega_n\}_{n=1}^\infty$  from which a {\em nearly} optimal configuration can be chosen.

The work of Duysinx and Bendsoe \cite{DesynxBendsoe} presents an insightful engineering approach to the problem of optimal design subject to constraints on the sup norm of the local stress inside a laminated material. The subsequent work of Lipton and Stuebner \cite{Liptstueb1}, \cite{Liptstueb3}, \cite{LiptstuebAIAA} develops the mathematical theory and provides numerical schemes for the design of continuously graded multi-phase elastic composites with constraints on the $L^\infty$ norm of the local stress or strain inside each material. More recent work by Carlos-Bellido, Donoso and Pedregal \cite{donsopedregal} provides the  mathematical relaxation of the $L^\infty$ gradient constrained design problem for two-phase heat conducting materials.
The feature common to all of these problems is that they involve weakly convergent sequences of gradients and their composition with $L^\infty$ norms of the type given by
\eqref{morminfty} and \eqref{morminftyphase}.
Motivated by the applications we develop an explicit local representation formula for the lower bound on \eqref{lims} for continuously graded periodic microstructures introduced for optimal design problems in \cite{liproy}, \cite{Liptstueb1}, \cite{Liptstueb3}, see section 5. A similar set of lower bounds have appeared earlier within the context of two-scale homogenization \cite{lipsima}. In section 6 we conclude the paper by outlining the connection between  optimal design problems with $L^\infty$ gradient constraints, local representation formulas, and the composition of the  $L^\infty$ norm with sequences  of gradients. Last it is pointed out that the results presented here can be extended without modification to the system of linear elasticity.

\setcounter{theorem}{0}
\setcounter{definition}{0}
\setcounter{lemma}{0}
\setcounter{conjecture}{0}
\setcounter{corollary}{0}
\setcounter{equation}{0}
\section{Mathematical background and upper bounds given by local representation formulas}

In this section we present upper bounds on the limit superior of sequences of $L^\infty$ norms of gradient fields associated with  G-convergent sequences of coefficient matrices. In what follows the coefficient matrices given by simple functions $A(x)$ taking the finite set of values $A_1,A_2,\ldots, A_N$ in the space of $d\times d$ positive definite symmetric matrices. Here no assumption on the sets $\omega_i$ where $A(x)=A_i$ are made other than that they are Lebesgue measurable subsets of $\Omega$.

We consider a sequence of coefficient matrices $A^n(x)=\sum_{i=1}^N\chi^i_n A_i$. Here $A^n(x)=A_i$ on the sets $\omega^i_n$ and the corresponding indicator function $\chi^i_n$ takes the value $\chi^i_n=1$ on $\omega^i_n$ and zero outside for $i=1,2,\dots,N$, with $\sum_{i=1}^N\chi^i_n=1$ on $\Omega$. We suppose that the sequence $\{A^n(x)\}_{n=1}^\infty$ is G-convergent with a G-limit given by the positive definite $d\times d$ coefficient matrix $A^H(x)$.
The G-limit is often referred to as the homogenized coefficient matrix.
For completeness we recall the definition of G-convergence as presented in \cite{Murattartar}:

\begin{definition}
\label{def1}
The sequence of matrices $\{A^n(x)\}_{n=1}^\infty$ is said to G-converge to $A^H(x)$ iff for every $\omega\subset\Omega$ with closure also contained in $\Omega$ and for every $f\in H^{-1}(\omega)$ the solutions $\varphi_n\in H^{1}_0(\omega)$ of
\begin{equation}
-{\rm div}\left(A^n\nabla \varphi_n\right)=f
\label{sequence}
\end{equation}
converge weakly in $H^1_0(\omega)$ to the $H^1_0(\omega)$ solution $\varphi^H$ of
\begin{equation}
-{\rm div}\left(A^H\nabla \varphi^H\right)=f.
\label{hlimit}
\end{equation}
\end{definition}

G-convergence is a form of convergence for solution operators and its relation to other notions of operator convergence are provided in \cite{Spagnolo}.
From a physical perspective each choice of right hand side $f$ in \eqref{sequence} can be thought of as an experiment
with the physical response given by the solution $\varphi_n$ of \eqref{sequence}. The physical response of heterogeneous materials  with coefficients belonging to a G-convergent sequence converge in $H_0^1(\omega)$ to that of the G-limit for every choice of sub-domain $\omega$. For sequences of oscillatory periodic and strictly stationary, ergodic random coefficients the G-convergence is described by the more well known notions of homogenization theory  \cite{BLP}, \cite{ZOK},  \cite{PapVaradan}, \cite{Spagnolo}, \cite{Murattartar}. We point out that the G-convergence described in Definition \ref{def1} is a specialization of the notion of H-convergence introduced in
\cite{Murattartar} which applies to sequences of non-symmetric coefficient matrices subject to suitable coercivity and boundedness conditions.

It is known \cite{Murattartar} that if $\{A^n\}_{n=1}^\infty$ G-converges to $A^H$, then for any $g\in H^{1/2}(\partial\Omega)$ and $f\in H^{-1}(\Omega)$,
the $H^1(\Omega)$ solutions $u_n$ of
\begin{equation}
-{\rm div}\left(A^n\nabla u_n\right)=f,\quad\quad u_{n} = g
\label{sequenceu}
\end{equation}
converge weakly in $H^1(\Omega)$ to the $H^1(\Omega)$ solution $u^H$ of
\begin{equation}
-{\rm div}\left(A^H\nabla u^H\right)=f, \quad\quad u^{H} = g.
\end{equation}

Last we recall the sequential compactness property of G-convergence \cite{Spagnolo}, \cite{Murattartar} applied to the case at hand.

\begin{theorem}
\label{compactness}
Given any sequence of simple matrix valued functions $\{A^n(x)\}_{n=1}^\infty$ there exists a subsequence $\{A^{n'}(x)\}_{n'=1}^\infty$ and a positive definite
$d\times d$ matrix valued function $A^H(x)$ such that the sequence  $\{A^{n'}(x)\}_{n'=1}^\infty$  G-converges to $A^H(x)$.
\end{theorem}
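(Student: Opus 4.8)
The plan is to derive this from the classical sequential compactness of $H$-convergence on a fixed ellipticity class, the only genuinely new ingredient being the observation that the standing hypothesis — $A^n$ simple with values drawn from the \emph{fixed} finite family $A_1,\dots,A_N$ of symmetric positive definite matrices — automatically supplies uniform ellipticity and boundedness. Concretely, I would first set $\alpha=\min_{1\le i\le N}\lambda_{\min}(A_i)>0$ and $\beta=\max_{1\le i\le N}\lambda_{\max}(A_i)<\infty$; then for every $n$ and a.e.\ $x\in\Omega$ the matrix $A^n(x)$ equals one of the $A_i$, so
\[
\alpha|\xi|^2\le A^n(x)\xi\cdot\xi\quad\text{and}\quad |A^n(x)\xi|\le\beta|\xi|\qquad(\xi\in\mathbb{R}^d),
\]
uniformly in $n$. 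Hence the whole sequence lies in the compactness class $M(\alpha,\beta,\Omega)$ of symmetric coefficient fields used in the work of Murat and Tartar \cite{Murattartar} and Spagnolo \cite{Spagnolo}.

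Next I would invoke the compactness theorem for $H$-convergence on $M(\alpha,\beta,\Omega)$: there exists a subsequence $\{A^{n'}(x)\}_{n'=1}^\infty$ and a matrix field $A^H(x)\in M(\alpha,\beta,\Omega)$ to which it $H$-converges. Positive definiteness of $A^H(x)$ is part of this conclusion, since the lower bound $\alpha|\xi|^2\le A^H(x)\xi\cdot\xi$ passes to the limit. To land exactly in the framework of Definition \ref{def1} I would then check that the limit is symmetric: because each $A^{n'}$ is symmetric, so is $A^H$ — this follows from the div--curl characterization of the $H$-limit applied to oscillating test fields, or may simply be quoted from \cite{Spagnolo}, \cite{Murattartar} — and for symmetric coefficients $H$-convergence coincides with the $G$-convergence of Definition \ref{def1}. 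Thus for every $\omega\subset\subset\Omega$ and every $f\in H^{-1}(\omega)$ the solutions $\varphi_{n'}\in H^1_0(\omega)$ of $-\mathrm{div}(A^{n'}\nabla\varphi_{n'})=f$ converge weakly in $H^1_0(\omega)$ to the $H^1_0(\omega)$ solution $\varphi^H$ of $-\mathrm{div}(A^H\nabla\varphi^H)=f$, which is precisely the assertion.

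I do not anticipate a substantive obstacle, since the statement is essentially a specialization of a known result; the proof is bookkeeping around two points that should not be glossed over. First, uniform ellipticity and boundedness are available \emph{only because the value set $\{A_1,\dots,A_N\}$ is independent of $n$}, which is what confines the sequence to a single class $M(\alpha,\beta,\Omega)$ — without this the extraction can fail. Second, one must verify the mild technical fact that the $G$-limit is again a symmetric positive definite matrix field, so that Definition \ref{def1} applies verbatim to $A^H$.
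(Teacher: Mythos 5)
Your proposal is correct and is essentially the paper's own route: the paper simply recalls this as the classical sequential compactness of $G$-convergence from \cite{Spagnolo}, \cite{Murattartar} without giving a proof, and your argument supplies exactly the standard justification, with the right observation that the fixed finite value set $\{A_1,\dots,A_N\}$ places the whole sequence in a single uniformly elliptic and bounded class.
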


For the remainder of the paper  we will suppose that  sequence of coefficients $\{A^{n}\}_{n=1}^\infty$ G-converges to $A^H$ and we will investigate
the behavior of the gradient fields inside each of the sets $\omega^i_n$. To this end we will consider the limits
\begin{eqnarray} 
\liminf_{n\rightarrow\infty}\Vert\chi^i_n\nabla u_{n}\Vert_{L^\infty(S)} &\hbox{ and }&
\limsup_{n\rightarrow\infty}\Vert\chi^i_n\nabla u_{n}\Vert_{L^\infty(S)},\hbox{ for $i=1,2\dots,N$,}
\end{eqnarray}
where $S\subset\Omega$ is an open set of interest with closure contained inside $\Omega$.

In order to proceed we introduce the local corrector functions associated with the sequence of coefficients $\{A^{n}\}_{n=1}^\infty$. Let $Y\subset \mathbb{R}^d$ be the unit cube centered at the origin. For  $r>0$ consider $\Omega_r^{int}=\{x\in\Omega:\,dist(x,\partial\Omega)>r\}$ and
for $x\in\Omega_r^{int}$ and $z\in Y$ we introduce the $Y$ periodic  $H^1(Y)$ solution $w^{r,n}(x,z)$ of
\begin{eqnarray}
-{\rm div}_z\left(A^{n}(x+rz)(\nabla_z w^{r,n}_{\overline{e}}(x,z)+\overline{e}) \right)=0, \hbox{for $z\in Y$},
\label{correct}
\end{eqnarray}
where $\overline{E}$ is a constant vector in $\mathbb{R}^d$ with respect to the $z$ variable.
Here $x$ appears as a parameter and the differential operators with respect to the $z$ variable  are indicated by subscripts.
For future reference we note that $w^{r,n}$ depends linearly on $\overline{e}$ and we define the corrector matrix $P^{r,n}(x,z)$ to be given by
\begin{eqnarray}
P^{r,n}(x,z)\overline{e}=\nabla_z w^{r,n}_{\overline{e}}(x,z)+\overline{e}.\label{corrector}
\end{eqnarray}
We are interested in the $L^\infty$ norm associated with each phase and introduce the modulation functions ${\mathcal M}^i(\nabla u^H)$ defined for $x\in\Omega$ given by \cite{liproy}
\begin{eqnarray}
\label{FieldModulation}
{\mathcal M}^i(\nabla u^H)(x)=\limsup_{r\rightarrow 0}\limsup_{n\rightarrow\infty}\Vert\chi^i_{n}(x+rz)(P^{r,n}(x,z)\nabla u^H(x))\Vert_{L^\infty(Y)}.\label{f}
\end{eqnarray}

In what follows we will denote the measure of $\omega\subset\Omega$ by $|\omega|$ and  state the following upper bound given by a local representation formula.

\begin{theorem}
\label{upperboundd1}
Let $A^{n}$ G-converge to $A^H$ and consider any open set $S\subset\Omega$ with closure contained inside $\Omega$. There exists a subsequence, not relabeled and a sequence of decreasing measurable sets $E_n\subset S$, with $|E_n|\searrow 0$ such that

\begin{eqnarray}
\limsup_{n\rightarrow\infty}\Vert\chi^i_{n}\nabla u_n\Vert_{L^\infty(S\setminus E_n)}\leq\Vert {\mathcal M}^i(\nabla u^H)\Vert_{L^\infty(S)},\hbox{ $i=1,2,\ldots,N$.}\label{ubound1}
\end{eqnarray}
\end{theorem}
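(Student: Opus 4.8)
The plan is to argue locally around a.e.\ point and then to globalise by a Vitali covering. Fix $i$ and assume $M_i:=\Vert\mathcal M^i(\nabla u^H)\Vert_{L^\infty(S)}<\infty$ (otherwise \eqref{ubound1} is vacuous for that $i$). Pick a point $x_0\in S$ which is simultaneously a Lebesgue point of $\nabla u^H$, of $A^H$, of $A^H\nabla u^H$ and of $A^H\nabla u^H\cdot\nabla u^H$, and at which $\mathcal M^i(\nabla u^H)(x_0)\le M_i$; all of this holds off a null set. Write $\xi_0=\nabla u^H(x_0)$ and $Q_\rho(x_0)=x_0+\rho Y$, and on $Q_\rho(x_0)$ introduce the local corrector comparison field
\[
v^{\rho}_n(y)=\xi_0\cdot(y-x_0)+\rho\,w^{\rho,n}_{\xi_0}\!\big(x_0,(y-x_0)/\rho\big),\qquad\text{so that}\qquad \nabla v^{\rho}_n(y)=P^{\rho,n}\!\big(x_0,(y-x_0)/\rho\big)\xi_0,
\]
which by \eqref{correct} satisfies $-\mathrm{div}(A^n\nabla v^{\rho}_n)=0$ in $Q_\rho(x_0)$. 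The change of variables $y=x_0+\rho z$ gives the key identity
$\sup_{y\in Q_\rho(x_0)}|\chi^i_n(y)\nabla v^{\rho}_n(y)|=\Vert\chi^i_n(x_0+\rho z)P^{\rho,n}(x_0,z)\xi_0\Vert_{L^\infty(Y)}$,
so that, directly from the definition \eqref{FieldModulation}, $\limsup_{\rho\to0}\limsup_{n\to\infty}\sup_{Q_\rho(x_0)}|\chi^i_n\nabla v^{\rho}_n|=\mathcal M^i(\nabla u^H)(x_0)\le M_i$.

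The technical core is the \emph{local corrector estimate}
\[
\limsup_{\rho\to0}\ \rho^{-d}\,\limsup_{n\to\infty}\int_{Q_\rho(x_0)}\big|\nabla u_n-\nabla v^{\rho}_n\big|^2\,dx=0 .
\]
To prove it I would use ellipticity, $\lambda\int_{Q_\rho}|\nabla u_n-\nabla v^{\rho}_n|^2\le\int_{Q_\rho}A^n(\nabla u_n-\nabla v^{\rho}_n)\cdot(\nabla u_n-\nabla v^{\rho}_n)$, and expand the right side into $\int_{Q_\rho}A^n\nabla u_n\cdot\nabla u_n-2\int_{Q_\rho}A^n\nabla u_n\cdot\nabla v^{\rho}_n+\int_{Q_\rho}A^n\nabla v^{\rho}_n\cdot\nabla v^{\rho}_n$ (in the spirit of Tartar's corrector argument; note no boundary matching is needed). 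As $n\to\infty$: the first term converges to $\int_{Q_\rho}A^H\nabla u^H\cdot\nabla u^H$ by the convergence of energy densities for G--convergent sequences (div--curl plus the Meyers estimate to exclude concentration on $\partial Q_\rho$); the third term equals $\rho^d A^{\rho,n,H}(x_0)\xi_0\cdot\xi_0$, where $A^{\rho,n,H}(x_0)$ is the effective matrix of the cell problem \eqref{correct} (the cell energy equals the effective matrix applied to $\xi_0$, and $\nabla v^{\rho}_n$ is bounded in $L^2(Q_\rho)$ uniformly in $n$ by this same identity), and it converges to $\rho^d A^{\rho,H}(x_0)\xi_0\cdot\xi_0$ because, by translation/scale invariance and locality of G--convergence, $z\mapsto A^n(x_0+\rho z)$ G--converges on $Y$ to $z\mapsto A^H(x_0+\rho z)$ and hence the periodic cell--problem effective matrices converge; the cross term is handled by the div--curl lemma applied to $A^n\nabla u_n\rightharpoonup A^H\nabla u^H$ (with $\mathrm{div}(A^n\nabla u_n)=-f$ fixed) and to $\nabla v^{\rho}_n$, which converges weakly in $L^2(Q_\rho)$ to a field with mean $\xi_0$, a cutoff near $\partial Q_\rho$ being inserted and the resulting boundary shell controlled uniformly in $n$ via Meyers. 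Dividing by $\rho^d$ and letting $\rho\to0$, all three limits collapse to $A^H(x_0)\xi_0\cdot\xi_0$: for the first term this is the Lebesgue-point property of $A^H\nabla u^H\cdot\nabla u^H$; for the other two it uses in addition the localisation property $A^{\rho,H}(x_0)\to A^H(x_0)$ of G--convergence (because $z\mapsto A^H(x_0+\rho z)\to A^H(x_0)$ strongly in $L^1(Y)$ at a Lebesgue point, and strong $L^1$ convergence of uniformly elliptic bounded coefficients forces G--convergence to the limit) together with the Lebesgue-point properties of $A^H$ and $A^H\nabla u^H$. Hence the combination first $-2\cdot$cross$+$third is $o(\rho^d)$ in the iterated limit. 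This energy/corrector estimate, with its intertwining of G--convergence localisation, the div--curl lemma and the Meyers exponent, is the step I expect to be the main obstacle.

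Granting the estimate, the remainder is measure-theoretic. Fix $\eta>0$ and $\varepsilon>0$; for $x_0$ as above choose $\rho=\rho(x_0)$, arbitrarily small, and then $n$ large, so that simultaneously $\sup_{Q_\rho(x_0)}|\chi^i_n\nabla v^{\rho}_n|<M_i+\tfrac{\eta}{2}$ (possible by the key identity and $\mathcal M^i(\nabla u^H)(x_0)\le M_i$) and $\rho^{-d}\int_{Q_\rho(x_0)}|\nabla u_n-\nabla v^{\rho}_n|^2\le\varepsilon$ (possible by the corrector estimate). Since $|\chi^i_n\nabla u_n|\le|\chi^i_n\nabla v^{\rho}_n|+|\nabla u_n-\nabla v^{\rho}_n|$, the set $\{y\in Q_\rho(x_0):|\chi^i_n\nabla u_n(y)|>M_i+\eta\}$ is contained in $\{y\in Q_\rho(x_0):|\nabla u_n-\nabla v^{\rho}_n|>\eta/2\}$, whose measure is at most $4\varepsilon\rho^d/\eta^2$ by Chebyshev; hence $\limsup_{n\to\infty}|\{y\in Q_{\rho(x_0)}(x_0):|\chi^i_n\nabla u_n(y)|>M_i+\eta\}|\le 4\varepsilon\rho(x_0)^d/\eta^2$. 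Applying the Vitali covering lemma to the family of such cubes yields a countable disjoint subfamily $\{Q_{\rho_j}(x_j)\}$ covering $S$ up to a null set; using $\sum_j\rho_j^d\le|\Omega|$ and dominated convergence for the series one gets $\limsup_{n\to\infty}|\{y\in S:|\chi^i_n\nabla u_n(y)|>M_i+\eta\}|\le 4\varepsilon|\Omega|/\eta^2$, and letting $\varepsilon\to0$ shows this $\limsup$ is $0$ for every $\eta>0$ and every $i$. Finally, taking $\eta=1/m$, choose $N_1<N_2<\cdots$ with $|\{y\in S:\exists i,\ |\chi^i_n\nabla u_n(y)|>M_i+1/m\}|<2^{-m}$ for all $n\ge N_m$, pass to the subsequence $\{u_{N_m}\}$, and set $E_{N_m}:=\bigcup_{\ell\ge m}\{y\in S:\exists i,\ |\chi^i_{N_\ell}\nabla u_{N_\ell}(y)|>M_i+1/\ell\}$; these sets decrease, $|E_{N_m}|\le 2^{\,1-m}\to0$, and on $S\setminus E_{N_m}$ one has $|\chi^i_{N_m}\nabla u_{N_m}|\le M_i+1/m$, so $\limsup_m\Vert\chi^i_{N_m}\nabla u_{N_m}\Vert_{L^\infty(S\setminus E_{N_m})}\le M_i$, which is \eqref{ubound1} after relabelling.
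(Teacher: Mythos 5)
Your proposal is correct in outline but reaches the key analytic input by a genuinely different route from the paper. The paper's proof is short: it simply invokes Corollary 3.3 of \cite{liproy} (the ``homogenization constraint'') to get that $\limsup_{n}|\{x\in S:\chi^i_n|\nabla u_n|>H+\delta\}|=0$ for every $\delta>0$ when $H=\Vert\mathcal M^i(\nabla u^H)\Vert_{L^\infty(S)}<\infty$, and then devotes all its effort to the diagonal extraction and the construction of the decreasing exceptional sets $E_K$ as unions of tail super-level sets --- a step which your final paragraph reproduces almost verbatim. What you do instead is attempt to \emph{prove} the measure-decay fact from scratch: a pointwise local corrector comparison at Lebesgue points of $\nabla u^H$, $A^H$, etc., an energy estimate in the style of Tartar's oscillating test functions (ellipticity, expansion of the quadratic form, div--curl for the cross term, convergence of cell energies for the third term), then Chebyshev and a Vitali covering to globalise. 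This buys self-containedness and makes transparent \emph{why} the modulation function controls the gradient; the price is that your ``technical core'' is itself a substantial theorem. In integrated-over-$x$ form it is precisely Lemma 5.5 of \cite{casadodiazcalvogomez}, which the paper quotes in Section 3 for the lower bound; had you used that integrated form directly, a Fubini--Chebyshev argument would replace the Lebesgue-point/Vitali machinery and shorten the proof considerably.

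Two caveats on the sketched core, neither fatal but both needing real work if you were to write it out. First, your third term requires that G-convergence of $z\mapsto A^n(x_0+\rho z)$ on $Y$ (defined in the paper via Dirichlet problems on subdomains, Definition 2.1) imply convergence of the \emph{periodic} cell-problem effective matrices and weak convergence of the periodic correctors $P^{\rho,n}(x_0,\cdot)\xi_0$; this is true in H-convergence theory but is not an immediate consequence of the stated definition and must be argued. Second, in the cross term you need to identify the weak $L^2(Q_\rho)$ limit $V^\rho$ of $\nabla v^\rho_n$ well enough to conclude $\rho^{-d}\int_{Q_\rho}A^H\nabla u^H\cdot V^\rho\to A^H(x_0)\xi_0\cdot\xi_0$; the mean-value property $\int_Y \tilde V^\rho=\xi_0$ together with the strong $L^2$ Lebesgue-point convergence of $A^H\nabla u^H(x_0+\rho\cdot)$ does close this, but that pairing argument should be made explicit rather than left implicit in ``a field with mean $\xi_0$.''
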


To proceed we introduce the distribution functions associated with the following sets $S^n_{i,t}$, $i=1,2,\ldots,N$, defined by
\begin{eqnarray}
S^n_{i,t}=\{x\in S:\,\chi^i_{n}|\nabla u_n|>t\}\label{seti}
\end{eqnarray}
given by
\begin{eqnarray}
\lambda_i^n(t)=|S_{i,t}^n|.\label{dist}
\end{eqnarray}

We state a second upper bound that follows from the homogenization constraint \cite{liproy}.

\begin{theorem}
\label{upperboundd2}
Let $A^{n}$ G-converge to $A^H$ and consider any open set $S\subset\Omega$ with closure contained inside $\Omega$. Suppose for $i=1,2,\ldots,N$ that
$\limsup_{n\rightarrow\infty}\Vert\chi^i_{n}\nabla u_n\Vert_{L^\infty(S)}=\ell^i<\infty$  and for every $\delta>0$ sufficiently small
there exist positive numbers $\theta^i_\delta>0$  such that
\begin{eqnarray}
\liminf_{n\rightarrow\infty}\lambda^n_{i}(\ell^i-\delta)>\theta^i_\delta.\label{nonzero}
\end{eqnarray}
 There exists a subsequence, not relabeled, such that
\begin{eqnarray}
\limsup_{n\rightarrow\infty}\Vert\chi^i_{n}\nabla u_n\Vert_{L^\infty(S)}\leq\Vert {\mathcal M}^i(\nabla u^H)\Vert_{L^\infty(S)}.\label{upbound1}
\end{eqnarray}
\end{theorem}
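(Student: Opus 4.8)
The plan is to upgrade the conclusion of Theorem \ref{upperboundd1} from convergence on the complement of the exceptional sets $E_n$ to convergence on all of $S$, by showing that under hypothesis \eqref{nonzero} the exceptional sets $E_n$ cannot carry the near-maximal values of $\chi^i_n|\nabla u_n|$. The structure of the argument I would use is a proof by contradiction combined with a careful bookkeeping of the distribution functions $\lambda^n_i(t)$.

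First I would invoke Theorem \ref{upperboundd1} to extract a subsequence (not relabeled) and a decreasing sequence of measurable sets $E_n\subset S$ with $|E_n|\searrow 0$ such that
\begin{equation}
\limsup_{n\rightarrow\infty}\Vert\chi^i_n\nabla u_n\Vert_{L^\infty(S\setminus E_n)}\leq\Vert\mathcal{M}^i(\nabla u^H)\Vert_{L^\infty(S)}=:m^i.
\end{equation}
Suppose for contradiction that $\ell^i=\limsup_n\Vert\chi^i_n\nabla u_n\Vert_{L^\infty(S)}>m^i$. Then along a further subsequence, for each large $n$ the essential supremum of $\chi^i_n|\nabla u_n|$ over $S$ exceeds $\ell^i-\delta$ for any fixed small $\delta$ with $\ell^i-\delta>m^i$, while the essential supremum over $S\setminus E_n$ is (eventually) at most $m^i+\delta'<\ell^i-\delta$ for suitable small $\delta'$. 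This forces the superlevel set $S^n_{i,\ell^i-\delta}=\{x\in S:\chi^i_n|\nabla u_n|>\ell^i-\delta\}$ to be contained, up to a null set, inside $E_n$. Hence $\lambda^n_i(\ell^i-\delta)=|S^n_{i,\ell^i-\delta}|\leq|E_n|$, and since $|E_n|\searrow 0$ we get $\liminf_{n\rightarrow\infty}\lambda^n_i(\ell^i-\delta)=0$, contradicting the hypothesis \eqref{nonzero} that this liminf is bounded below by $\theta^i_\delta>0$. This establishes \eqref{upbound1}.

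**The main obstacle** I anticipate is making the containment $S^n_{i,\ell^i-\delta}\subset E_n$ (mod null sets) rigorous and uniform in $n$: the quantity $\ell^i$ is only a $\limsup$, so I must pass to the subsequence where $\Vert\chi^i_n\nabla u_n\Vert_{L^\infty(S)}$ actually converges to $\ell^i$, and simultaneously the bound on $S\setminus E_n$ is itself only a $\limsup$, so I must be careful that for all sufficiently large $n$ both inequalities hold with a fixed gap. The choice of constants must be made in the right order: first fix $\delta>0$ small enough that $\ell^i-\delta>m^i$ (possible since $\ell^i>m^i$ by the contradiction hypothesis), then choose $\delta'$ with $m^i+\delta'<\ell^i-\delta$, and only then use the definitions of the two $\limsup$'s to find an index beyond which the separation is valid. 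One should also double-check that the same subsequence produced by Theorem \ref{upperboundd1} is compatible with the subsequence on which $\Vert\chi^i_n\nabla u_n\Vert_{L^\infty(S)}\to\ell^i$; since $\ell^i$ is defined as the $\limsup$ over the original sequence, after restricting to the Theorem \ref{upperboundd1} subsequence the $\limsup$ can only decrease, so if it were strictly smaller we would already be done — hence we may assume it still equals $\ell^i$ and extract the convergent sub-subsequence there. Finally, if $\ell^i\leq m^i$ the conclusion \eqref{upbound1} is immediate and there is nothing to prove, so the contradiction argument covers the only nontrivial case.
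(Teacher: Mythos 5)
Your argument is correct. Note first that the paper does not prove Theorem \ref{upperboundd2} at all --- it defers to \cite{liproy} --- so there is no in-text proof to match; but your derivation is a legitimate, essentially self-contained one. The core of your proof (if $\ell^i>\Vert{\mathcal M}^i(\nabla u^H)\Vert_{L^\infty(S)}=:H$, then for large $n$ along the Theorem \ref{upperboundd1} subsequence the superlevel set $S^n_{i,\ell^i-\delta}$ sits inside $E_n$ modulo a null set, so $\lambda^n_i(\ell^i-\delta)\le|E_n|\to 0$, contradicting \eqref{nonzero} since the liminf along a subsequence dominates the liminf along the full sequence) is sound. Two remarks. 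First, you can shortcut the route through Theorem \ref{upperboundd1} entirely: the ingredient the paper actually uses to prove Theorem \ref{upperboundd1} is the homogenization constraint \eqref{decreasing}, namely $\limsup_n\lambda^n_i(H+\delta)=0$ for every $\delta>0$; if $\ell^i>H$ one picks $\delta$ with $\ell^i-\delta\ge H+\delta$ and uses monotonicity of $t\mapsto\lambda^n_i(t)$ to get $\lambda^n_i(\ell^i-\delta)\le\lambda^n_i(H+\delta)\to 0$, contradicting \eqref{nonzero} with no diagonal construction of the sets $E_n$ needed. Second, your worry in the last paragraph contains a small misstatement: if the limsup along the Theorem \ref{upperboundd1} subsequence were strictly smaller than $\ell^i$ you would \emph{not} ``already be done'' (it could still exceed $H$); but this is harmless, because your contradiction argument never needs the subsequence limsup to equal $\ell^i$ --- it only needs the containment of $S^n_{i,\ell^i-\delta}$ in $E_n$ and the hypothesis \eqref{nonzero}, and in fact it yields the bound for the full sequence, which is stronger than the stated conclusion.
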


We provide a proof Theorem \ref{upperboundd1} noting that the proof of Theorem \ref{upperboundd2} is given in \cite{liproy}.

\begin{proof}
First note that the claim holds trivially if $\Vert {\mathcal M}^i(\nabla u^H)\Vert_{L^\infty(S)}=\infty$. Now suppose otherwise and set $\Vert {\mathcal M}^i(\nabla u^H)\Vert_{L^\infty(S)}=H<\infty$.
For this case Corollary 3.3 of \cite{liproy} shows directly that for any $\delta>0$ that the measure of the sets
\begin{eqnarray}
S^n_{i,H+\delta}=\{x\in S:\,\chi^i_n(x)|\nabla u_n(x)|>H+\delta\},
\label{si}
\end{eqnarray}
tends to zero as $n$ goes to $\infty$, i.e.,
\begin{eqnarray}
\limsup_{n\rightarrow\infty}\lambda_i^n(H+\delta)=\limsup_{n\rightarrow\infty}|S^n_{i,H+\delta}|= 0.
\label{decreasing}
\end{eqnarray}
We choose a sequence of decreasing positive numbers $\{\delta_\ell\}_{\ell=1}^\infty$, such that $\delta_\ell\searrow 0$ and from \eqref{decreasing} we can pick a subsequence of
coefficients $\{A^{n_j(\delta_1)}\}_{j=1}^\infty$ for which
\begin{eqnarray}
|S^{n_j(\delta_1)}_{i,H+\delta_1}|<2^{-j},\hbox{ $j=1,2,\ldots$}.
\label{21}
\end{eqnarray}
For $\delta_2$ we appeal again  to \eqref{decreasing} and pick out a subsequence of
$\{A^{n_j(\delta_1)}\}_{j=1}^\infty$ denoted by $\{A^{n_j(\delta_2)}\}_{j=1}^\infty$ for which
\begin{eqnarray}
|S^{n_j(\delta_2)}_{i,H+\delta_2}|<2^{-j},\hbox{ $j=1,2,\ldots$}.
\label{22}
\end{eqnarray}
We repeat this process for each $\delta_\ell$ to obtain a family of subsequences
$\{A^{n_j(\delta_\ell)}\}_{j=1}^\infty$, $\ell=1,2,\ldots$ such that $\{A^{n_j(\delta_{\ell+1})}\}_{j=1}^\infty\subset\{A^{n_j(\delta_{\ell})}\}_{j=1}^\infty$.
On choosing the diagonal sequence $\{A^{n_k(\delta_{k})}\}_{k=1}^\infty$ we form the sets
\begin{eqnarray}
E_K=\cup_{k\geq K}S_{i,H+\delta_k}^{n_k(\delta_k)}=\{x\in S:\,\chi^i_{n_k(\delta_k)}|\nabla u_{n_k(\delta_k)}|>H+\delta_k,\hbox{ for some $k\geq K$}\},
\label{diag}
\end{eqnarray}
with $E_{K+1}\subset E_K$. Noting that $|S_{i,H+\delta_k}^{n_k(\delta_k)}|<2^{-k}$, we see that $|E_K|<2^{-K+1}$.  Since $x\not\in E_K$ implies that
\begin{eqnarray}
\chi^i_{n_k(\delta_k)}|\nabla u_{n_k(\delta_k)}|<H+\delta_k\hbox{ for all } k\geq K,
\label{notinE}
\end{eqnarray}
we observe that
\begin{eqnarray}
\Vert\chi^i_{n_k(\delta_k)}\nabla u_{n_k(\delta_k)}\Vert_{L^\infty(S\setminus E_K)}<H+\delta_k\hbox{ for all } k\geq K,
\label{linfbound}
\end{eqnarray}
and we conclude that
\begin{eqnarray}
\limsup_{K\rightarrow\infty}\Vert\chi^i_{n_K(\delta_K)}\nabla u_{n_K(\delta_k)}\Vert_{L^\infty(S\setminus E_K)}\leq H,
\label{linflimbound}
\end{eqnarray}
with $|E_K|\searrow 0$ and the theorem is proved.
\end{proof}
\setcounter{theorem}{0}
\setcounter{definition}{0}
\setcounter{lemma}{0}
\setcounter{conjecture}{0}
\setcounter{corollary}{0}
\setcounter{equation}{0}
\section{Lower bounds and sufficient conditions for a local representation formula}

We suppose that  sequence of coefficients $\{A^{n})\}_{n=1}^\infty$ G-converges to $A^H$ and  investigate
the behavior of the gradient fields inside each of the sets $\omega^i_n$. Here we consider the limits
\begin{eqnarray}\label{limsi}
\liminf_{n\rightarrow\infty}\Vert\chi^i_n\nabla u_{n}\Vert_{L^\infty(S)},\hbox{ for $i=1,2\ldots,N$.}
\end{eqnarray}
and identify a general sufficient condition for obtaining a lower bound
on these quantities in terms of ${\mathcal M}^i(\nabla u^H)$.

Assume that $u_{n}$, $P^{r,n}$ and $u^{H}$ are defined as in the previous section and we consider an open
subset $S\subset\Omega$ with closure contained in $\Omega$. We write $\tau=dist(\partial S,\partial\Omega)>0$ and
set
\[
S_{\tau} = \{ x\in \Omega: dist(x,S)<\tau\}.
\]
For $r<\tau$ note that $ S\subset S_{r}\subset S_\tau\subset\Omega$. We next recall for $x\in S$
\begin{eqnarray}
{\mathcal M}^i(\nabla u^H)(x)= \limsup_{r \to 0}\limsup_{n \to \infty}\||\chi^i_n(x+ry)P^{r,n}(x,y)\nabla u^{H}(x)|^2\|_{L^{\infty}(Y)}.
\label{hyplowerlocal}
\end{eqnarray}

For this case the sufficient condition is based on the distribution function for the sequence $\{\chi^i_n(x+ry)P^{r,n}(x,y)\nabla u^{H}(x)\}$ and the lower bound is presented in the following theorem.
\begin{theorem}
\label{lowerbound1}
Let $A^{n}$ G-converge to $A^H$ and consider any open set $S\subset\Omega$ with closure contained inside $\Omega$. Suppose that
\[\|{\mathcal M}^i(\nabla u^H)\|_{L^\infty(S)}=\ell^i<\infty. \] Assume also that for all $\delta > 0$ small, there exist $\beta_{\delta}>0$ such that
\begin{equation}\label{assum}
\lim_{r\to 0}\liminf_{n \to \infty}|\{(x,y)\in S\times Y:\,|\chi_n^i(x+ry)P^{r,n}(x,y)\nabla u^{H}(x)|^2> (\ell^i)^2 - \delta \}|\geq \beta_{\delta} > 0.
\end{equation}
Then there exists a subsequence for which
\[
\lim_{r \to 0}\liminf_{n \to \infty}\|\chi^i_n\nabla u_n\|_{L^{\infty}(S_r)} \geq \|{\mathcal M}^i(\nabla u^H)\|_{L^{\infty}(S)}.
\]
\end{theorem}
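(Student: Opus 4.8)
The plan is to establish the lower bound by a quantitative two-scale argument that ties the local corrector fields $P^{r,n}(x,y)\nabla u^H(x)$ to the actual gradients $\nabla u_n$ on the enlarged set $S_r$. The starting point is a localization-and-freezing estimate: for fixed $r<\tau$ and fixed $x_0\in S$, the coefficients $A^n(x_0+ry)$ on the unit cell $Y$ are, up to a controlled error, the restriction of $A^n$ to the small cube $Q_r(x_0)=x_0+rY\subset S_r$. Using G-convergence together with the interior estimates that underlie the corrector construction \eqref{correct}, one compares the rescaled solution $y\mapsto u_n(x_0+ry)$ to the corrector $w^{r,n}_{\overline e}(x_0,y)$ with $\overline e=\nabla u^H(x_0)$; the comparison is in $H^1(Y)$ and carries an error that, after first sending $n\to\infty$ and then $r\to 0$, is negligible. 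This is the step where one must be careful that the oscillation of $\nabla u^H$ over $Q_r(x_0)$ and the $H^1$-discrepancy between $u_n$ and the affine-plus-corrector ansatz are both $o(1)$ in the appropriate iterated limit; this is essentially the content behind the definition \eqref{FieldModulation} and is the same mechanism used to prove Theorem \ref{upperboundd1}, run in reverse.

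Next I would convert the $H^1$ comparison into a statement about large values. Fix $\delta>0$ and let $\ell^i=\|\mathcal M^i(\nabla u^H)\|_{L^\infty(S)}$. Hypothesis \eqref{assum} guarantees a set of positive measure $\beta_\delta$ in $S\times Y$ on which $|\chi^i_n(x+ry)P^{r,n}(x,y)\nabla u^H(x)|^2>(\ell^i)^2-\delta$ in the $\liminf_n$, $\lim_{r\to0}$ sense. By Fubini this means that for a set of $x$ of positive measure in $S$, the slice $\{y\in Y:\ |\chi^i_n(x+ry)P^{r,n}(x,y)\nabla u^H(x)|^2>(\ell^i)^2-\delta\}$ has measure bounded below uniformly. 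Rescaling back via $y\mapsto x+ry$, this produces, inside the cube $Q_r(x)\subset S_r$, a subset of measure comparable to $r^d$ on which the corrector field exceeds $\sqrt{(\ell^i)^2-\delta}$. The $H^1$ closeness from the previous step, combined with a Chebyshev/measure argument, then forces $\chi^i_n|\nabla u_n|$ to exceed $\sqrt{(\ell^i)^2-2\delta}$ on a subset of $Q_r(x)$ of positive measure for all large $n$ and small $r$. Hence $\|\chi^i_n\nabla u_n\|_{L^\infty(S_r)}\geq\sqrt{(\ell^i)^2-2\delta}$, and letting $n\to\infty$, then $r\to0$, then $\delta\to0$ gives the claim. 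The passage to a subsequence is used both to extract the diagonal sequence realizing the iterated limits and to upgrade the measure-theoretic lower bounds to hold along a single sequence, exactly as in the diagonalization performed in the proof of Theorem \ref{upperboundd1}.

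The main obstacle I anticipate is the first step — making the corrector-to-solution comparison quantitative enough that an $L^\infty$-type lower bound survives. Weak $H^1$ convergence of $u_n$ to $u^H$ by itself says nothing about pointwise behavior of $\nabla u_n$, so one cannot simply read off $\|\nabla u_n\|_{L^\infty}$ from $\nabla u^H$; the corrector $P^{r,n}$ must genuinely capture the oscillatory part. The honest route is to use the strong two-scale/corrector convergence of the energy, i.e. \eqref{energies} and its localized versions, to control $\|\nabla u_n-P^{r,n}(x,\cdot/r)\nabla u^H\|_{L^2_{loc}}$, and then accept that an $L^2$ defect only yields lower bounds on $L^\infty$ norms in the measure-theoretic sense above — which is precisely why hypothesis \eqref{assum} is phrased with a positive-measure condition rather than a pointwise one, and why the conclusion is stated with $\liminf$ and the enlarged domain $S_r$ rather than $S$ itself. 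Provided one does not ask for more than this, the argument closes; sharpening it to $S$ in place of $S_r$ would require the uniform (Lipschitz) regularity theory invoked in Sections \ref{41} and \ref{42}, which is outside the hypotheses here.
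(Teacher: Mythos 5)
Your proposal is correct and follows essentially the same route as the paper: the key ingredient you identify (control of $\|\nabla u_n(x+ry)-P^{r,n}(x,y)\nabla u^H(x)\|_{L^2(S\times Y)}$ in the iterated limit) is exactly Lemma 5.5 of Casado-D\'iaz, Couce-Calvo and Mart\'in-G\'omez, which the paper also cites rather than proves, and your Chebyshev/set-inclusion transfer of the positive-measure hypothesis \eqref{assum} from the corrector field to $\chi^i_n|\nabla u_n|$ on $S_r$ is the same argument. The paper works directly with the product measure on $S\times Y$ without the Fubini slicing, but that is a cosmetic difference only.
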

\begin{proof}
Our starting point is Lemma 5.5 of \cite{casadodiazcalvogomez} which is described in the following lemma.
\begin{lemma}
\begin{eqnarray}
\lim_{r\to 0}\limsup_{n \to \infty}\int_{S}\int_{Y}|P^{r,n}(x,y)\nabla u^H(x)-\nabla u_n(x+ry)| ^{2}dydx=0.
\label{l2convergence}
\end{eqnarray}
\end{lemma}
On applying the lemma we observe that
\begin{equation}
\chi^i_n(x+ry)P^{r, n}(x,y)\nabla u^{H}(x)=\chi^i_n(x+ry)\nabla u_n(x+ry) + z^{r, n}(x,y)\quad
\forall (x,y)\in S\times Y,
\end{equation}
where
\begin{equation}\label{deco1}
\lim_{r\to 0}\limsup_{n \to \infty}\int_{S}\int_{Y}|z^{r,n}(x,y)| ^{2}dydx = 0.
\end{equation}
It follows  that 
\begin{equation}\label{deco2}
|\chi_n^i(x+ry)P^{r, n}(x,y)\nabla u^{H}(x)|^{2}=|\chi_n^i(x+ry)\nabla u_n(x+ry)|^{2} + F^{r,n}(x,y)\quad
\forall (x,y)\in S\times Y,
\end{equation}
where
\[
F^{r,n}(x,y) = |z^{r, n}(x,y)|^{2} + (z^{r, n}(x,y),\chi_n^i(x+ry)\nabla u_n(x+ry)).
\]
We show that $F^{r,n}(x,y) \to 0$ strongly in $L^{1}$ in the sense that
\[
\lim_{r\to 0}\limsup_{n \to \infty}\int_{S}\int_{
Y}| F^{r,n}(x,y)|dydx = 0.
\]
Indeed, from the definition and by Cauchy-Schwarz inequality we have
 \begin{equation}\label{small1}
\begin{split}
\int_{S}\int_{
Y}| F^{r,n}(x,y)|dydx&\leq \int_{S_{r}}\int_{Y}|z^{r, n}(x,y)|^{2}dydx \\
&+ \left(\int_{S}\int_{
Y}|z^{r, n}(x,y)|^{2}dydx\right)^{1/2}\left(\int_{S}\int_{
Y}|\nabla u^{n}(x+ry))|^{2}dydx\right)^{1/2}
\end{split}
\end{equation}
Moreover from standard a-priori estimates we know there is a constant $C>0$ independent of $r$ and $n$ for which,
\begin{equation}\label{standard1}
\int_{S}\int_{
Y}|\nabla u^{n}(x+ry))|^{2}dydx \leq C.
\end{equation}
The assertion follows on taking the limits in (\ref{small1}) and using  estimate  (\ref{standard1}) and equation (\ref{deco1}).

Now by Chebyshev's inequality, for every $\delta > 0$, we have the inequality
\[
|\{ (x,y) \in S\times Y: |F^{r,n} (x, y)| > \delta\}|\leq \frac{1}{\delta }\int_{S\times
Y}|F^{r,n}(x,y)|dydx
\]
and taking the limsup as $n\to \infty$ first and then as $r\to 0$, we see that
\begin{equation}\label{small2}
\lim_{r\to 0}\limsup_{n \to \infty}|\{ (x,y) \in S\times Y: |F^{r,n} (x, y)| > \delta\}| = 0
\end{equation}
From (\ref{deco2}) we see that
\[
\begin{split}
&\{(x,y)\in S\times Y:|\chi_n^i(x+ry)P^{r,n}(x,y)\nabla u^{H}(x)|^{2}> (\ell^i)^2 - \delta \}\subset\\
&\subset \{(x, y)\in S\times Y: |\chi_n^i(x+ry)\nabla u_n(x+ry)|^{2}> (\ell^i)^2-2\delta\}\cup\{ (x, y)\in S\times Y: |F^{r,n}(x,y)|>\delta\}.
\end{split}
\]
Therefore, applying (\ref{small2}) we obtain
\[
\begin{split}
&\lim_{r\to 0}\liminf_{n \to \infty}|\{(x,y)\in S\times Y:|\chi_n^i(x+ry)P^{r,n}(x,y)\nabla u^{H}(x)|^{2}> (\ell^i)^2 - \delta \}|\leq\\
&\leq \lim_{r\to 0}\liminf_{n \to \infty}|\{(x, y)\in S\times Y: |\chi_n^i(x+ry)\nabla u^{n}(x+ry)|^{2}> (\ell^i)^2-2\delta\}|.
\end{split}
\]
It follows from the last inequality that
\[
\lim_{r\to 0}\liminf_{n \to \infty}|\{(x,y)\in S\times Y: |\chi_n^i(x+ry)\nabla u^{n}(x+ry)|^{2} > (\ell^i)^2-2\delta\}|\geq \beta_{\delta} > 0,
\]
Here we have used our assumption ( \ref{assum}).
Therefore, there exist $R = R(\delta)$ and $N = N(\delta)$ such that
\[
|\{(x, y)\in S\times Y: |\chi_n^i(x+ry)\nabla u^{n}(x+ry)|^{2} > (\ell^i)^2-2\delta\}| >0, \quad \forall n\geq N(\delta), r\leq R(\delta).
\]
From the definition of the $ L^{\infty}$ norm it follows that,
\[
\| |\chi_n^i\nabla u^{n}|^{2}\|_{L^{\infty}(S_r)}\geq (\ell^i)^2 - 2\delta\quad \forall n\geq N(\delta), r\leq R(\delta).
\]
Taking the limit first in $n$ and then in $r$, and using the arbitrariness of of $\delta$, we get
\[
\lim_{r \to 0}\liminf_{n \to \infty}\|\chi_n^i\nabla u^{n}\|^2_{L^{\infty}(S_r)}\geq (\ell^i)^2,
\]
and the theorem follows.

\end{proof}

Last if we combine the hypotheses of theorems \ref{upperboundd2} and \ref{lowerbound1} we obtain a sufficient condition for a local representation formula for limits of compositions of the $L^\infty$ norm with
weakly convergent sequences of gradients associated with homogenization.

\begin{theorem}
\label{Equality}
Let $A^{n}$ G-converge to $A^H$ and consider any open set $S\subset\Omega$ with closure contained inside $\Omega$. Suppose for sufficiently small $r<\tau$, $ S\subset S_{2r}\subset S_\tau\subset\Omega$, for $i=1,2,\ldots,N$ that
$\limsup_{r \to 0}\limsup_{n\rightarrow\infty}\Vert\chi^i_{n}\nabla u_n\Vert_{L^\infty(S_r)}=\ell^i<\infty$  and for every $\delta>0$ sufficiently small
there exist positive numbers $\theta^i_\delta>0$  such that
\begin{eqnarray}
\label{nonzero2-1}
\limsup_{r \to 0}\limsup_{n\rightarrow\infty}|\{x\in S_r:\,\chi_n^i|\nabla u_n|>\ell^i-\delta)\}|\geq\theta^i_\delta>0,
\end{eqnarray}
in addition suppose that $\limsup_{r \to 0}\|{\mathcal M}^i(\nabla u^H)\|_{L^\infty(S_r)}=\tilde{\ell}^i<\infty$ and for all $\delta > 0$ small, there exist $\beta_{\delta}>0$ such that
\begin{equation}\label{assum2}
\lim_{r\to 0}\liminf_{n \to \infty}|\{(x,y)\in S_r\times Y:\,|\chi_n^i(x+ry)P^{r,n}(x,y)\nabla u^{H}(x)|^2> (\tilde{\ell}^i)^2 - \delta \}|\geq \beta_{\delta} > 0.
\end{equation}
There exists a subsequence, not relabeled, such that
\begin{eqnarray}
\lim_{r \to 0}\lim_{n\rightarrow\infty}\Vert\chi^i_{n}\nabla u_n\Vert_{L^\infty(S_r)}=\lim_{r \to 0}\Vert {\mathcal M}^i(\nabla u^H)\Vert_{L^\infty(S_r)}.\label{equal1}
\end{eqnarray}
\end{theorem}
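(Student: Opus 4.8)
The plan is to prove \eqref{equal1} by sandwiching the gradient norms between $\tilde{\ell}^i$ from both sides and then passing to a subsequence in which the double limit is literal. First I would record two monotonicity facts: for fixed $n$ the map $r\mapsto\|\chi^i_n\nabla u_n\|_{L^\infty(S_r)}$ is non-decreasing (since $S_r\subset S_{r'}$ for $r<r'$), hence so are $r\mapsto\liminf_{n}\|\chi^i_n\nabla u_n\|_{L^\infty(S_r)}$, $r\mapsto\limsup_{n}\|\chi^i_n\nabla u_n\|_{L^\infty(S_r)}$, and $r\mapsto\|{\mathcal M}^i(\nabla u^H)\|_{L^\infty(S_r)}$. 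Consequently, for each of these quantities $\limsup_{r\to0}$, $\liminf_{r\to0}$, and $\lim_{r\to0}$ all agree with the infimum over $r>0$; in particular $\limsup_n\|\chi^i_n\nabla u_n\|_{L^\infty(S_r)}\searrow\ell^i$ and $\|{\mathcal M}^i(\nabla u^H)\|_{L^\infty(S_r)}\searrow\tilde{\ell}^i$ as $r\searrow0$, and finiteness of $\tilde{\ell}^i$ makes $\|{\mathcal M}^i(\nabla u^H)\|_{L^\infty(S_r)}$ finite for all small $r$. It then suffices to prove $\ell^i\le\tilde{\ell}^i$ and $\liminf_{r\to0}\liminf_n\|\chi^i_n\nabla u_n\|_{L^\infty(S_r)}\ge\tilde{\ell}^i$.

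For the upper bound I would argue by contradiction, directly from Corollary~3.3 of \cite{liproy} (the ingredient used in the proof of Theorem~\ref{upperboundd1}); this is the analogue of Theorem~\ref{upperboundd2}, but one cannot simply quote it because here both the threshold and the domain vary with $r$. Corollary~3.3 applied on the open set $S_r$ (which has closure in $\Omega$ for small $r$) asserts that for every $\eta>0$, $|\{x\in S_r:\chi^i_n|\nabla u_n|>\|{\mathcal M}^i(\nabla u^H)\|_{L^\infty(S_r)}+\eta\}|\to0$ as $n\to\infty$. If $\ell^i>\tilde{\ell}^i$, fix $\delta$ with $0<\delta<\ell^i-\tilde{\ell}^i$ small enough that \eqref{nonzero2-1} applies; since $\|{\mathcal M}^i(\nabla u^H)\|_{L^\infty(S_r)}\searrow\tilde{\ell}^i<\ell^i-\delta$, for all small $r$ the choice $\eta=\ell^i-\delta-\|{\mathcal M}^i(\nabla u^H)\|_{L^\infty(S_r)}>0$ gives $|\{x\in S_r:\chi^i_n|\nabla u_n|>\ell^i-\delta\}|\to0$ as $n\to\infty$, hence $\limsup_{r\to0}\limsup_{n\to\infty}|\{x\in S_r:\chi^i_n|\nabla u_n|>\ell^i-\delta\}|=0$, contradicting \eqref{nonzero2-1}. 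Therefore $\ell^i\le\tilde{\ell}^i$.

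For the lower bound I would rerun the proof of Theorem~\ref{lowerbound1} with $S$ replaced by $S_r$ and $\ell^i$ replaced by $\tilde{\ell}^i$, feeding in \eqref{assum2}. Lemma~5.5 of \cite{casadodiazcalvogomez} localizes to the shrinking family (apply it on a fixed $S_{r_0}\subset\subset\Omega$ and restrict to $S_r\subset S_{r_0}$ for $r<r_0$), so $\lim_{r\to0}\limsup_{n}\int_{S_r}\int_Y|P^{r,n}(x,y)\nabla u^H(x)-\nabla u_n(x+ry)|^2\,dy\,dx=0$, while the hypothesis $S\subset S_{2r}\subset S_\tau\subset\Omega$ ensures $x+ry\in S_{2r}\subset\Omega$ for $(x,y)\in S_r\times Y$. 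Then the decomposition $|\chi^i_n(x+ry)P^{r,n}(x,y)\nabla u^H(x)|^2=|\chi^i_n(x+ry)\nabla u_n(x+ry)|^2+F^{r,n}(x,y)$ with $F^{r,n}\to0$ in $L^1(S_r\times Y)$ in the $\lim_{r\to0}\limsup_n$ sense — obtained exactly as in the proof of Theorem~\ref{lowerbound1} from the a priori bound $\int_{S_r}\int_Y|\nabla u_n(x+ry)|^2\,dy\,dx\le C$ and Cauchy--Schwarz — together with Chebyshev's inequality for $F^{r,n}$ and with \eqref{assum2}, gives $\lim_{r\to0}\liminf_{n}|\{(x,y)\in S_r\times Y:|\chi^i_n(x+ry)\nabla u_n(x+ry)|^2>(\tilde{\ell}^i)^2-2\delta\}|\ge\beta_\delta>0$ for every small $\delta$. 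Hence for small $r$ and large $n$ that set has positive measure, forcing $\|\chi^i_n\nabla u_n\|_{L^\infty(S_{2r})}^2\ge(\tilde{\ell}^i)^2-2\delta$; letting $n\to\infty$, then $r\to0$ (by monotonicity), then $\delta\to0$, and re-indexing $\rho=2r$, I get $\lim_{\rho\to0}\liminf_{n}\|\chi^i_n\nabla u_n\|_{L^\infty(S_\rho)}\ge\tilde{\ell}^i$.

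Combining the two bounds with the trivial $\liminf_{r\to0}\liminf_n\le\limsup_{r\to0}\limsup_n$ gives
\[
\tilde{\ell}^i\le\liminf_{r\to0}\liminf_{n\to\infty}\|\chi^i_n\nabla u_n\|_{L^\infty(S_r)}\le\limsup_{r\to0}\limsup_{n\to\infty}\|\chi^i_n\nabla u_n\|_{L^\infty(S_r)}=\ell^i\le\tilde{\ell}^i,
\]
so all four are equal to $\tilde{\ell}^i=\lim_{r\to0}\|{\mathcal M}^i(\nabla u^H)\|_{L^\infty(S_r)}$. To obtain the literal double limit asserted in \eqref{equal1}, I would pick $r_j\searrow0$ and, by a single diagonal extraction valid for all the finitely many indices $i$, pass to a subsequence in $n$ along which $\lim_n\|\chi^i_n\nabla u_n\|_{L^\infty(S_{r_j})}$ exists for every $j$; this limit lies between $\liminf_n$ and $\limsup_n$ on $S_{r_j}$ (both converging to $\tilde{\ell}^i$ as $j\to\infty$) and is monotone in $j$, hence converges to $\tilde{\ell}^i$, which is \eqref{equal1}. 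The main obstacle I anticipate is purely the bookkeeping of the coupled limits $r\to0$, $n\to\infty$ and the nested domains $S_r\subset S_{2r}\subset S_\tau$: the upper-bound contradiction works only because $\|{\mathcal M}^i(\nabla u^H)\|_{L^\infty(S_r)}$ decreases to $\tilde{\ell}^i$ so that a fixed threshold $\ell^i-\delta$ eventually exceeds it, and the lower bound works only because Lemma~5.5 of \cite{casadodiazcalvogomez} and the a priori $L^2$ estimate survive uniformly over $\{S_r\}$; the genuine analysis is already in those two cited inputs, and Theorem~\ref{Equality} is their assembly.
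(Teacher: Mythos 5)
Your proposal is correct and follows exactly the route the paper intends: the paper offers no separate proof of Theorem \ref{Equality} beyond the remark that it follows by combining Theorems \ref{upperboundd2} and \ref{lowerbound1}, and your argument is precisely that assembly --- the upper bound via Corollary 3.3 of \cite{liproy} (the engine behind Theorems \ref{upperboundd1} and \ref{upperboundd2}) and the lower bound by rerunning the proof of Theorem \ref{lowerbound1} on $S_r$ with hypothesis \eqref{assum2}. The monotonicity-in-$r$ bookkeeping and the diagonal extraction making the double limit in \eqref{equal1} literal are details the paper leaves implicit, and you supply them correctly.
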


\setcounter{theorem}{0}
\setcounter{definition}{0}
\setcounter{lemma}{0}
\setcounter{conjecture}{0}
\setcounter{corollary}{0}
\setcounter{equation}{0}
\section{Local representation formula for layered and periodic microstructures}
We now describe sequences of configurations  for which one has equality in the spirit of \eqref{equal1}. The first class of configurations are given by  sequences of finely layered media. The second class is given by a sequence of progressively finer periodic microstructures comprised of inclusions with smooth boundaries.  In what follows the results of \cite{Chipot} provide the sufficient conditions \eqref{nonzero2-1} and \eqref{assum2} for  the case of finely layered media. While the higher regularity results of \cite{LiVogelius} and \cite{LiNirenberg} allow for the computation of an upper bound for the periodic case. This upper bound agrees with an explicit lower bound developed in section 5. We note that the lower bound for the periodic case can also be obtained using the earlier results given in \cite{lipsima}.

In order to proceed let us recall the fundamental results from homogenization theory for periodic media.
We denote a $d$ dimensional cube centered at $x$ and of side length $r$ by $Y(x,r)$. For the unit cube centered at the origin we abbreviate the notation and write $Y$. The coefficient $A(y)$ is a periodic simple function defined on the unit period cell $Y$ taking the $N$ values $A_i$, $i=1,\ldots,N$ in the space of positive symmetric $d\times d$ matrices. We denote the indicator functions of the sets $Y_i$ where $A(y)=A_i$ by $\chi^i$ and write $A(y)=\sum_{i=1}^N A_i\chi^i(y)$.
It is well known from the theory of periodic homogenization \cite{BLP} that the sequence of coefficients $A^{n}(x) = A(nx)$ $G-$ converge to the homogenized constant matrix $A^{H}$ given by the formula
\begin{equation}\label{effectivematrix}
A^{H}_{ij} = \int_{Y}A_{ik}(y)P_{kj}(y)dy
\end{equation}
where $P_{kj} = \partial _{x_{k}}\phi^{j}(y) + \delta_{kj}$ and $\phi ^{j}$ are  $Y$-periodic $H^{1}_{loc}(R^{d})$ solutions of the cell problems
\begin{equation}\label{cellproblemperiodic}
\text{div}(A(y)(\nabla \phi^{j}(y) + {\bf e}^{j} ))= 0 \quad \text{in $\mathbb{R}^{d}$},
\end{equation}
where this equation is understood in the weak sense, i.e.,  
\begin{equation}
\label{periodicweakform}
\int_{Y} (A(y)(\nabla \phi^{j}(y) + {\bf e}^{j} ), \nabla \psi)dy = 0,  \quad \forall \psi \in H^{1}_{per}(Y).
\end{equation}
For periodic microstructures we define the  modulation function by
\begin{eqnarray}
{\mathcal M}^i(\nabla u^H)(x)=\||\chi^i(\cdot)P(\cdot)\nabla u^{H}(x)| \|_{L^\infty(Y)} \quad i=1,\cdots,N.
\end{eqnarray}

\subsection{Laminated microstructure}
\label{41}
The layered configurations as introduced in this section are a special class of periodic configurations.
To fix ideas we consider a two dimensional problem and partition the unit period square  $Y\subset \mathbb{R}^{2}$ for the layered material as follows:
\[
Y_{1}  = \{(y_{1},y_{2})\in Y: -\frac{1}{2}\leq y_{1}\leq-\frac{1}{2}+\theta\quad Y_{2} = \{(y_{1},y_{2})\in Y: -\frac{1}{2}+\theta\leq y_{1}\leq \frac{1}{2}\}
\]
where $\theta$ is a specified value in the interval $(0, 1).$ Let $\chi^1$ and $\chi^2$ denote the indicator functions of $Y_1$ and $Y_2$ respectively and
consider  
the Y-periodic matrix function  $A(y)$  given by
\[
A(y) = \alpha I \chi^1(y) + \beta I \chi^2(y),\]
for positive constants $\alpha<\beta$. $I$ is the $2\times 2$ identity matrix.
Let $\Omega\subset\mathbb{R}^2$ and  $u_{n}$ be the  $H^{1}(\Omega)$ solution to
\begin{equation}
-{\rm div}\left(A(nx)\nabla u_n\right)=f  ~~ \text{in $\Omega$ and} \quad u^{n}=0~~ \text{on $\partial \Omega$}.
\end{equation}
Then  $ u_{n}$ converges weakly in $H^1(\Omega)$ as $n\rightarrow\infty$ to the $H^1(\Omega)$ solution $u^H$ of
\begin{equation}
-{\rm div}\left(A^H\nabla u^H\right)=f, ~~ \text{in $\Omega$ and} \quad u^{H}=0~~ \text{on $\partial \Omega$}.
\label{hlimitu}
\end{equation}
where  $A^{H}$ is determined using the formula \eqref{effectivematrix}.
The gradient of solutions of the cell problem \eqref{cellproblemperiodic} for layered materials are given by
\[
\begin{split}
\nabla \phi^{1}(y) & = \left(\frac{(1-\theta)(\beta -\alpha)}{\theta\beta + (1-\theta)\alpha}\chi^1(y) +  \frac{\theta(\beta - \alpha)}{\theta\beta + (1-\theta)\alpha}\chi^2(y)\right){\bf e}^{1}
\end{split}
\]
and
\[
\nabla \phi^{2}(y) = {\bf e }^{2}\quad \text{for all $y\in Y$}.
\]

We define the constants
\begin{eqnarray}
a_{h}=\frac{\alpha \beta}{\theta\beta + (1-\theta)\alpha}~~\text{ and}~~ a_{m}=\theta\alpha+(1-\theta)\beta,
\label{arithhar}
\end{eqnarray}
and introduce the $Y$  periodic scalar coefficient $a(y)=\alpha\chi^1(y)+\beta\chi^2(y)$.
A simple calculation gives
\[
P(y)= \begin{bmatrix}
     p_{11}(y)    &0\\
0&1
        \end{bmatrix}\quad\text{where }~~ p_{11}(y)=\frac{a_h}{a(y)}
\]
The homogenized matrix $A^{H}$ is given by
\[
A^{H} = \begin{bmatrix}
         a_{h}&0\\
0&a_{m}
        \end{bmatrix}.
\]
The modulation function for each phase is given by:
\[
\begin{split}
{\mathcal M}^{1}(\nabla u^H)(x) &= \sqrt{\left(\frac{\beta}{\theta\beta + (1-\theta)\alpha}\partial_{x_{1}} u^{H}\right)^{2} + (\partial_{x_{2}}u^{H})^{2}}\\
{\mathcal M}^{2}(\nabla u^H)(x) &= \sqrt{\left(\frac{\alpha}{\theta\beta + (1-\theta)\alpha}\partial_{x_{1}} u^{H}\right)^{2} + (\partial_{x_{2}}u^{H})^{2}}
\end{split}
\]
We  now apply the  regularity and convergence  results associated with G-convergent coefficients for sequences of layered materials \cite{Chipot}.   For right hand sides $f\in H^{1}(\Omega)$ there exists a $p>2$ such that for any subdomain $\Omega'\Subset \Omega$
\[u^{n} \in H^{1, \infty}(\Omega')\quad \text{and}~~ \partial _{x_{2}}u_n, ~a(nx)\partial_{x_{1}}u_n\in H^{1, p}(\Omega')\]
 with the estimate that for some  $C=C(\alpha, \beta,\Omega', \Omega)$,
\begin{equation}
\label{Chipotestimate}
\|\partial _{x_2}u_n\|_{H^{1,p}(\Omega')} + \|a(nx)\partial_{x_1}u_n\|_{H^{1, p}(\Omega')} \leq C\|f\|_{H^{1}(\Omega)},   
\end{equation}
see \cite{Chipot}.
The Sobolev embedding theorem implies that $\{\partial _{x_{2}}u_n\}_{n=1}^\infty $ and $ \{a(nx)\partial_{x_{1}}u_n\}_{n=1}^\infty$ are equicontinuous families over $\Omega'$ and  uniformly bounded in $C(\Omega')$. Then from \eqref{Chipotestimate} and the weak convergence $u_n \rightharpoonup u^{H}$ in $H^{1}(\Omega)$ it follows that  for a subsequence
\begin{equation}
\partial _{x_{2}}u_n \to \partial _{x_{2}} u^{H}, \quad \quad a(nx)\partial _{x_{1}}u_n \to a_{h}\partial _{x_{1}} u^{H}\quad \text{uniformly in $\Omega'$}.
\label{uni}
\end{equation}
We observe that
\begin{equation}
\alpha|\partial_{x_1}u_n-p_{11}(nx)\partial_{x_1}u^H|\leq a(nx)|\partial_{x_1}u_n-p_{11}(nx)\partial_{x_1}u^H|=|a(nx)\partial_{x_1}u_n-{a_h}\partial_{x_1}u^H|,
\label{identuniform}
\end{equation}
and on applying \eqref{uni} and noting that $P(y)$ is constant inside each phase we see for $i=1,2$ that
\begin{eqnarray}
 |\chi^{i}(nx)\nabla u_n| &=& |\chi^{i}(nx)P(nx)\nabla u^{H}| +  m^{i}_{n}(x)\\
\label{uniformremainder}
&=&{\mathcal M}^{i}(\nabla u^{H})(x)+ m^{i}_{n}(x)
\label{modanduniform}
\end{eqnarray}
where $m_{n}^{i}(x) \to 0$ uniformly in $\Omega'$. Hence we arrive at the local representation formula for layered microstuctures
given by
\begin{theorem}
\label{layerlocal}
\begin{eqnarray}
\label{laminateconverge}
\lim_{n\to \infty}\|\chi^{i}(nx)\nabla u_n \|_{L^{\infty}(\Omega')} &=
& \| {\mathcal M}^{i}(\nabla u^{H})\|_{L^{\infty} (\Omega')}.
\end{eqnarray}
\end{theorem}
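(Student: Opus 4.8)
The plan is to read off the result from the uniform convergence already established in the preceding discussion, together with a short equidistribution argument for the $L^\infty$ norm. As recalled in \eqref{uni}, the estimate \eqref{Chipotestimate} of \cite{Chipot} and the compact Sobolev embedding make $\{\partial_{x_2}u_n\}$ and $\{a(nx)\partial_{x_1}u_n\}$ precompact in $C(\overline{\Omega'})$; any uniform limit along a subsequence must coincide with the corresponding weak $L^2$ limit, i.e.\ $\partial_{x_2}u^H$ (from $u_n\rightharpoonup u^H$ in $H^1$) and the first flux component $a_h\partial_{x_1}u^H$ (from $A^n\nabla u_n\rightharpoonup A^H\nabla u^H$ and $A^H_{11}=a_h$), so the full sequences converge uniformly on $\Omega'$. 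In particular $\nabla u^H\in C(\overline{\Omega'})$, hence $\mathcal M^i(\nabla u^H)$ is continuous on $\overline{\Omega'}$ and $\|\mathcal M^i(\nabla u^H)\|_{L^\infty(\Omega')}$ is an attained maximum.

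The first step is to control the remainder uniformly. By \eqref{identuniform}, $\alpha\,|\partial_{x_1}u_n-p_{11}(nx)\partial_{x_1}u^H|\le|a(nx)\partial_{x_1}u_n-a_h\partial_{x_1}u^H|\to 0$ uniformly on $\Omega'$, so combining with the uniform convergence $\partial_{x_2}u_n\to\partial_{x_2}u^H$ and the reverse triangle inequality in $\mathbb R^2$, the quantity $m_n^i(x):=|\chi^i(nx)\nabla u_n(x)|-|\chi^i(nx)P(nx)\nabla u^H(x)|$ (which is exactly the remainder in \eqref{modanduniform} restricted to the support of $\chi^i(nx)$, and has absolute value bounded by the phase-free difference elsewhere since $\chi^i(nx)\in\{0,1\}$) satisfies $\|m_n^i\|_{L^\infty(\Omega')}\to 0$ for $i=1,2$. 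The reverse triangle inequality for the sup norm then gives
\[
\Bigl|\,\|\chi^i(nx)\nabla u_n\|_{L^\infty(\Omega')}-\|\chi^i(nx)P(nx)\nabla u^H\|_{L^\infty(\Omega')}\,\Bigr|\le\|m_n^i\|_{L^\infty(\Omega')}\to 0 .
\]

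The second step is to identify $\lim_n\|\chi^i(nx)P(nx)\nabla u^H\|_{L^\infty(\Omega')}$. Since $P$ is constant on each phase with value $P_i$ there, $\chi^i(nx)P(nx)\nabla u^H(x)=\chi^i(nx)P_i\nabla u^H(x)$ and $|P_i\nabla u^H(x)|=\mathcal M^i(\nabla u^H)(x)$, so this norm equals $\|\mathcal M^i(\nabla u^H)\|_{L^\infty(E_n^i)}$ with $E_n^i=\{x\in\Omega':\chi^i(nx)=1\}$. The bound $\le\|\mathcal M^i(\nabla u^H)\|_{L^\infty(\Omega')}$ is trivial since $E_n^i\subset\Omega'$. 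For the reverse, fix $c<\|\mathcal M^i(\nabla u^H)\|_{L^\infty(\Omega')}$; then $G_c=\{x\in\Omega':\mathcal M^i(\nabla u^H)(x)>c\}$ is open and nonempty, hence of positive measure, and since $\chi^i(n\cdot)\rightharpoonup|Y_i|$ weakly-$*$ in $L^\infty$ we have $|G_c\cap E_n^i|=\int_{G_c}\chi^i(nx)\,dx\to|Y_i|\,|G_c|>0$, so $\|\mathcal M^i(\nabla u^H)\|_{L^\infty(E_n^i)}\ge c$ for all large $n$; letting $c\uparrow\|\mathcal M^i(\nabla u^H)\|_{L^\infty(\Omega')}$ finishes the step. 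Combining the two steps yields \eqref{laminateconverge}. The only genuinely non-bookkeeping point is this second step: one must check that sampling the supremum of $\mathcal M^i(\nabla u^H)$ over the shrinking but asymptotically equidistributed strips $E_n^i$ recovers the supremum over all of $\Omega'$, which is precisely where the weak-$*$ convergence of the oscillating characteristic function and the superlevel-set argument enter; everything else is a direct consequence of the uniform convergence \eqref{uni}.
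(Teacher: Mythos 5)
Your proof is correct, and its first half is exactly the paper's route: the laminate regularity estimate \eqref{Chipotestimate} of Chipot--Kinderlehrer--Vergara-Caffarelli, Sobolev embedding, identification of the uniform limits with $\partial_{x_2}u^H$ and $a_h\partial_{x_1}u^H$ as in \eqref{uni}, and the pointwise comparison \eqref{identuniform} to get a uniformly vanishing remainder. Where you genuinely add something is the second step. The paper passes from the pointwise expansion \eqref{modanduniform} directly to the norm identity \eqref{laminateconverge}, but as written the display $|\chi^i(nx)P(nx)\nabla u^H(x)|={\mathcal M}^i(\nabla u^H)(x)$ only holds on the support of $\chi^i(n\cdot)$ (elsewhere the left side vanishes), so the passage to $L^\infty$ norms gives the inequality $\le$ for free but not $\ge$. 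Your superlevel-set argument --- continuity of ${\mathcal M}^i(\nabla u^H)$, openness and positive measure of $G_c$, and the weak-$*$ convergence $\chi^i(n\cdot)\rightharpoonup|Y_i|>0$ forcing $|G_c\cap E_n^i|>0$ for large $n$ --- is precisely what is needed to show that the essential supremum of ${\mathcal M}^i(\nabla u^H)$ sampled only over the phase-$i$ strips recovers the supremum over all of $\Omega'$. (The paper supplies the analogous lower bound in the periodic case of Section 4.2 by invoking Corollary 5.1, but in Section 4.1 it is left implicit.) Two minor remarks: your observation that the full sequence, not merely a subsequence, converges uniformly (precompactness plus uniqueness of the weak limit of the flux) is a small sharpening of the paper's statement; and the continuity of $\nabla u^H$ that your step two requires also follows directly from interior regularity for the constant-coefficient homogenized equation, independently of the uniform convergence.
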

It is easily seen that the  uniform convergence implies that sequence of the gradients$\{\nabla u_n\}$ satisfy the  non-concentrating conditions given by \eqref{nonzero2-1}. Indeed, setting
\\$L^{i} = \lim_{n\to \infty}\|\chi^{i}(nx)\nabla u_n\|_{L^{\infty}(\Omega')},$  and for any $\delta>0$ there exists sufficiently large $n$ for which $|m^{i}_{n}(x)|<\frac{\delta}{2}$ for $x\in\Omega'$ and
\[|\chi^i(nx)\nabla u_n(x)|>|{\mathcal M}^i\nabla u^H(x)|-\frac{\delta}{2}\]
so
\begin{eqnarray}
\{ x\in \Omega': {\mathcal M}^{i}(\nabla u^{H}(x))> L^{i} -\frac{\delta}{2}  \}&\subset
& \{x\in \Omega': |\chi^{i}(nx)\nabla u_n|> L^{i} -\delta \}.\nonumber
\end{eqnarray}
Therefore we conclude that for $L^i>\delta > 0$
\[
\liminf_{n\to \infty}|\{x\in \Omega':  |\chi_{i}(nx)\nabla u_n|> L^{i} -\frac{\delta}{2} \} |\geq |\{ x\in \Omega':{\mathcal M}^{i}(\nabla u^{H})(x) > L^{i} -\delta  \} | > 0.
\]
Last the non-concentrating condition \eqref{assum2} follows immediately from the piecewise constant nature of the corrector matrix $P(y)$ for layered materials.


\subsection{Periodic microstructure}
\label{42}
We consider  periodic microstructures associated with particle and fiber reinforced composites. As before we divide  $Y$ into a union of $N$ disjoint subdomains $Y_{1}\dots Y_{N}$. Instead of  proceeding within the general context developed in  \cite{LiNirenberg}, \cite{LiVogelius} we fix ideas we suppose that the domains
$Y_1,\dots,Y_{N-1}$ denote convex particles with smooth (i.e., $C^2$) boundaries embedded inside a connected phase described by the domain $Y_N$, see Figure \ref{Particles}. As before we denote the indicator function of $Y_i$ by $\chi^i$ and the $Y$ periodic coefficient is written $A(y)=\sum_{i=1}^N\chi^i(y)A_i$ with
each $A_i$ being a symmetric $d\times d$ matrix of constants satisfying the coercivity and boundedness conditions given by
\[
\lambda |\xi|^{2}\leq (A_i\xi, \xi)\leq \Lambda |\xi|^{2}\quad \forall \xi \in \mathbb{R}^{d},\hbox{ and $i=1,\ldots,N$}.
\]
For any bounded domain $\Omega \subset R^{d}$ we consider the $H^{1} (\Omega)$ solutions $u_{n}$ of
\begin{equation}\label{basicpde}
\text{div}(A(nx)\nabla u_{n} )= 0 \quad \text{in $\Omega$}
\end{equation}
associated with prescribed Neumann or Dirichlet boundary conditions. From the theory of periodic homogenization the solutions converge weakly in $H^{1}$ to the homogenized solution $u^{H}$.
In this section we establish the following local representation theorem.
\begin{theorem}\label{periodicidentity}
Let $A(y)$ and the subdomains $\{Y_{i}\}_{i=1}^N$ be as described above. Suppose $u_{n}$ solves \eqref{basicpde} and $u^{H}$ is the corresponding homogenized solution, then  for any subdomain $\Omega'$ compactly contained inside $\Omega$ one has the local representation formula given by
\begin{equation}
\lim_{n\to \infty} \|\chi^i(nx)\nabla u_{n}\|_{L^{\infty} (\Omega '))} = \|{\mathcal M}^i(\nabla u^H)\|_{L^{\infty}(\Omega ')}.
\label{periodicidentityy}
\end{equation}
\end{theorem}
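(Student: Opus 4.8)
The plan is to follow the route used in the laminate case, Theorem~\ref{layerlocal}: first upgrade the classical first--order two--scale expansion of $\nabla u_n$ to an $L^\infty$ statement, and then pass to the limit in the supremum norm using the fact that the dilated corrector $P(n\,\cdot)$ equidistributes over the period cell. For Step~1, the goal is to prove
\[
\lim_{n\to\infty}\bigl\|\nabla u_n-P(n\,\cdot)\nabla u^H\bigr\|_{L^\infty(\Omega')}=0 .
\]
Two regularity inputs make this possible. First, since $A^H$ is a constant matrix, $u^H$ solves a constant--coefficient uniformly elliptic equation in $\Omega$, hence $u^H\in C^\infty(\Omega)$ and $\nabla u^H,\nabla^2u^H$ are bounded on every $\Omega'\Subset\Omega$. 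Second, because each inclusion $Y_j$ is convex with $C^2$ boundary, the gradient estimates of \cite{LiVogelius} and \cite{LiNirenberg} apply both to the cell problems \eqref{cellproblemperiodic} --- giving $\phi^j\in W^{1,\infty}(\mathbb{R}^d)$, hence $P=I+\nabla\phi\in L^\infty(Y)$ with $P$ continuous up to each interface from inside every phase --- and, after rescaling by $n$, to \eqref{basicpde}, giving a uniform interior Lipschitz (indeed piecewise $C^{1,\alpha}$) bound $\sup_n\|\nabla u_n\|_{L^\infty(\Omega'')}<\infty$ for $\Omega'\Subset\Omega''\Subset\Omega$. Granting these, the quantitative first--order homogenization estimate of Avellaneda--Lin type holds, $\|u_n-u^H-\tfrac1n\phi(n\,\cdot)\cdot\nabla u^H\|_{W^{1,\infty}(\Omega')}=O(1/n)$: in the smooth--coefficient case this is classical, and here one runs the same compactness/iteration argument with the \cite{LiVogelius}--\cite{LiNirenberg} gradient estimates in place of Schauder estimates, which is licit precisely because the interfaces are $C^2$ and $u^H$ is smooth. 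Differentiating the two--scale ansatz and using $\|\phi\|_{L^\infty(Y)},\|\nabla^2u^H\|_{L^\infty(\Omega')}<\infty$ gives $\nabla u_n=P(n\,\cdot)\nabla u^H+O(1/n)$ in $L^\infty(\Omega')$, as claimed.

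For Step~2 (equidistribution), using that $\nabla u^H$ is continuous on $\overline{\Omega'}$, that $v\mapsto\|\chi^i(\cdot)P(\cdot)v\|_{L^\infty(Y)}$ is a continuous (sublinear) function of $v$, and that $y\mapsto|P(y)v|$ is continuous and bounded on the interior of each $Y_j$, I would show
\[
\lim_{n\to\infty}\bigl\|\chi^i(n\,\cdot)P(n\,\cdot)\nabla u^H\bigr\|_{L^\infty(\Omega')}=\bigl\|{\mathcal M}^i(\nabla u^H)\bigr\|_{L^\infty(\Omega')}.
\]
The bound ``$\le$'' holds for every $n$: almost every $x$ lies in the interior of one dilated phase, and there $\chi^i(nx)|P(nx)\nabla u^H(x)|\le\sup_{y\in\mathrm{int}\,Y_i}|P(y)\nabla u^H(x)|=\|\chi^i(\cdot)P(\cdot)\nabla u^H(x)\|_{L^\infty(Y)}={\mathcal M}^i(\nabla u^H)(x)$. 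For ``$\ge$'', fix $\varepsilon>0$, choose $(y_0,x_0)\in\mathrm{int}\,Y_i\times\Omega'$ with $|P(y_0)\nabla u^H(x_0)|>\|{\mathcal M}^i(\nabla u^H)\|_{L^\infty(\Omega')}-\varepsilon$ (possible by the continuity statements above), take a product neighbourhood $U\times V\subset\mathrm{int}\,Y_i\times\Omega'$ of $(y_0,x_0)$ on which the strict inequality persists, and observe that for $n$ large the set $\{x\in V:\ nx\in U+\mathbb{Z}^d\}$ has positive measure; any such $x$ witnesses the lower bound, and $\varepsilon$ is arbitrary.

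Step~3 combines the two. From Step~1, $\bigl|\,|\chi^i(nx)\nabla u_n(x)|-|\chi^i(nx)P(nx)\nabla u^H(x)|\,\bigr|\le|\nabla u_n(x)-P(nx)\nabla u^H(x)|\to0$ uniformly on $\Omega'$, so $\|\chi^i(n\,\cdot)\nabla u_n\|_{L^\infty(\Omega')}=\|\chi^i(n\,\cdot)P(n\,\cdot)\nabla u^H\|_{L^\infty(\Omega')}+o(1)$, and Step~2 yields \eqref{periodicidentityy}. (The same uniform expansion, together with the fact that the superlevel sets of the continuous map $(x,y)\mapsto\chi^i(y)|P(y)\nabla u^H(x)|$ near its maximum over $\overline{\Omega'}\times Y$ are open and nonempty, hence of positive measure --- and equidistribution transfers this to the corresponding sets for $\chi^i_n|\nabla u_n|$ --- shows the non--concentration hypotheses \eqref{nonzero2-1}--\eqref{assum2} of Theorem~\ref{Equality} hold, so the identity is consistent with the general framework of Section~3; the matching lower bound may alternatively be obtained from the results of Section~5, cf.\ \cite{lipsima}.) The one genuine obstacle is Step~1: passing from the $L^2$ first--order corrector estimate to its $L^\infty$ form requires the uniform--in--$n$ interior regularity and an Avellaneda--Lin type corrector bound for a coefficient that is merely piecewise constant --- available here only because the inclusion boundaries are $C^2$, via \cite{LiVogelius} and \cite{LiNirenberg} --- and one must keep track of the fact that $\nabla u_n$, $\phi(n\,\cdot)$ and $P(n\,\cdot)$ each jump across the dilated interfaces while the combinations entering the expansion remain continuous there.
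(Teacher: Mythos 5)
Your proposal is correct in outline and, for its central step, rests on the same two pillars as the paper's proof: smoothness of $u^H$ (constant homogenized coefficients) and the uniform interior gradient estimates of \cite{LiVogelius}, \cite{LiNirenberg} for both the cell problem and the rescaled equation. The differences are worth noting. For Step~1 you assert a full Avellaneda--Lin corrector bound $\|u_n-u^H-\tfrac1n\phi(n\cdot)\cdot\nabla u^H\|_{W^{1,\infty}(\Omega')}=O(1/n)$ and leave its extension to piecewise-constant coefficients as a claim; the paper instead derives the needed $o(1)$ statement directly and more cheaply, by applying the interior Lipschitz estimate (Lemma~\ref{w1inftyestimate}) on cubes $Y(x_0,r)$ containing an integral number of periods to the difference $u_n-(w_n+\nabla u^H(x_0)\cdot(x-x_0))$, which is $A(n\cdot)$-harmonic there, and then feeding in only the Taylor expansion of $u^H$ and the classical $L^2$ rate $\|u_n-u^H\|_{L^2}\le C n^{-1/2}$. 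This yields $\|\nabla u_n-P(n\cdot)\nabla u^H\|_{L^\infty(Y(x_0,r/2))}\le C(Mr+(nr^{d+2})^{-1/2})$, and one balances $r_k$ against $n_k$ along a subsequence (then upgrades to the full sequence by the usual sub-subsequence argument); you may want to adopt this route, since it avoids having to reprove the compactness/iteration scheme of \cite{AvellanedaLin} in the discontinuous-coefficient setting, and the $O(1/n)$ rate you claim is stronger than what the theorem requires. For Step~2 your equidistribution argument for the lower bound (near-maximizing pair $(y_0,x_0)$, product neighbourhood, positive measure of $\{x\in V: nx\in U+\mathbb{Z}^d\}$) is genuinely different from the paper, which obtains the inequality $\|{\mathcal M}^i(\nabla u^H)\|_{L^\infty(\Omega')}\le\liminf_n\|\chi^i(n\cdot)\nabla u_n\|_{L^\infty(\Omega')}$ by invoking Corollary~\ref{inftyonesidedinequality}, i.e.\ the weak-convergence/coefficient-perturbation machinery of Section~5; your argument is more elementary and self-contained, needing only interior continuity of $P$ in each phase and continuity of $\nabla u^H$, and it buys independence from Theorem~\ref{representationThm}. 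The upper bound in Step~2 and the combination in Step~3 coincide with the paper's.
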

For the proof we will use the $W^{1,\infty}$ estimate for weak solutions of linear equations with oscillatory periodic coefficients  obtained in \cite{AvellanedaLin} for smooth coefficients and later extended in \cite{LiNirenberg} to include discontinuous but locally H\"older coefficients. A $W^{1,p}$ estimate for $p<\infty$ is given in \cite{Caffarelliperal}. We point out that we have restricted the discussion to periodic homogenization for  particle reinforced configurations of the kind illustrated in Figure \ref{Particles}.  However the regularity theory for oscillatory periodic coefficients developed in  \cite{LiNirenberg} applies to more general types of domains $Y_1,\ldots,Y_N$ with $C^{1,\alpha}$ boundaries. We note that the proof given here goes through verbatim for period cells with coefficients satisfying the general hypotheses described in  \cite{LiNirenberg}.

\begin{figure}[tbp]
\centerline{\scalebox{0.3}{\includegraphics{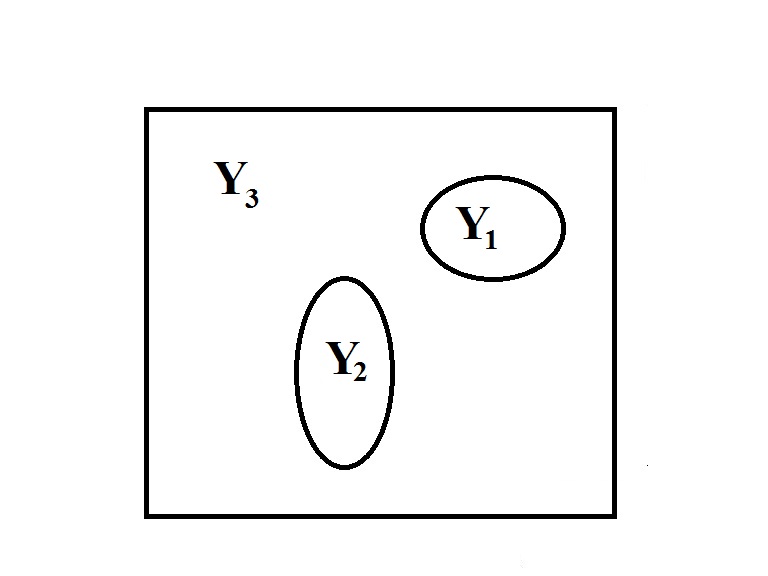}}}
\caption{Particle reinforced geometry for two inclusions $Y_1$ and $Y_2$.}
\label{Particles}
\end{figure}
Theorem 1.9 of \cite{LiNirenberg} and a suitable rescaling shows that for any $r>0$ and $ Y(x_{0}, r)\subset \Omega$ that there exists a positive constant $C$ independent of $r$ and $n$ for which
\begin{equation}
\|\nabla u_{n}\|_{L^{\infty}(Y(x_{0},r/2))}\leq Cr^{-1}\|u_{n}\|_{L^{\infty}(Y(x_{0}, r))}.
\label{linftyest}
\end{equation}
The local $L^{\infty}$ estimate for weak solutions of elliptic linear problems (Theorem 8.17, \cite{Gilbarg-Trudinger}) gives
\begin{equation}
\|u_{n}\|_{L^{\infty}(Y(x_{0}, r))} \leq C r^{-d/2} \|u_{n}\|_{L^{2}(Y(x_{0}, 2r))},
\label{linfltwo}
\end{equation}
where the constant $C$ is independent of $n$ and $r$.
Combining the  two estimates delivers the following lemma.
\begin{lemma}\label{w1inftyestimate}
Let $A(y)$ and the subdomains $\{Y_{i}\}$ be as described above. Choose $r\in (0, 1)$ such that $Y(x_{0}, 2r)\subset \Omega$.
Then if  $u_{n}$ solves \eqref{basicpde}, then there exists $C$, independent of $n$ and $r$ such that
\begin{equation}
\|\nabla u_{n}\|_{L^{\infty}(Y(x_{0},r/2))}\leq Cr^{\frac{-(d+2)}{2}}\|u_{n}\|_{L^{2}(Y(x_{0}, 2r))}.
\label{lemma2}
\end{equation}
\end{lemma}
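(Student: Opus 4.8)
The plan is to prove Lemma \ref{w1inftyestimate} simply by composing the two estimates \eqref{linftyest} and \eqref{linfltwo} that have already been recorded. First I would fix $x_0$ and $r\in(0,1)$ with $Y(x_0,2r)\subset\Omega$, and observe that since $Y(x_0,2r)\subset\Omega$ we in particular have $Y(x_0,r)\subset\Omega$, so \eqref{linftyest} applies on $Y(x_0,r)$ and gives $\|\nabla u_n\|_{L^\infty(Y(x_0,r/2))}\le C r^{-1}\|u_n\|_{L^\infty(Y(x_0,r))}$ with $C$ independent of $n$ and $r$ (this is exactly Theorem 1.9 of \cite{LiNirenberg} after rescaling $y\mapsto x_0+r y$, which turns the period $1/n$ structure into a $1/(nr)$ periodic one and leaves the constant untouched because the estimate in \cite{LiNirenberg} is uniform over all such periods). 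Then \eqref{linfltwo}, which is the De Giorgi--Nash--Moser local boundedness estimate (Theorem 8.17 of \cite{Gilbarg-Trudinger}) applied on the pair of concentric cubes $Y(x_0,r)\subset Y(x_0,2r)$, gives $\|u_n\|_{L^\infty(Y(x_0,r))}\le C r^{-d/2}\|u_n\|_{L^2(Y(x_0,2r))}$, again with $C$ independent of $n$ and $r$ by scaling.

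Next I would chain the two displays:
\begin{eqnarray*}
\|\nabla u_n\|_{L^\infty(Y(x_0,r/2))}
&\le& C r^{-1}\|u_n\|_{L^\infty(Y(x_0,r))}\\
&\le& C r^{-1}\cdot C r^{-d/2}\|u_n\|_{L^2(Y(x_0,2r))}\\
&=& C' r^{-\frac{d+2}{2}}\|u_n\|_{L^2(Y(x_0,2r))},
\end{eqnarray*}
where $C'$ depends only on the constants in \eqref{linftyest} and \eqref{linfltwo}, hence only on $\lambda,\Lambda,d$ and the geometry of the $Y_i$, and is in particular independent of $n$ and $r$. Renaming $C'$ as $C$ yields \eqref{lemma2} and completes the proof.

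The only point that needs a word of care is the claim that both constants can be taken uniform in $r$ and $n$ simultaneously. For \eqref{linfltwo} this is the standard scaling of the Moser estimate: the equation $\mathrm{div}(A(nx)\nabla u_n)=0$ is invariant in form under $x = x_0 + r\tilde x$ (the coefficient becomes $A(nx_0+nr\tilde x)$, still symmetric with the same ellipticity bounds $\lambda,\Lambda$), so the estimate on the unit cube transfers with the indicated powers of $r$ and no $n$-dependence. For \eqref{linftyest} the uniformity in $n$ is precisely the content of the Avellaneda--Lin / Li--Nirenberg theory: the Lipschitz bound for solutions of $\mathrm{div}(A(\cdot/\varepsilon)\nabla u)=0$ holds with a constant independent of the oscillation parameter $\varepsilon=1/n$, and after the rescaling by $r$ the oscillation parameter becomes $1/(nr)$, which is still an arbitrary positive number, so the same uniform constant applies. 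I do not anticipate a genuine obstacle here; the lemma is a bookkeeping step whose substance lives entirely in the two cited estimates, and it is being isolated because it will be invoked repeatedly in the proof of Theorem \ref{periodicidentity}.
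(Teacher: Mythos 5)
Your proof is correct and is exactly the paper's argument: the lemma is obtained by chaining the Li--Nirenberg gradient estimate \eqref{linftyest} with the local $L^\infty$--$L^2$ bound \eqref{linfltwo}, and the exponent $r^{-1}\cdot r^{-d/2}=r^{-(d+2)/2}$ comes out as you computed. Your added remarks on the uniformity of the constants in $n$ and $r$ under rescaling are accurate and merely make explicit what the paper leaves implicit.
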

\begin{proof}[Proof of Theorem \ref{periodicidentity}]
To prove the theorem we first show that there is a subsequence, $n_k$, for which
\begin{eqnarray}
\lim_{k\rightarrow \infty}\Vert\nabla u_{n_k}(x)-P(n_k x)\nabla u^H(x)\Vert_{L^\infty(\Omega')}.
\label{seqrep}
\end{eqnarray}
To begin we choose
$x_{0}\in \Omega$ and  $r>0$ such that $rn$ is an integer and $Y(x_{0}, r) \subset \Omega$ contains an integral number of periods of diameter $1/n$. Then from \eqref{cellproblemperiodic} we see that $ (1/n) \phi^{j}(n)$ is a $Y(x_{0}, r)$-periodic $H^{1}_{loc}$ function satisfying
\begin{equation}\label{basiccellpde}
\text{div}(A(nx)(\nabla (\frac{1}{n} \phi^{j}(nx))  + e^{j}))= 0 \quad \text{in $\mathbb{R}^d$}
\end{equation}
Combining equations (\ref{basicpde}) and (\ref{basiccellpde}) we note that
\begin{equation}\label{combinedpde}
\text{div}(A(nx)[\nabla u_{n}-(\nabla w_{n}(x, x_{0}) + \nabla u^{H}(x_{0}))] )= 0 \quad \text{in $Y(x_{0}, r)$}
\end{equation}
where
\[
w_{n}(x,x_{0}) = \sum^{d}_{j} \frac{1}{n} \phi^{j}(nx))  \partial_{x_{j}} u^{H}(x_{0}) +  u^{H}(x_{0}).
\]
Observe that for this choice of $Y(x_0,r)$
\[
\nabla w_{n}(x, x_{0}) + \nabla u^{H}(x_{0}) =P(n x) \nabla u^{H}(x_{0})
\]
Adding and subtracting $P(n x) \nabla u^{H}(x_0) $ delivers
\begin{eqnarray}\label{beginestimate}
&&\|\nabla u_{n}(x)-P(nx) \nabla u^{H}(x)\|_{L^{\infty} (Y(x_{0}, r/2))) }
\leq \|\nabla u_{n}(x)-P(nx) \nabla u^{H}(x_0)\|_{L^{\infty} (Y(x_{0}, r/2))) }\nonumber\\
&&+ \|P(nx) \nabla u^{H}(x)-  P(nx) \nabla u^{H}(x_{0})\|_{L^{2}(Y(x_{0}, r/2)))}.
\end{eqnarray}
We apply Lemma \ref{w1inftyestimate} to find  a constant $C$ independent of $n$ and $r$ such that the following estimate holds true:
\[
\begin{split}
\|\nabla u_{n}-&P(nx) \nabla u^{H}(x_{0}) \|_{L^{\infty} (Y(x_{0}, r/2))) }\\
&\leq \frac{C}{r^{-(d+2)/2}}\|u_{n} - (w_{n} + \nabla u^{H}(x_{0}) \cdot(x-x_{0}))\|_{L^{2}(Y(x_{0}, r)))}
\end{split}
\]
Combining with \eqref{beginestimate} we obtain
\begin{equation}\label{basicestimate}
\begin{split}
\|\nabla u_{n}-&P(nx) \nabla u^{H}(x)\|_{L^{\infty} (Y(x_{0}, r/2))) }\\
&\leq \frac{C}{r^{-(d+2)/2}}\|u_{n} - (w_{n} + \nabla u^{H}(x_{0}) \cdot(x-x_{0})) \|_{L^{2}(Y(x_{0}, r)))} \\
&+ \|P(nx) \nabla u^{H}(x)-  P(nx) \nabla u^{H}(x_{0})\|_{L^{2}(Y(x_{0}, r)))}.
\end{split}
\end{equation}
We bound the the first and second terms on the righthand side of \eqref{basicestimate}.
The  first term in the right hand side of \eqref{basicestimate} is bounded above by
\begin{equation}\label{basicestimate2}
\begin{split}
\|u_{n} - (w_{n} + \nabla &u^{H}(x_{0}) \cdot(x-x_{0}) )\|_{L^{2}(Y(x_{0}, r)))}\\
 &\leq \| u^{H}(x) - (u^{H} (x_{0}) + \nabla u^{H}(x_{0}) \cdot(x-x_{0}))\|_{L^{2}(Y(x_{0}, r))}\\
& +\| u_{n} - u^{H}\|_{L^{2} (Y(x_{0}, r))} + \|  \sum^{d}_{j} \frac{1}{n}\phi^{j}(n x)  \partial_{x_{j}} u^{H}(x_{0})\|_{L^{2}(Y(x_{0}, r))}\\
\end{split}
\end{equation}
 We apply Lemma \ref{w1inftyestimate} together with a priori elliptic estimates to find that
\begin{equation}\label{linftyboundofcorrector}
\|\nabla \phi^j(n\cdot) \|_{L^{\infty}(\mathbb{R}^{d})} \leq C,
\end{equation}
where $C$ is independent of $n$.
Moreover, as $u^H$ is a solution of a PDE in divergence form with constant coefficients it satisfies
\begin{equation}
\label{smoothnessofuh}
\begin{split}
|u^{H}(x) - (u^{H} (x_{0}) + \nabla u^{H}(x_{0}) \cdot(x-x_{0})) |&\leq M|x-x_{0}|^{2},\hbox{ $x\in\Omega'$}\\
| \nabla u^{H}(x) - \nabla u^{H}(x_{0})| &\leq M |x-x_{0}|, \hbox{ $x\in\Omega'$}
\end{split}
\end{equation}
where $M$ is the supremum of $|D^{2} u^{H}(x)|$ over $\Omega'$.
Applying \eqref{linftyboundofcorrector} and \eqref{smoothnessofuh} gives
\begin{equation}\label{basicestimate3}
\begin{split}
\|u_{n} - (w_{n} + \nabla &u^{H}(x_{0}) \cdot(x-x_{0}) )\|_{L^{2}(Y(x_{0}, r)))}\\
&\leq C\left(r^{2+d/2}M + \frac{1}{\sqrt{n}}+\| u_{n} - u^{H}\|_{L^{2} (Y(x_{0}, r))}\right).
\end{split}
\end{equation}
for some constant $C$ independent of $r$ and $n$.
From periodicity it follows  that
$$\Vert \nabla\phi^j(n x)\Vert_{L^\infty(Y(x_0,r))}=\Vert \nabla\phi^j(y)\Vert_{L^\infty(Y)}$$
and applying Lemma \ref{w1inftyestimate}  together with  \eqref{smoothnessofuh} delivers
\begin{equation}
\label{linftyboundofcorrector2}
\|P(nx) \nabla u^{H}(x)-  P(nx) \nabla u^{H}(x_{0})\|_{L^{2}(Y(x_{0}, r))}\leq C r^{(d+2)/2}
\end{equation}
From the theory of periodic homogenization see, \cite{BLP}, \cite{ZOK}, one has the convergence rate given by
\[
\| u_{n} - u^{H}\|_{L^{2} (Y(x_{0}, r))}\leq  C\frac{1}{\sqrt{n}}
\]
and collecting results we have
\begin{eqnarray}
\|\nabla u_{n}- &P(nx) \nabla u^{H}(x) \|_{L^{\infty} (Y(x_{0}, r/2))) }\leq C\left(Mr + \frac{1}{\sqrt{n r^{d+2}}}\right).
\label{rnk}
\end{eqnarray}
We pass to a subsequence $n_k$, and consider $Y(x_0,r_k)$  such that, $r_{k}\to 0$, $r_{k}n_{k}$ is an integer, and $r_{k}^{(d+2)/2}n_{k}^{1/2} \to \infty$ as $k\to \infty.$
Then
consider any subdomain $\Omega '\subset\subset \Omega $,  and cover it with cubes $\{ Y(x_{i}, r_{k}/2)\}_{x_{i}\in\Omega '  }$. Using compactness we  choose finitely many cubes so that
\[
\Omega '\subset \cup_{i=1}^{L} Y(x_{i}, r_{k}/2),
\]
Now since $\|\nabla u_{n_{k}}-P(n_{k}x) \nabla u^{H}(x) \|_{L^{\infty} (\Omega '))} $ is bounded above by the $L^{\infty}$ norms over a finite collection of cubes, we see that
\[
\|\nabla u_{n_{k}}-P(n_{k}x) \nabla u^{H}(x) \|_{L^{\infty} (\Omega '))}\leq  C\left(r_{k} + \frac{1}{\sqrt{n_k r_k^{d+2}}}\right)
\]
for sufficiently large $k$ to conclude
\begin{eqnarray}
\lim_{k\to \infty}\|\nabla u_{n_{k}}\|_{L^{\infty} (\Omega '))} = \lim_{k \to \infty }\|P(n_{k}x) \nabla u^{H}(x) \|_{L^{\infty} (\Omega '))},
\label{cuniformconvg}
\end{eqnarray}
so
\begin{eqnarray}
\lim_{k\to \infty}\|\chi^i(n_k x)\nabla u_{n_{k}}\|_{L^{\infty} (\Omega '))} = \lim_{k \to \infty }\|\chi^i(n_k x)P(n_{k}x) \nabla u^{H}(x) \|_{L^{\infty} (\Omega '))}.
\label{cuniformconvgi}
\end{eqnarray}
%
%
Now we bound  \eqref{cuniformconvgi} from above and below by $\Vert{\mathcal M}^i(\nabla u^H)\Vert_{L^\infty(\Omega')}$.
First note for each $n_k$ and $x\in\Omega'$ that
\begin{eqnarray}
\label{upbddd}
|\chi^i(n_k x)P(n_{k}x) \nabla u^{H}(x)|\leq\Vert\chi^i(\cdot)P(\cdot)\nabla u^{H}(x)\Vert_{L^\infty(Y)}
\end{eqnarray}
and we conclude that
\begin{eqnarray}
\lim_{k \to \infty }\|\chi^i(n_k x)P(n_{k}x) \nabla u^{H}(x) \|_{L^{\infty} (\Omega ')}\leq\Vert{\mathcal M}^i(\nabla u^H)\Vert_{L^\infty(\Omega')}.
\label{cuniformconvgiubddd}
\end{eqnarray}
The lower bound
\begin{eqnarray}
\Vert{\mathcal M}^i(\nabla u^H)\Vert_{L^\infty(\Omega')}\leq\lim_{k\to \infty}\|\chi^i(n_k x)\nabla u_{n_{k}}\|_{L^{\infty} (\Omega '))}
\label{cuniformconvgilbddd}
\end{eqnarray}
follows from a direct application of Corollary \ref{inftyonesidedinequality} proved the next section and we conclude that
\begin{eqnarray}
\lim_{k\to \infty}\|\chi^i(n_k x)\nabla u_{n_{k}}\|_{L^{\infty} (\Omega '))} =\Vert{\mathcal M}^i(\nabla u^H)\Vert_{L^\infty(\Omega')}.
\label{cuniformconvgie}
\end{eqnarray}
The theorem follows on noting that identical arguments can be applied to every subsequence of $\{\chi^i(nx)\nabla u_n\}_{n=1}^\infty$ to conclude the existence of a further subsequence with limit given by \eqref{cuniformconvgie}.

\end{proof}
\setcounter{theorem}{0}
\setcounter{definition}{0}
\setcounter{lemma}{0}
\setcounter{conjecture}{0}
\setcounter{corollary}{0}
\setcounter{equation}{0}
\section{Continuously graded microstructures}
In this section we consider a class of coefficient matrices associated with {\em continuously graded} composites made from  N distinct materials.
In order to express the continuous gradation of the microstructure we introduce the characteristic functions $\chi^{i}(x, y)$, $i = 1,\dots, N$ belonging to  $L^1(\Omega \times Y)$ such that for each $x$ the function $\chi^{i}(x, \cdot)$ is periodic and represents the characteristic function of the $i^{th} $ material inside the unit period cell $Y$.  The characteristic functions are taken to be continuous in the $x$ variable according to the following continuity condition given by
\begin{equation}\label{continuity}
\lim_{h \to 0}\int_Y |\chi^i(x+h,y)-\chi(x,y)|\,dy=0.
\end{equation} 
The coefficient associated with each material is denoted by $A_i$ and is a constant symmetric matrix satisfying the ellipticity condition
\[
\lambda \leq A_{i}\leq\Lambda
\]
for fixed positive numbers $\lambda < \Lambda$.
We define the coefficient matrix 
\[
A(x,y) = \sum_{i=1}^{N}A_{i}\chi^{i}(x,y).
\]
This type of coefficient matrix appears in prototypical problems where one seeks to design structural components made from  functionally graded materials \cite{markworth} and \cite{ootao}.  Here the configuration of the N materials is locally periodic but changes across the domain $\Omega$. The composite is constructed by dividing the domain $\Omega$  into subdomains $\Omega_{k,l}$, $l=1,\ldots,M_k$  of diameter less than or equal to $1/k$, $k=1,2,\ldots$ and $\Omega=\cup_{l=1}^{M_k}\Omega_{k,l}$. Each subdomain contains a periodic configuration of $N$ materials. 
The following lemma allows us to approximate the ideal continuously graded material by a piecewise periodic functionally graded material that can be manufactured.
\begin{lemma}
For a given subdivision $\Omega_{k,1},\ldots\Omega_{k,M_k}$ of diameter less than $1/k$ and any $i = 1,\cdots, N$,  there exists a sequence $\{\chi^i_k(x,y)\}_{k=1}^\infty$ of approximations to $\chi^i(x,y)$ given by
\begin{equation}
\chi^i_k(x,y)=\sum_{l}\chi_{\Omega_{k,l}}(x)\chi^i_{k,l}(y)
\label{graded}
\end{equation}
with the property that 
\begin{equation}
\lim_{k\to \infty}\int_{\Omega\times Y}|\chi^i_k(x,y)-\chi^i(x,y)|dydx=0. 
\label{approx}
\end{equation} 
In \eqref{graded}, $\chi_{\Omega_{k,l}}(x)$ denotes the characteristic function of $\Omega_{k,l}$ and $\chi^i_{k,l}(y) = \chi^{i}(x_{k,l},y)$ is the characteristic function associated with the configuration of the $i^{th}$ phase inside the subdomain $\Omega_{k,l}$ at a sample point $x_{k,l} \in \Omega_{k,l}$.
\label{cgraded}
\end{lemma}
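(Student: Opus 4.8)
The plan is to construct the approximating sequence $\{\chi^i_k\}$ explicitly as in \eqref{graded} and verify \eqref{approx} by splitting the error integral over the subdomains $\Omega_{k,l}$ and controlling each piece with the continuity hypothesis \eqref{continuity}. First I would fix $i$ and, for each $k$, for each subdomain $\Omega_{k,l}$ of the prescribed subdivision, choose a sample point $x_{k,l}\in\Omega_{k,l}$ and set $\chi^i_{k,l}(y)=\chi^i(x_{k,l},y)$; then define $\chi^i_k(x,y)=\sum_l \chi_{\Omega_{k,l}}(x)\chi^i_{k,l}(y)$, which is visibly of the stated form and, for each fixed $x$ (lying in a unique $\Omega_{k,l}$ up to a null set of overlaps/boundaries), is the characteristic function of a periodic configuration in $Y$.

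Next I would estimate
\[
\int_{\Omega\times Y}|\chi^i_k(x,y)-\chi^i(x,y)|\,dy\,dx=\sum_{l=1}^{M_k}\int_{\Omega_{k,l}}\int_Y|\chi^i(x_{k,l},y)-\chi^i(x,y)|\,dy\,dx.
\]
For $x\in\Omega_{k,l}$ we have $|x-x_{k,l}|\le 1/k$, so writing $h=x-x_{k,l}$ the inner integral is exactly the quantity appearing in \eqref{continuity} with increment $h$ of length at most $1/k$. The key step is to upgrade the pointwise-in-$x$ continuity \eqref{continuity} to a uniform modulus of continuity: define $\omega(\rho)=\sup_{|h|\le\rho,\,x\in\Omega}\int_Y|\chi^i(x+h,y)-\chi^i(x,y)|\,dy$ (interpreting $\chi^i$ as extended or restricted suitably near $\partial\Omega$), and argue that $\omega(\rho)\to 0$ as $\rho\to 0$. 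Granting this, each inner integral is bounded by $\omega(1/k)$, hence the total error is at most $\sum_l |\Omega_{k,l}|\,\omega(1/k)=|\Omega|\,\omega(1/k)\to 0$, which is \eqref{approx}.

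The main obstacle is precisely the passage from the pointwise statement \eqref{continuity} to the uniform modulus $\omega(1/k)\to 0$: a priori \eqref{continuity} only gives, for each fixed $x$, a rate that may depend on $x$. The cleanest route is to observe that the map $x\mapsto \chi^i(x,\cdot)\in L^1(Y)$ is continuous by \eqref{continuity}, and if $\Omega$ (or a slightly enlarged set on which $\chi^i$ is defined) is compact, a continuous map from a compact metric space into a metric space is automatically uniformly continuous; this directly yields $\omega(\rho)\to 0$. If one prefers not to invoke compactness of the closure, an alternative is a covering/Lebesgue-number argument: cover $\Omega$ by finitely many balls on each of which the oscillation is $<\varepsilon$, take $k$ large enough that $1/k$ is below the Lebesgue number, and conclude. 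Either way the remaining steps — measurability of $\chi^i_k$, the near-disjointness of the $\Omega_{k,l}$ so boundary overlaps contribute nothing, and the additivity of the integral over the subdivision — are routine, so I would state them briefly and spend the bulk of the argument on the uniform continuity point.
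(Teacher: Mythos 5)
Your construction and the reduction to controlling $\int_Y|\chi^i(x_{k,l},y)-\chi^i(x,y)|\,dy$ for $x\in\Omega_{k,l}$ are exactly right, and you have correctly located the crux of the matter. But the resolution you propose --- upgrading \eqref{continuity} to a uniform modulus $\omega(\rho)\to 0$ --- is the weak point. The hypothesis \eqref{continuity} is a pointwise-in-$x$ statement: it says the map $x\mapsto\chi^i(x,\cdot)$ is continuous from the open set $\Omega$ into $L^1(Y)$, and nothing more. A continuous map on a bounded open set need not be uniformly continuous; its modulus of continuity can degenerate near $\partial\Omega$, and \eqref{continuity} does not exclude that. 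Your two proposed fixes both quietly assume what is missing: invoking compactness requires continuity on $\overline{\Omega}$ (or on an enlarged set), which is not part of the hypothesis, and the Lebesgue-number covering argument likewise needs a finite subcover of a compact set on which the oscillation is controlled. As written, the step ``each inner integral is bounded by $\omega(1/k)$ with $\omega(1/k)\to 0$'' does not follow from the stated assumptions.

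The paper avoids the issue entirely and more cheaply. Set $\Gamma^i_k(x)=\int_Y|\chi^i_k(x,y)-\chi^i(x,y)|\,dy$. For each \emph{fixed} $x$, the sample points $x_{k,l_k}$ of the cells containing $x$ satisfy $|x_{k,l_k}-x|<1/k$, so \eqref{continuity} applied at that single $x$ gives $\Gamma^i_k(x)\to 0$ pointwise. Since $0\le\Gamma^i_k(x)\le 2|Y|$ uniformly and $\Omega$ is bounded, the Lebesgue dominated convergence theorem yields $\int_\Omega\Gamma^i_k(x)\,dx\to 0$, which is \eqref{approx}. In short: you do not need a uniform rate, only pointwise convergence plus the trivial uniform bound. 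Replacing your uniform-continuity step with this dominated convergence argument closes the gap without any additional hypotheses on $\chi^i$ near the boundary.
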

\begin{proof}
The definition of the approximating function is given in \eqref{graded}. We verify that \eqref{approx} is satisfied. For each $x\in \Omega,$  define the sequence of functions 
\[
\Gamma^{i}_{k}(x)  = \int_{Y}|\chi^i_k(x,y)-\chi^i(x,y)|dy. 
\]
Then $\Gamma^{i}_{k}(x)\to 0$ for all $x\in \Omega.$  Indeed, for a fixed $x\in \Omega, $ there exists a sequence of subdomains $x\in \Omega_{k, l_{k}}$ and points $x_{k, l_{k}}\in \Omega_{k, l_{k}}$ such that by definition,
\[
\Gamma_{k}^{i} (x) = \int_{Y}|\chi^{i}(x_{k, l_{k}}, y) - \chi^{i}(x, y)|dy.
\]
It is evident that  $|x_{k, l_{k}} - x| < 1/k$ since $x_{k, l_{k}}$ and $x$  both belong to $\Omega_{k, l_{k}}.$ Applying the continuity condition \eqref{continuity}, we see that $\Gamma_{k}^{i} (x) \to 0$ as $k\to \infty$ and \eqref{approx} follows from the Lebesgue dominated convergence theorem. 
\end{proof}

Let us define the coefficient matrix of the functionally graded material. Divide the domain $\Omega$  into subdomains $\Omega_{k,l}$, $l=1,\ldots,M_k$  of diameter less than or equal to $1/k$, $k=1,2,\ldots$ and $\Omega=\cup_{l=1}^{M_k}\Omega_{k,l}$. Each subdomain contains a periodic configuration of $N$ materials with period $1/n$ such that  $1/k>1/n$. The configuration of the $i^{th}$ phase inside a {\em functionally graded} composite is described by $\chi^i_k(x,nx)$, where $\chi_{k}^{i}(x,y)$ is given by \ref{graded}. The corresponding coefficient matrix is denoted by $A^k(x,nx)$ and is written as
\begin{equation}
A^k(x, nx) = \sum_{i}^{M_k}\chi^i_{k}(x, nx)A_{i}.
\label{coeffk}
\end{equation}
As seen from the proof of the lemma the continuity condition \eqref{continuity} insures that near by subdomains $\Omega_{k,l}$ and $\Omega_{k,l'}$ have configurations that are nearly the same
when $1/k$ is sufficiently small.
The fine-scale limit of such composites is obtained by considering a family of partitions indexed by $j=1,2, \dots, $ with subdomains $\Omega_{l}^{k_{j}}$ of diameter less that or equal to $1/k_{j}$. The scale of the microstructure is given by $1/n_{j}$. Both $1/k_{j}$ and $ 1/n_{j} $ approach zero as $j$ goes to infinity and  we require that $\lim_{j \to \infty}\frac{1/n_{j}}{1/k_{j}}=0$. For future reference the associated indicator functions and coefficients are written
\begin{equation}
  \chi^i_{k_j}(x, n_j x)~~\text{and} ~~A^{k_j}(x,n_j x).
\label{charcoeff}
\end{equation}
Let
\begin{equation}\label{effective}
A^{H}(x) = \int_{Y}A(x, y)P(x, y)dy
\end{equation}
where the matrix $P(x,y)$ is defined by
\begin{equation}
P(x, y)_{i, j} = \frac{\partial w^{j}}{\partial y_{i}} + \delta_{i j},
\label{pcontinuous}
\end{equation}
and $w^{i}(x, \cdot)$ is a $Y$ periodic function that solves the PDE
\begin{equation}\label{cellgraded}
\text{div}_{y}(A(x, y)(\nabla_{y}w^{i}(x, y) + e^{i}) )= 0,
\end{equation}
where  $\{e^{i}\}$, $i=1,\ldots$ is an orthonormal basis for $\mathbb{R}^{d}$.

The Sobolev space of  square integrable functions with square integrable derivatives periodic on $Y$ is denoted by
 $H_{\rm{per}}^{1}(Y)$. The functions $w^i(x,y)$ belong to $C(\Omega;H^{1}_{\rm{per}}(Y))$ this follows from \eqref{continuity} and is proved in  the Appendix.

We present the homogenization theorem for the sequences $A^{k_j}(x,n_j x)$ proved in \cite{liproy}.

\begin{lemma}\label{G-convergence}
One can construct sequences  $\{\chi^i_{k_j}(x,xn_j)\}_{j=1}^\infty$ for which the coefficient matrices $\{ A^{k_{j}}(x, n_{j}x)\}_{j=1}^{\infty}$  $G-$ converge to the effective  tensor $A^{H}(x)$ defined by (\ref{effective}).
\end{lemma}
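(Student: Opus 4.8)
The plan is to establish the $G$-convergence in two stages: first freeze the subdivision and send the microscale $1/n\to0$ to obtain a piecewise constant homogenized coefficient, then refine the subdivision and extract a diagonal sequence, relying on the sequential compactness and metrizability of $G$-convergence and on the continuity of the homogenized tensor in the grading variable. For the first stage fix $k$ and set $A^H_k(x)=\sum_l\chi_{\Omega_{k,l}}(x)\,A^H(x_{k,l})$, the coefficient obtained by freezing the grading variable at the sample point of each subdomain. On the interior $U_l$ of $\Omega_{k,l}$ the coefficient $A^k(x,nx)$ coincides with the $Y$-periodic rescaled function $A(x_{k,l},nx)$, so classical periodic homogenization gives that $A(x_{k,l},n\cdot)$ $G$-converges on $U_l$ to the constant matrix obtained by evaluating \eqref{effective} at $x=x_{k,l}$. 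By the $G$-compactness theorem (Theorem \ref{compactness}, applicable since each $A^k(\cdot,n\cdot)$ is a simple matrix valued function) some subsequence of $\{A^k(\cdot,n\cdot)\}_n$ $G$-converges on $\Omega$ to a limit $\Xi_k$, and the locality of $G$-limits --- a $G$-limit is determined a.e.\ on every open subset by the coefficients there --- forces $\Xi_k=A^H(x_{k,l})$ a.e.\ on each $U_l$. Since $\bigcup_l U_l$ exhausts $\Omega$ up to a set of measure zero and weak solutions are unaffected by null-set changes of the coefficient, $\Xi_k=A^H_k$ a.e.; because every $G$-convergent subsequence has this same limit, the full sequence satisfies $A^k(\cdot,n\cdot)\xrightarrow{\,G\,}A^H_k$ as $n\to\infty$.

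For the second stage I would show $A^H_k\to A^H$ as $k\to\infty$. The continuity condition \eqref{continuity} gives $\chi^i(x',\cdot)\to\chi^i(x,\cdot)$ in $L^1(Y)$ as $x'\to x$, hence $A(x',\cdot)\to A(x,\cdot)$ in $L^p(Y)$ for every finite $p$; together with the continuity of $x\mapsto w^j(x,\cdot)$ from $\Omega$ into $H^{1}_{\rm{per}}(Y)$ proved in the Appendix, this yields that $x\mapsto A^{H}(x)$ is continuous on $\Omega$. As every $x$ belongs to a subdomain of diameter at most $1/k$ that contains its sample point, $A^H_k\to A^H$ uniformly on compact subsets of $\Omega$, and a standard energy estimate (subtract the two boundary value problems and use ellipticity) shows that uniform convergence of uniformly elliptic coefficients forces $G$-convergence; hence $A^H_k\xrightarrow{\,G\,}A^H$. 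Now diagonalize: on the class of coefficients with eigenvalues in $[\lambda,\Lambda]$ over the bounded domain $\Omega$, $G$-convergence is metrizable by a metric $d$ and the associated space is compact, so using the two stages above I choose, for each $k$, an index $n_k$ so large that $d\big(A^k(\cdot,n_k\cdot),A^H_k\big)<1/k$ and, at the same time, $n_k>k^2$ and $r_k n_k\in\mathbb{Z}$ for the radii $r_k\to0$ used in the later local estimates. Relabelling this diagonal family as $\{\chi^i_{k_j}(x,n_jx)\}_j$ with coefficients $A^{k_j}(x,n_jx)$, the triangle inequality gives $d\big(A^{k_j}(\cdot,n_j\cdot),A^H\big)\le d\big(A^{k_j}(\cdot,n_j\cdot),A^H_{k_j}\big)+d\big(A^H_{k_j},A^H\big)\to0$, which is precisely $G$-convergence to $A^H$, while $\lim_j(1/n_j)/(1/k_j)=0$ holds by construction.

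The main obstacle is the first stage: making the patching of the piecewise-periodic coefficient rigorous, i.e.\ handling the subdomain skeleton and legitimately invoking the locality of $G$-limits. A more self-contained route avoids locality altogether by constructing two-scale oscillating test functions of the form $\psi^m_n(x)=x_m+\tfrac{1}{n}\sum_l\chi_{\Omega_{k,l}}(x)\,w^m(x_{k,l},nx)$ and verifying the Murat--Tartar conditions directly, namely $\psi^m_n\rightharpoonup x_m$ in $H^1_{loc}$, $A^k(\cdot,n\cdot)\nabla\psi^m_n\rightharpoonup A^H_k\,e^m$ in $L^2_{loc}$, and $\{{\rm div}(A^k(\cdot,n\cdot)\nabla\psi^m_n)\}_n$ precompact in $H^{-1}_{loc}$; in that approach the delicate point is the $H^{-1}$-smallness of the interfacial jump terms produced at $\partial\Omega_{k,l}$ by the discontinuity of $x\mapsto w^m(x_{k,l},\cdot)$ across subdomains, and it is exactly the scale-separation requirement $1/n_j\ll 1/k_j$ that makes these terms vanish in the fine-phase limit.
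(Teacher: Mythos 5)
The paper does not actually prove Lemma \ref{G-convergence}: it defers entirely to \cite{liproy} (``We present the homogenization theorem \dots proved in \cite{liproy}''). Your proposal therefore cannot match the paper's argument, but as a self-contained proof it is sound and follows the standard route for locally periodic homogenization. Your first stage (freeze $k$, send $n\to\infty$, and identify the $G$-limit as the piecewise constant tensor $A^H_k$ via periodic homogenization on each $U_l$ plus the locality and uniqueness of $G$-limits) is correct; the locality property you invoke is a genuine theorem of Murat--Tartar/Spagnolo and the subsequence-plus-uniqueness device legitimately upgrades to convergence of the full sequence in $n$. Your second stage is also correct, and in fact you can weaken it: you do not need uniform convergence $A^H_k\to A^H$ on compacta (which would require uniform continuity of $x\mapsto A^H(x)$ near $\partial\Omega$); pointwise a.e.\ convergence with the uniform ellipticity bounds already gives $\|(A^H_k-A^H)\nabla u\|_{L^2}\to 0$ by dominated convergence, and your energy estimate then yields strong $H^1$ convergence of the solutions, hence $G$-convergence. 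The diagonal extraction via metrizability of $G$-convergence on the class of symmetric coefficients with spectrum in $[\lambda,\Lambda]$ is standard (Spagnolo, Jikov--Kozlov--Oleinik), and your construction automatically delivers the scale separation $1/n_j\ll 1/k_j$ demanded by the paper. Your closing remark correctly identifies where the alternative oscillating-test-function proof is delicate (the interfacial terms on the skeleton $\bigcup_l\partial\Omega_{k,l}$); since your main argument never needs those test functions, that difficulty does not affect the proof you actually give. The one point worth making explicit is that the subdomain skeleton has Lebesgue measure zero, which you use tacitly when passing from the $U_l$ to all of $\Omega$.
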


Let  $f\in H^{-1}_{0}(\Omega)$ be given. Then by Lemma \ref{G-convergence}
the sequence of solutions$\{u_{j}\}$ of the equation:
\[
- \text{div}[A^{k_{j}}(x, n_{j}x)\nabla u_{j} (x)] = f, \quad u_{j} \in H^{1}_{0}(\Omega)
\]
converge  to $u^{H}$ weakly in $H^{1}_{0}$, where $u^{H}$ solves the the equation
\[
-\text{div} [A^{E}(x))\nabla u^{H} (x)] = f, \quad u^{H} \in H^{1}_{0}(\Omega).
\]
We now have the following result.
\begin{theorem}\label{representationThm} Let $V \subset L^{1}(Y)\cap L^{\infty}(Y)$  be a countable  dense subset of $L^{1}(Y)$. Assume that all elements of $V$ are periodically extended to $\mathbb{R}^{d}$.
Suppose that  $\phi(x)\in C(\overline{\Omega})$, $\eta(x) \in V$
 and $u_{j}$, $P$ and $u^{H}$ be given as above. Then
\[
\lim_{j\to \infty}\int_{\Omega}\phi(x)\eta( n_{j}x)\chi^{i}_{k_j}(x, n_{j}x)|\nabla u_{j}|^{2}dx = \int_{\Omega}\int_{Y}\phi(x)\eta(y)\chi^{i}(x,y)|P(x, y)\nabla u^{H}(x)|^{2}dydx.
\]
\end{theorem}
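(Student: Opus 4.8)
The plan is to localise the nonlinear quantity $|\nabla u_j|^2$ against the bounded oscillating weight $\phi(x)\,\eta(n_jx)\,\chi^i_{k_j}(x,n_jx)$ by means of a strong two-scale corrector estimate for the $G$-convergent sequence $A^{k_j}(x,n_jx)$, and then to evaluate the remaining purely quadratic oscillatory integral by a Riemann--Lebesgue / two-scale mean value argument that exploits the piecewise periodic structure of $\chi^i_{k_j}$, the continuity condition \eqref{continuity}, the regularity $w^i\in C(\Omega;H^1_{\rm per}(Y))$, and the scale separation $\lim_j (1/n_j)/(1/k_j)=0$. The starting point would be the strong corrector estimate
\[
\lim_{j\to\infty}\int_\Omega\bigl|\nabla u_j(x)-P(x,n_jx)\nabla u^H(x)\bigr|^2\,dx=0,
\]
the locally periodic analogue of Lemma~5.5 of \cite{casadodiazcalvogomez} quoted above, which follows from the two-scale homogenization of graded media developed in \cite{lipsima}, \cite{liproy} (its proof uses precisely Lemma~\ref{cgraded}, \eqref{continuity}, the regularity of the $w^i$, and the scale separation). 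Writing $\nabla u_j(x)=P(x,n_jx)\nabla u^H(x)+z_j(x)$ we then have $\|z_j\|_{L^2(\Omega)}\to0$, and since $\|\nabla u_j\|_{L^2(\Omega)}$ is bounded by the standard energy estimate, $\|P(\cdot,n_j\cdot)\nabla u^H\|_{L^2(\Omega)}\le\|\nabla u_j\|_{L^2(\Omega)}+\|z_j\|_{L^2(\Omega)}$ is bounded as well.

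Next I would expand $|\nabla u_j|^2=|P(x,n_jx)\nabla u^H(x)|^2+2\,P(x,n_jx)\nabla u^H(x)\cdot z_j(x)+|z_j(x)|^2$, multiply by the weight, whose modulus does not exceed $\|\phi\|_{L^\infty(\Omega)}\|\eta\|_{L^\infty(Y)}$, and integrate over $\Omega$: by Cauchy--Schwarz the cross term is $O\!\bigl(\|P(\cdot,n_j\cdot)\nabla u^H\|_{L^2}\,\|z_j\|_{L^2}\bigr)$ and the last term is $O\!\bigl(\|z_j\|_{L^2}^2\bigr)$, so both vanish as $j\to\infty$. It therefore suffices to prove
\[
\lim_{j\to\infty}\int_\Omega\phi(x)\,\eta(n_jx)\,\chi^i_{k_j}(x,n_jx)\,|P(x,n_jx)\nabla u^H(x)|^2\,dx
=\int_\Omega\!\int_Y\phi(x)\,\eta(y)\,\chi^i(x,y)\,|P(x,y)\nabla u^H(x)|^2\,dy\,dx .
\]

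For this last identity I would argue cell by cell. On each subdomain $\Omega^{k_j}_l$ the slow data $\chi^i_{k_j}(x,n_jx)=\chi^i(x^{k_j}_l,n_jx)$ are already frozen at the sample point $x^{k_j}_l$, while $\phi(x)$ and $P(x,n_jx)$ are frozen to $\phi(x^{k_j}_l)$ and $P(x^{k_j}_l,\cdot)$ with an error $o(1)$ by the uniform continuity of $\phi$ and the $L^2(Y)$-continuity of $P$ (since $|x-x^{k_j}_l|<1/k_j$), after which the generalised Riemann--Lebesgue lemma applies on each cell --- legitimate because $\Omega^{k_j}_l$ contains $\sim (n_j/k_j)^d\to\infty$ full periods and $\eta\in L^\infty_{\rm per}(Y)$ --- replacing the $Y$-periodic matrix $\eta\,\chi^i(x^{k_j}_l,\cdot)\,P(x^{k_j}_l,\cdot)^{\!\top}P(x^{k_j}_l,\cdot)$ evaluated at $n_jx$ by its mean, which by \eqref{continuity} and the $L^2(Y)$-continuity of $P$ tends to $\int_Y\eta(y)\chi^i(x,y)P(x,y)^{\!\top}P(x,y)\,dy$. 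Summing over $l$ yields the right-hand side. No passage to a subsequence is needed, since each ingredient holds for the full sequence; the countability and $L^1(Y)$-density of $V$ are invoked only later, to pass from $\eta\in V$ to an arbitrary $L^1(Y)$ weight.

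I expect the last step to be the main obstacle. The integrand is a product of \emph{three} oscillating factors while $\nabla u^H$ lies only in $L^2(\Omega)$ and the corrector $P(x,\cdot)$ only in $H^1_{\rm per}(Y)$, so the naive pairing of an $L^1$ function against a weakly convergent $L^1$ sequence is not licensed and the freezing-plus-Riemann--Lebesgue argument must be carried out with quantitative $L^1$ control. This is where one presumably uses a Meyers-type higher integrability $P\in L^{2+\varepsilon}_{\rm per}(Y)$ (so that $P^{\!\top}P(n_j\cdot)$ lies in $L^{1+\varepsilon/2}_{\rm per}$ and converges weakly there) together with a density reduction replacing $\nabla u^H$ by a continuous vector field, the error being controlled by this higher integrability, by the bound $x\mapsto\|P(x,\cdot)\|_{L^2(Y)}\in L^\infty(\overline\Omega)$ coming from $P\in C(\overline\Omega;L^2_{\rm per}(Y))$, and by the uniform $L^2$ bound from the corrector step; alternatively one simply invokes the two-scale convergence machinery for piecewise periodic oscillating data already established in \cite{lipsima}, \cite{liproy}, of which the present statement is essentially a specialisation carrying the extra bounded periodic factor $\eta$.
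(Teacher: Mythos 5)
Your route is genuinely different from the paper's, and as written it has two gaps that are not cosmetic. First, the opening corrector estimate $\lim_j\int_\Omega|\nabla u_j-P(x,n_jx)\nabla u^H(x)|^2dx=0$ is asserted as known, but at the regularity available here it is not: $P(x,\cdot)=I+\nabla_yw(x,\cdot)$ lies only in $L^2_{\rm per}(Y)$ for each $x$ and $\nabla u^H$ only in $L^2(\Omega)$, so the substituted field $P(x,n_jx)\nabla u^H(x)$ is a product of two square-integrable objects and need not belong to $L^2(\Omega)$ at all; this is precisely why the corrector lemma the paper actually invokes (Lemma 5.5 of \cite{casadodiazcalvogomez}, used in Section 3) is stated as an averaged double limit over $S\times Y$ with the separate variables $(x,y)$ rather than with $y=n_jx$. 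Your boundedness argument for $\|P(\cdot,n_j\cdot)\nabla u^H\|_{L^2}$ presupposes the very estimate in question. Second, even granting that step, the remaining purely quadratic term pairs the oscillating density $\eta(n_jx)\,|\chi^iP|^2(x,n_jx)$, which is only weakly convergent in $L^1$, against the $L^1$ function $|\nabla u^H|^2$; this is exactly the product of weak limits that fails in general, and you acknowledge it but resolve it only by gesturing at Meyers' exponent and a density reduction. Quantitatively, $|P|^2\in L^{1+\varepsilon/2}_{\rm per}(Y)$ would force $\nabla u^H\in L^{2(2+\varepsilon)/\varepsilon}(\Omega)$ up to the boundary, which is not among the hypotheses. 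So the proposal does not yet constitute a proof.

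The paper avoids all of this with a duality (perturbation-of-coefficients) argument that never touches strong corrector convergence: set $A_1(x,y)=A(x,y)+\beta\chi^i(x,y)\phi(x)\eta(y)I$, let $v_j$ solve the perturbed problem, and subtract weak formulations to obtain $\beta\int_\Omega\chi^i_{k_j}\phi\,\eta(n_jx)|\nabla u_j|^2dx+T^j=-\int_\Omega f\,\delta u_jdx$ with $|T^j|\leq C\beta^2$ from the energy estimate $\|\nabla\delta u_j\|_{L^2}\leq C\beta$. G-convergence gives $\delta u_j\rightharpoonup\delta u^H$, and the two-scale variational principles for the homogenized limits identify $-\int_\Omega f\,\delta u^Hdx$ as $\beta\int_\Omega\int_Y\chi^i\phi\,\eta|P\nabla u^H|^2dydx+O(\beta^2)$; equating and matching the order-$\beta$ terms yields the theorem. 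Only weak $H^1$ convergence and the variational structure are used, which is why the statement holds with $\nabla u^H$ merely in $L^2$ and $P(x,\cdot)$ merely in $H^1_{\rm per}(Y)$. If you want to salvage your approach, you would need to reformulate both of your steps in the averaged two-variable form (as in Section 3 and Theorem 3.1 of the paper), at which point you are essentially reproving the two-scale machinery the duality argument packages for free.
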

By taking the modulation functions for continuously graded composites (see, \cite{liproy}) to be 
\begin{eqnarray}
{\mathcal M}^i(\nabla u^H)(x)=\|\chi^{i}(x, \cdot)P(x,\cdot)\nabla u^{H}(x) \|_{L^\infty(Y)}.
\label{figraded}
\end{eqnarray}
we obtain the following corollary.
\begin{corollary}\label{inftyonesidedinequality}
\begin{eqnarray}
\|{\mathcal M}^i(\nabla u^H)\|_{L^{\infty}(\Omega)}\leq \limsup_{j\to \infty}\|\chi^{i}_{k_j}(x, n_j x)\nabla u_{j}\|_{L^{\infty}(\Omega)}.\label{lowergraded}
\end{eqnarray}
\end{corollary}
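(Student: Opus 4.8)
Write $\ell:=\limsup_{j\to\infty}\|\chi^i_{k_j}(x,n_jx)\nabla u_j\|_{L^\infty(\Omega)}$, so the assertion is $\|{\mathcal M}^i(\nabla u^H)\|_{L^\infty(\Omega)}\le\ell$. If $\ell=\infty$ there is nothing to prove, so I would assume $\ell<\infty$. Since ${\mathcal M}^i(\nabla u^H)(x)=\|\chi^i(x,\cdot)P(x,\cdot)\nabla u^H(x)\|_{L^\infty(Y)}$, the plan is to reduce everything to the pointwise bound
\[
\chi^i(x,y)\,|P(x,y)\nabla u^H(x)|^{2}\le\ell^{2}\qquad\text{for a.e. }(x,y)\in\Omega\times Y,
\]
because this immediately gives $\|\chi^i(x,\cdot)P(x,\cdot)\nabla u^H(x)\|_{L^\infty(Y)}\le\ell$ for a.e.\ $x$, hence $\|{\mathcal M}^i(\nabla u^H)\|_{L^\infty(\Omega)}\le\ell$.

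First I would extract the amplitude information hidden in the definition of $\ell$. For every $\delta>0$ there is $J$ with $\|\chi^i_{k_j}(x,n_jx)\nabla u_j\|_{L^\infty(\Omega)}<\ell+\delta$ for all $j\ge J$; as $\chi^i_{k_j}$ is $\{0,1\}$-valued, this means $\chi^i_{k_j}(x,n_jx)|\nabla u_j(x)|^{2}\le(\ell+\delta)^{2}$ for a.e.\ $x\in\Omega$ and $j\ge J$. Now fix $\phi\in C(\overline\Omega)$ with $\phi\ge0$ and $\eta\in V$ with $\eta\ge0$, multiply this inequality by $\phi(x)\eta(n_jx)\ge0$, integrate over $\Omega$, and drop $\chi^i_{k_j}\le1$ on the right to obtain, for $j\ge J$,
\[
\int_\Omega\phi(x)\eta(n_jx)\chi^i_{k_j}(x,n_jx)|\nabla u_j(x)|^{2}\,dx\le(\ell+\delta)^{2}\int_\Omega\phi(x)\eta(n_jx)\,dx .
\]
Passing to the limit $j\to\infty$, the left side tends to $\int_\Omega\int_Y\phi(x)\eta(y)\chi^i(x,y)|P(x,y)\nabla u^H(x)|^{2}\,dy\,dx$ by Theorem \ref{representationThm}, and the right side tends to $(\ell+\delta)^{2}\big(\int_\Omega\phi\big)\big(\int_Y\eta\big)$ by the classical weak-$*$ convergence of oscillating periodic test functions (see \cite{BLP}), valid since $|\Omega|<\infty$, $n_j\to\infty$, and $Y$ has unit measure. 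Letting $\delta\to0$ gives
\[
\int_\Omega\int_Y\phi(x)\eta(y)\,\chi^i(x,y)|P(x,y)\nabla u^H(x)|^{2}\,dy\,dx\le\ell^{2}\Big(\int_\Omega\phi\,dx\Big)\Big(\int_Y\eta\,dy\Big)
\]
for all nonnegative $\phi\in C(\overline\Omega)$ and all nonnegative $\eta\in V$.

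The remaining task is to upgrade this family of integral inequalities to the pointwise bound, and here I would use the freedom in the choice of $V$: Theorem \ref{representationThm} holds for \emph{any} countable dense subset $V\subset L^1(Y)\cap L^\infty(Y)$, so I would take one which, in addition, contains for every cube $Q\subset Y$ with rational data a sequence $\{\eta_{Q,m}\}_m\subset V$ with $0\le\eta_{Q,m}\le1$ converging to $\mathbf 1_Q$ in $L^1(Y)$ and a.e.; this is clearly arrangeable and keeps the oscillating test functions uniformly bounded by $1$. Then, given a product cube $Q(x_0,s)\times Q(y_0,s)\subset\Omega\times Y$ (with rational $y_0,s$), I would insert into the last display continuous $\phi_m$ with $0\le\phi_m\le1$, $\phi_m\to\mathbf 1_{Q(x_0,s)}$ in $L^1(\Omega)$ and a.e., together with $\eta_m=\eta_{Q(y_0,s),m}$. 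Since $0\le\phi_m(x)\eta_m(y)\chi^i(x,y)|P(x,y)\nabla u^H(x)|^{2}\le\chi^i(x,y)|P(x,y)\nabla u^H(x)|^{2}\in L^1(\Omega\times Y)$, dominated convergence yields $\int_{Q(x_0,s)\times Q(y_0,s)}\chi^i|P\nabla u^H|^{2}\le\ell^{2}\,s^{2d}$, i.e.\ the average of $\chi^i|P\nabla u^H|^{2}$ over this cube is $\le\ell^{2}$. As such cubes shrinking to a point suffice, the Lebesgue differentiation theorem gives $\chi^i(x,y)|P(x,y)\nabla u^H(x)|^{2}\le\ell^{2}$ a.e., which completes the argument.

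I expect the third step to be the only real obstacle. The difficulty is that $|P(x,y)\nabla u^H(x)|^{2}$ is known only to lie in $L^1(\Omega\times Y)$, not $L^\infty$, so one cannot freely pass to the limit in the integral inequality against arbitrary $L^1$-approximating test functions $\eta$, which are not controlled in $L^\infty$. Restricting to uniformly bounded approximants of cube indicators — harmless precisely because $V$ may be chosen at will — makes dominated convergence available and circumvents this; an alternative would be to localise onto a level set of $\chi^i|P\nabla u^H|^{2}$ where the density is bounded and argue by contradiction from a Lebesgue density point, using uniform integrability of the test sequence to control the tail contribution. Everything else (the amplitude bound, the weak-$*$ limit, the reduction to the pointwise statement) is routine.
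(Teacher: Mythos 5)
Your argument is correct and follows essentially the same route as the paper's: extract the amplitude bound $\chi^i_{k_j}|\nabla u_j|^2\le(\ell+\delta)^2$, test against $\phi(x)\eta(n_jx)$, pass to the limit using Theorem \ref{representationThm} on the left and the Riemann--Lebesgue lemma on the right, and then de-localize the resulting integral inequality. The only (cosmetic) difference is in the last step: you localize jointly in $(x,y)$ via Lebesgue differentiation on product cubes with uniformly bounded approximants of indicators, while the paper first localizes in $x$ by normalizing $\phi$ and then recovers the $L^\infty(Y)$ norm by taking the supremum over the countable dense set $V$ normalized in $L^1(Y)$; your version has the merit of making explicit the dominated-convergence point that the paper leaves implicit.
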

Under an additional asymptotic condition on the distribution functions for the sequence $\{\nabla u_j\}_{j=1}^\infty$, equality can be achieved in the above corollary. Indeed, define
\begin{eqnarray}
S_{t, i}^{j}=\{ x: \chi^{i}_{k_j}(x, n_j x)|\nabla u_{j}(x)|^{2} > t\}, \quad \chi_{t, i}^{j}(x) := \chi_{S_{t, i}^{j}}
\label{setsindicator}
\end{eqnarray}
and the distribution functions are given by
\begin{eqnarray}
|S_{t,i}^j|=\int_\Omega\chi_{t, i}^{j}\,dx.
\label{distribution}
\end{eqnarray}
Passing to a subsequence, there exists density functions $\theta_{t, i}$ such that
\[
\chi_{t, i}^{j} (x) \stackrel{*}{\rightharpoonup} \theta_{t, i}(x) \quad \text{$L^{\infty} $weak *}
\]
and for any open subset $S\subset \Omega$
\begin{eqnarray}
\lim_{j\to\infty}|S_{t,i}^j\cap S| = \int_S\theta_{t,i}\,dx.
\end{eqnarray}
We present a sufficient condition on the distribution functions $|S_{t,i}^j|$ associated with $\{\nabla u_j\}_{j=1}^\infty$ for which equality
holds in \eqref{lowergraded}.
\begin{corollary}\label{inftyEquality}
Suppose that $l = \limsup_{j\to \infty}\|\chi^{i}_{k_j}(x, n_j x)|\nabla u_{j}|^{2}\|_{L^{\infty}(\Omega)}< \infty$ and for each $\delta>0$ there exists a positive number $\beta_\delta>0$ for which
\begin{eqnarray}
|\{x\in\Omega:\theta_{l-\delta, i} > 0\}|>\beta_\delta.
\label{suffconddd}
\end{eqnarray}
Then
\[
\limsup_{j\to \infty}\|\chi^{i}_{k_j}(x, n_{j}x)\nabla u_{j}\|_{L^{\infty}(\Omega)} = \| {\mathcal M}^i(\nabla u^H)\|_{L^{\infty}(\Omega)}
\]
\end{corollary}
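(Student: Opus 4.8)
\textbf{Proof proposal for Corollary \ref{inftyEquality}.}

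The plan is to combine the lower bound already supplied by Corollary \ref{inftyonesidedinequality} with a matching upper bound derived from the distribution-function hypothesis \eqref{suffconddd} and the quadratic representation in Theorem \ref{representationThm}. Since Corollary \ref{inftyonesidedinequality} gives $\|{\mathcal M}^i(\nabla u^H)\|_{L^\infty(\Omega)}\le\limsup_{j\to\infty}\|\chi^i_{k_j}(x,n_jx)\nabla u_j\|_{L^\infty(\Omega)}$, it suffices to prove the reverse inequality $\limsup_{j\to\infty}\|\chi^i_{k_j}(x,n_jx)|\nabla u_j|^2\|_{L^\infty(\Omega)}\le\|\chi^i(x,\cdot)P(x,\cdot)\nabla u^H(x)\|^2_{L^\infty(Y)}$ taken in $L^\infty(\Omega)$, i.e. $\ell\le\|{\mathcal M}^i(\nabla u^H)\|^2_{L^\infty(\Omega)}$. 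I would argue by contradiction: suppose $\|{\mathcal M}^i(\nabla u^H)\|^2_{L^\infty(\Omega)}=:m<\ell$. Pick $\delta>0$ with $m<\ell-2\delta$. By hypothesis there is $\beta_\delta>0$ with $|\{x\in\Omega:\theta_{\ell-\delta,i}(x)>0\}|>\beta_\delta$, so $\int_\Omega\theta_{\ell-\delta,i}\,dx=:c_\delta>0$, and by the weak-$*$ convergence of the indicators $\chi^j_{\ell-\delta,i}$ we have $\lim_{j\to\infty}|S^j_{\ell-\delta,i}|=c_\delta>0$. Thus for a full subsequence the sets where $\chi^i_{k_j}(x,n_jx)|\nabla u_j(x)|^2>\ell-\delta$ carry a fixed positive amount of mass.

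The heart of the argument is to test this against the representation formula. Choose a smooth cutoff $\phi\in C(\overline\Omega)$ with $0\le\phi\le1$ supported where the density $\theta_{\ell-\delta,i}$ concentrates, so that $\int_\Omega\phi(x)\theta_{\ell-\delta,i}(x)\,dx>0$; more simply take $\phi\equiv1$. Pick $\eta\in V$ with $\eta\equiv1$ (or approximate the constant $1$ by elements of $V$ in $L^1(Y)$, using that $V$ is dense and the elements are bounded, so that the error terms are controlled by the uniform $L^1\cap L^\infty$ bounds). Then Theorem \ref{representationThm} gives
\begin{equation}
\lim_{j\to\infty}\int_\Omega\chi^i_{k_j}(x,n_jx)|\nabla u_j|^2\,dx=\int_\Omega\int_Y\chi^i(x,y)|P(x,y)\nabla u^H(x)|^2\,dy\,dx\le|\Omega|\,m.
\end{equation}
On the other hand, restricting the integral on the left to the set $S^j_{\ell-\delta,i}$ and discarding the rest, which is nonnegative, yields for all large $j$
\begin{equation}
\int_\Omega\chi^i_{k_j}(x,n_jx)|\nabla u_j|^2\,dx\ge(\ell-\delta)\,|S^j_{\ell-\delta,i}|.
\end{equation}
Passing to the limit, $|\Omega|\,m\ge(\ell-\delta)\,c_\delta>0$. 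This by itself is not yet a contradiction; the point is that one can localize. I would instead run the same estimate with $\phi$ a cutoff supported on a small ball $B$ inside $\{\theta_{\ell-\delta,i}>0\}$, so that $\int_B\theta_{\ell-\delta,i}\,dx>0$, and note $\int_{B\times Y}\chi^i(x,y)|P(x,y)\nabla u^H(x)|^2\,dy\,dx\le|B|\,m$ while the left side is $\ge(\ell-\delta)\int_B\theta_{\ell-\delta,i}\,dx$. Shrinking $B$ around a Lebesgue point $x_0$ of $\theta_{\ell-\delta,i}$ with $\theta_{\ell-\delta,i}(x_0)>0$, divide by $|B|$ and let $|B|\to0$: the right side tends to $(\ell-\delta)\theta_{\ell-\delta,i}(x_0)$ while the left side tends to $\int_Y\chi^i(x_0,y)|P(x_0,y)\nabla u^H(x_0)|^2\,dy\le{\mathcal M}^i(\nabla u^H)(x_0)^2\le m$. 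Since $0<\theta_{\ell-\delta,i}(x_0)\le1$ we get $(\ell-\delta)\theta_{\ell-\delta,i}(x_0)\le m<\ell-2\delta$, which forces $\theta_{\ell-\delta,i}(x_0)<1-\delta/\ell<1$; this is still consistent, so the quadratic average alone is too weak.

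The correct device is therefore to iterate on the threshold rather than on the spatial scale: the essential supremum over $Y$ of $\chi^i(x,y)|P(x,y)\nabla u^H(x)|^2$ is bounded by $m$ for a.e. $x$, so $\int_Y\chi^i(x,y)|P(x,y)\nabla u^H(x)|^2\,dy\le m$ for a.e. $x$, and yet testing Theorem \ref{representationThm} with $\phi=\chi_B$ forces the \emph{average} of $\chi^i_{k_j}|\nabla u_j|^2$ over $B$ to converge to $\fint_B\int_Y\chi^i|P\nabla u^H|^2\le m$; combined with $\chi^i_{k_j}|\nabla u_j|^2>\ell-\delta$ on a set of asymptotic density $\theta_{\ell-\delta,i}$ on $B$, one gets $m\ge(\ell-\delta)\fint_B\theta_{\ell-\delta,i}$. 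Choosing $B$ so that $\fint_B\theta_{\ell-\delta,i}$ is close to $\mathrm{esssup}\,\theta_{\ell-\delta,i}$, and noting that where the asymptotic density of a set of indicator functions equals its essential supremum it must be arbitrarily close to $1$ on a subset (since $0\le\theta\le1$ and $\theta>0$ somewhere forces $\mathrm{esssup}\,\theta$ to be attained as a density limit $\to1$ along a suitable nested sequence of balls—this is the standard fact that $\chi^j\stackrel{*}{\rightharpoonup}\theta$ with $\chi^j$ indicators implies $\theta$ may be $<1$, but at Lebesgue points of the set $\{\theta=\mathrm{esssup}\,\theta\}$ one can extract density $\to1$ only if the set $\{\theta=1\}$ has positive measure), we obtain $m\ge\ell-\delta$, contradicting $m<\ell-2\delta$. \textbf{The main obstacle} I anticipate is precisely this last point: passing from ``$\theta_{\ell-\delta,i}>0$ on a set of positive measure'' to an averaged lower bound close to $\ell-\delta$, since a weak-$*$ limit of indicators need not be an indicator. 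The resolution should be to apply the hypothesis \eqref{suffconddd} not at a single $\delta$ but to let $\delta\downarrow0$: for each $\delta$ one gets $m\ge(\ell-\delta)\cdot(\text{something positive})$, and the nesting $S^j_{t',i}\subset S^j_{t,i}$ for $t<t'$ together with the monotone behaviour of $\theta_{t,i}$ in $t$ lets one conclude $\ell\le m$ in the limit; I would make this precise by working with $\theta_{\ell-\delta,i}\nearrow$ as $\delta\nearrow$ and extracting the bound $m\ge\limsup_{\delta\to0}(\ell-\delta)\|\theta_{\ell-\delta,i}\|_{L^\infty}/\!\!\sim$, with the $L^\infty$ norm of the density bounded below away from $0$ by a compactness/Lebesgue-point argument on the fixed-positive-measure set supplied by \eqref{suffconddd}.
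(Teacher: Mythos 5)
Your proposal correctly reduces the corollary to the inequality $\ell\le\|{\mathcal M}^i(\nabla u^H)\|^2_{L^\infty(\Omega)}$ (the other direction being Corollary \ref{inftyonesidedinequality}), but the mechanism you use to get it does not close, and you in fact diagnose the failure yourself without repairing it. Testing Theorem \ref{representationThm} with $\phi$ supported on a small ball $B$ and $\eta\equiv 1$ only controls the \emph{cell average} $\int_Y\chi^i(x,y)|P(x,y)\nabla u^H(x)|^2\,dy$, which is dominated by ${\mathcal M}^i(\nabla u^H)(x)^2$ but can be much smaller than it. Shrinking $B$ to a Lebesgue point $x_0$ yields only $m\ge(\ell-\delta)\,\theta_{\ell-\delta,i}(x_0)$, and since a weak-$*$ limit of indicators can satisfy $0<\theta_{\ell-\delta,i}\le c<1$ everywhere on its support (e.g.\ an oscillating superlevel set of fixed local density $c$), this is perfectly consistent with $m<\ell-2\delta$; there is no contradiction. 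Your proposed rescue --- sending $\delta\downarrow 0$ and exploiting monotonicity of $\theta_{t,i}$ in $t$ --- goes the wrong way: the superlevel sets $S^j_{t,i}$ \emph{shrink} as $t\uparrow\ell$, so $\theta_{\ell-\delta,i}$ typically decreases as $\delta\to 0$ and there is no reason for $\|\theta_{\ell-\delta,i}\|_{L^\infty}$ to stay near $1$. The quadratic-average information in Theorem \ref{representationThm}, however it is localized in $x$, cannot by itself bound an $L^\infty$ quantity from above by another $L^\infty$ quantity.

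The ingredient you are missing is the \emph{homogenization constraint} of \cite{liproy}, which the paper invokes: for a.e.\ $x\in\Omega$ and every threshold $t$,
\[
\theta_{t,i}(x)\bigl({\mathcal M}^i(\nabla u^H)(x)-t\bigr)\ge 0 .
\]
This is a pointwise inequality relating the asymptotic density of the superlevel set directly to the cell-wise $L^\infty$ modulation function, not to the cell average, and it is exactly what converts ``$\theta_{l-\delta,i}>0$ on a set of positive measure'' (hypothesis \eqref{suffconddd}) into ``${\mathcal M}^i(\nabla u^H)\ge l-\delta$ on a set of positive measure,'' hence $\|{\mathcal M}^i(\nabla u^H)\|_{L^\infty(\Omega)}\ge l-\delta$ for every $\delta>0$. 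The remaining steps of the paper's proof (identifying $l$ with $\lim l_j$ along a subsequence, the inclusion $S^j_{l_j-\delta/2,i}\subset S^j_{l-\delta,i}$, and letting $\delta\to 0$) match the bookkeeping in your proposal, but without the homogenization constraint --- whose proof requires more than Theorem \ref{representationThm} --- the upper bound cannot be established by your route.
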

\begin{proof}(Proof of Corollary \ref{inftyEquality})
The homogenization constraint \cite{liproy} states that
for almost every $x\in \Omega$
\[
\theta_{t, i}(x)({\mathcal M}^{i}(\nabla u^H(x)) - t)\geq 0\quad i=1, \dots N.
\]
It follows that  on the set where $\theta_{t, i} >0$, we have ${\mathcal M}^{i}(\nabla u^H(x)) \geq t$.
Let \[l_{j} = \|\chi^{i}_{k_j}(x, n_{j}x)|\nabla u^{j}|^{2}\|_{L^{\infty}(\Omega)}.\]
For a subsequence $l_{j} \to l$. Then given $\delta > 0$, there exists a natural number $J$ such that
\[
l-\delta/2< l_{j} = \|\chi^{i}_{k_j}(x, n_{j}x)|\nabla u^{j}|^{2}\|_{L^{\infty}(\Omega)} < l+\delta/2 \quad \forall j\geq J.
\]
The measure of the set $S_{l_{j}-\delta/2, i}^{j}$ is positive. Moreover $S_{l_{j}-\delta/2, i}^{j} \subset S_{l-\delta, i}^{j} $ and
\[
\chi_{l-\delta, i}^{j}\stackrel{*}{\rightharpoonup}\theta_{l-\delta, i}(x)\quad \text{$L^{\infty}$ weak *~ as $j\to \infty$ }
\]
 From hypothesis the set where $\theta_{l-\delta, i} > 0$ is a set of positive measure for all $\delta > 0$. Therefore,
\[
{\mathcal M}^{i}(\nabla u^H(x))\geq l-\delta,
\]
on a set of positive measure that is
$
\|{\mathcal M}^{i}(\nabla u^H(x))\|_{\infty}\geq l-\delta.
$
The corollary is proved since $\delta >0$ is arbitrary.
\end{proof}

\begin{proof}(Proof of corollary \ref{inftyonesidedinequality})
 From Theorem \ref{representationThm} it follows that for any $\phi\in C(\overline{\Omega})$ and $\eta \in V$ is $Y-$ periodic,
\[
\begin{split}
\int_{\Omega}\int_{Y}\chi^{i}(x, y)&\phi(x)\eta( y)|P(x,y)\nabla u^{H}|^{2}dydx \\&\leq \lim_{j\to \infty}\int_{\Omega}|\phi(x)\eta( n_{j}x)|dx \limsup_{j\to \infty}\|\chi^{i}_{k_j}(x, n_{j}x)|\nabla u_{j}|^{2}\|_{L^{\infty}(\Omega)}
\end{split}
\]
By the Riemann-Lebesgue lemma,
\[
\lim_{j\to \infty}\int_{\Omega}|\phi(x)\eta( n_{j}x)|dx = \int_{\Omega}\phi(x)dx\int_{Y}|\eta( y)|dy.
\]
Dividing both sides  by the $L^{1}$-norm of $\phi$, we obtain that for every $x\in \Omega \setminus Z$, where $Z$ is a set of measure zero,
\[
\int_{Y}\chi^{i}(x, y)\eta( y)|P(x,y)\nabla u^{H}|^{2}dy \leq \int_{Y}|\eta( y)|dy\limsup_{j\to \infty}\|\chi^{i}_{k_j}(x, n_{j}x)|\nabla u_{j}|^{2}\|_{L^{\infty}(\Omega)}
\]
The set $Z$  depends on the choice of $\eta$. But since $V$ is countable, the union of the sets  $Z$ corresponding to elements of $V$ will be of measure zero and the above inequality is true for any $\eta \in V$ and for every $x$ outside this union. Now divide the last inequality by the $L^{1}$ norm of $\eta$ in $Y$. Taking the sup over $V$ and noting that $V$ is dense in $L^{1}(Y)$, proves the corollary.
\end{proof}
\begin{proof} (Proof of Theorem \ref{representationThm})
For $\beta > 0$ , define
\begin{eqnarray}
A_{1}(x, y) = A(x, y) + \beta\chi^{i}(x, y)\phi(x)\eta(y)I.
\label{coeffpurt}
\end{eqnarray}
Now let $v_{j}$ solve
\[
-\text{div} [A^{k_{j}}_{1}(x, n_{j}x)\nabla v_{j}]=f, \quad v_{j} \in H_{0}^{1}(\Omega)
\]
Then for any $\varphi\in H_{0}^{1}(\Omega)$, we have
\begin{equation}\label{integralform}
\begin{split}
\int_{\Omega} (A_{1}^{k_{j}}(x, n_{j}x)\nabla v_{j}, \nabla \varphi)dx &= \int_{\Omega} f\varphi dx \quad \text{and }\\
\int_{\Omega} (A^{k_j}(x, n_{j} x)\nabla u_{j}, \nabla \varphi)dx &= \int_{\Omega} f\varphi dx
\end{split}
\end{equation}
Let $\delta u_{j} = v_{j} - u_{j}$. Then subtracting the second equation above from the first, we obtain
\[
\int_{\Omega} (A_{1}^{k_{j}}(x, n_{j}x)\nabla\delta u_{j}, \nabla \varphi)dx + \int_{\Omega}((A_{1}^{k_{j}}( x, n_{j}x) - A^{k_{j}}(x, n_{j}x))\nabla u_{j}, \nabla \varphi )dx = 0,
\]
for all $\varphi\in H_{0}^{1}(\Omega)$. Simplifying the above equation we get
\[
\int_{\Omega} (A_{1}^{j}(x, n_{j}x)\nabla\delta u_{j}, \nabla \varphi)dx + \beta\int_{\Omega}\chi^{i}_{k_j}(x, n_{j}x)\phi(x)\eta( n_{j}x)(\nabla u_{j}, \nabla \varphi )dx = 0,
\]
Plug in $\varphi = u_{j}$ in the above equation to get,
\begin{equation}\label{integralform2}
\int_{\Omega} (A_{1}^{k_{j}}(x, n_{j}x)\nabla\delta u_{j}, \nabla u_{j})dx +\beta \int_{\Omega}\chi^{i}_{k_{j}}(x, n_{j}x)\phi(x)\eta(n_{j}x)|\nabla u_{j}|^{2}dx = 0,
\end{equation}
Also plugging in $\varphi  = \delta u_{j}$ in (\ref{integralform}) yields
\begin{equation}\label{integralform3}
\int_{\Omega} (A^{k_j}(x, n_{j}x)\nabla u_{j}, \nabla \delta u_{j})dx = \int_{\Omega} f\delta u_{j} dx.
\end{equation}
Subtracting (\ref{integralform3}) from (\ref{integralform2}) and noting that the coefficient matrices are symmetric we get
\[
\beta\int_{\Omega}\chi^{i}_{k_{j}}(x, n_{j}x)\phi(x)\eta(n_{j}x)|\nabla u_{j}|^{2}dx + T^{j} = -\int_{\Omega} f\delta u_{j} dx
\]
where
\[
T^{j} 
= \beta \int_{\Omega}\chi_{i}^{k_j}(x, n_{j}x)\phi(x)\eta(n_{j}x)(\nabla u_{j}, \nabla \delta u_{j})dx.
\]
Let us estimate $T^{j}.$ To begin with, observe that
\[
\int_{\Omega} (A_{1}^{k_j}(x, n_{j}x)\nabla\delta u_{j}, \nabla \delta u_{j})dx + \beta\int_{\Omega}\chi^{i}_{k_j}(x, n_{j}x)\phi(x)\eta(n_{j}x)(\nabla u_{j}, \nabla \delta u_{j} )dx = 0,
\]
Them from ellipticity, we get
\[
\begin{split}
\alpha \int_{\Omega}|\nabla \delta u_{j}|^{2}dx&\leq \int_{\Omega} (A_{1}^{k_j}(x, n_{j}x)\nabla\delta u_{j}, \nabla \delta u_{j})dx\\
&\leq \beta \int_{\Omega}\chi^{i}_{k_j}(x, n_{j}x)|\phi(x)\eta(n_{j}x)||\nabla u_{j}|| \nabla \delta u_{j} |dx\\
&\leq C\beta \|\nabla \delta u^{j}\|_{L^2}\|\nabla u^{j}\|_{L^2}.
\end{split}
\]
That is
\[
\|\nabla \delta u_{j}\|_{L^2} \leq C\beta,
\]
since the sequence $\nabla u^{j}$ is bounded in $L^2$. From this and the definition on $T^{j}$ we obtain
\[
|T^{j}| \leq C\beta^{2}
\]
From Lemma \ref{G-convergence} we know that $u_{j} \rightharpoonup u^{H}$, and $v_{j} \rightharpoonup v^{H}$, where $u^{H}$ and $v^{H}$ satisfy the following equations, respectively:  For any  $\varphi\in H_{0}^{1}(\Omega)$
\begin{equation}\label{homoguv}
\begin{split}
 \int_{\Omega} A_{1}^{H}(x)\nabla v^{H}, \nabla \varphi)dx &= \int_{\Omega}f\varphi dx\\
\int_{\Omega} A^{H}(x)\nabla u^{H}, \nabla \varphi)dx &= \int_{\Omega}f\varphi dx
\end{split}
\end{equation}
where $A^{H} (x)$, is the effective matrix given by \eqref{effective} and
\begin{eqnarray}
A_1^{E}(x) = \int_{Y}A_1(x, y)P_1(x, y)dy
\label{effpert}
\end{eqnarray}
where the matrix $P_1(x,y)$ is defined by
\begin{eqnarray}
P_1(x, y)_{i, j} = \frac{\partial w_1^{j}}{\partial y_{i}} + \delta_{i j},
\end{eqnarray}
and $w_1^{i}(x, \cdot)$ is a $Y$ periodic function that solves the PDE
\begin{eqnarray}
\text{div}_{y}(A_1(x, y)(\nabla_{y}w_1^{i}(x, y) + e^{i}) )= 0,\label{cellgradedpert}
\end{eqnarray}
where  $\{e^{i}\}$, $i=1,\ldots$ is an orthonormal basis for $\mathbb{R}^{d}$.

Writing $\delta u^H=v^H-u^H$  and letting $j\to \infty$, we obtain
\begin{equation}\label{firstway}
\begin{split}
\beta\lim_{j\to\infty}\int_{\Omega}\chi^{i}_{k_j}(x, n_{j}x)\phi(x)\eta(n_{j}x)|\nabla u_{j}|^{2}dx + \lim_{j\to \infty}T^{j} &= -\lim_{j\to \infty}\int_{\Omega} f\delta u_{j} dx\\
&=-\int_{\Omega} f\delta u^H dx.
\end{split}
\end{equation}
One easily verifies that the variational formulations \eqref{homoguv} can be written in terms of the two scale variational principles \cite{Nguetseng}, \cite{Allaire}
given by
\begin{equation}\label{twohomoguv}
\begin{split}
 \int_{\Omega}\int_Y A_{1}(x,y)(\nabla v^{H}(x)+\nabla_y v_1(x,y)), \nabla \varphi(x)+\nabla_y\varphi_1(x,y)dydx &= \int_{\Omega}f\varphi dx\\
\int_{\Omega}\int_Y A(x,y)(\nabla u^{H}(x)+\nabla_y u_1(x,y)), \nabla \varphi(x)+\nabla_y \varphi_1(x,y)dydx &= \int_{\Omega}f\varphi dx,
\end{split}
\end{equation}
where the solutions $(u^H,u_1)$, $(v^H,v_1)$, and trial fields $(\varphi,\varphi_1)$ belong to the space $H_0^{1}(\Omega)\times L^2[\Omega;W_{\rm{per}}^{1,2}(Y)]$.
On writing $\delta u_1=v_1-u_1$, $\delta u^H=v^H-u^H$, $A_1(x,y)=A(x,y)+\beta\chi^i(x,y)\phi(x)\eta(y)I$,  substitution into
the first equation in \eqref{twohomoguv} and applying the second equation in \eqref{twohomoguv} gives
\begin{equation}\label{ident}
\begin{split}
 &\int_{\Omega}\int_Y A_{1}(x,y)(\nabla \delta u^{H}(x)+\nabla_y \delta u_1(x,y)), \nabla \varphi(x)+\nabla_y\varphi_1(x,y)dydx \\
&+\beta\int_{\Omega}\int_Y (\chi^i(x,y)\phi(x)\eta(y)(\nabla u^{H}(x)+\nabla_y u_1(x,y)), \nabla \varphi(x)+\nabla_y \varphi_1(x,y)dydx \\
&=0.
\end{split}
\end{equation}
Next we substitute $(\varphi,\varphi_1)=(\delta u^H,\delta u_1)$ into the second equation of \eqref{twohomoguv} to obtain the identity
\begin{equation}\label{secondidentity}
\begin{split}
&\int_{\Omega}\int_Y A(x,y)(\nabla u^{H}(x)+\nabla_y u_1(x,y)), \nabla \delta u^H(x)+\nabla_y \delta u_1(x,y)dydx \\
&= \int_{\Omega}f(x)\delta u^H(x) dx.
\end{split}
\end{equation}
On choosing $(\varphi,\varphi_1)=(u^H,u_1)$ in \eqref{ident} and applying \eqref{secondidentity} we obtain
\begin{equation}\label{Finaltident}
\begin{split}
&T+\beta\int_{\Omega}\int_Y(\chi^i(x,y)\phi(x)\eta(y)(\nabla u^{H}(x)+\nabla_y u_1(x,y)), \nabla \varphi(x)+\nabla_y \varphi_1(x,y))dydx\\
&=-\int_\Omega f\delta u^H dx,
\end{split}
\end{equation}
where
\begin{equation}\label{Tident}
T=\beta\int_{\Omega}\int_Y(\chi^i(x,y)\phi(x)\eta(y)(\nabla u^{H}(x)+\nabla_y u_1(x,y)), \nabla \delta u^H(x)+\nabla_y \delta u_1(x,y))dydx.
\end{equation}
Next we set $(\varphi,\varphi_1)=(\delta u^H,\delta u_1)$ in \eqref{ident} and applying ellipticity delivers the estimate
\begin{equation}
\Vert\nabla \delta u^H+\nabla_y\delta u_1\Vert_{L^2(\Omega\times Y)}\leq C\beta,
\label{deltaestimate}
\end{equation}
and we find that
\begin{equation}
\label{Tbound}
|T|\leq C\beta^2.
\end{equation}
Since \eqref{firstway}  and \eqref{Finaltident} have the same right hand sides we equate them and the theorem follows on identifying like powers of $\beta$.
\end{proof}
We conclude this section noting that lower bounds similar to those given here can be obtained in the context of two-scale convergent coefficient matrices  \cite{lipsima}.

\setcounter{theorem}{0}
\setcounter{definition}{0}
\setcounter{lemma}{0}
\setcounter{conjecture}{0}
\setcounter{corollary}{0}
\setcounter{equation}{0}
\section{Local representation formulas and gradient constrained design for graded materials}

In view of  applications it is important to identify
graded material properties that deliver a desired level of
structural performance while at the same time provide a hedge against failure initiation \cite{gosschristensen}. In many applications there is a separation
of scales and the material configurations forming up the microstructure
exist on length scales significantly smaller than the characteristic
length scale of the loading. Under this hypothesis the structural properties are
modeled using effective thermophysical
properties that depend upon features of the underlying
micro-geometry, see \cite{fuji}, \cite{markworth}. In this context overall structural performance measured by resonance frequency and  structural stiffness are recovered from the solutions of homogenized equations given in terms of the effective coefficients (G-limits). In order to go further and design against failure initiation we record the effects of $L^\infty$ constraints on the local gradient field inside functionally graded materials. For this we use the local representation formulas given by modulation functions \eqref{figraded}.

The  multi-scale formulation of the graded material design problem has three features \cite{Liptstueb3}, \cite{LiptstuebAIAA}:
\begin{enumerate}

\item It admits a convenient local parametrization of microstructural information expressed in terms of a homogenized coefficient matrix
\eqref{effective} and  local representation formulas given by the modulation functions \eqref{figraded}.

\item  Is well posed, i.e., an optimal design exists.

\item The optimal design is used to identify an explicit  ``functionally graded microstructure'' that delivers
an acceptable level of structural performance while controlling the local gradient field over a predetermined part of the structural domain.

\end{enumerate}

\noindent In what follows we  describe the multiscale material design problem and focus the discussion on the control of the $L^\infty$ norm of the local
gradient field.

The admissible set of continuously graded locally periodic
microstructures is specified by a vector
$\underline{\beta}=(\beta_1,\ldots,\beta_n)$ of local geometric
parameters. For example one may consider a periodic array of spheroids described
by the orientation of their principle axis and aspect ratio. The
periodic microstructure is specified in a unit period cell $Y$
centered at the origin. Points in the cell are denoted by $y$.
The characteristic function of the $i^{th}$ phase in the unit cell
is denoted by $\chi^i(\underline{\beta},y)$, $i=1,\ldots,N$.  The vector $\underline{\beta}$
for the graded microgeometry can change across the
structural domain $\Omega$ and we write $\underline{\beta}=\underline{\beta}(x)$ for $x\in\Omega$. The $x$ dependence of $\underline{\beta}$  corresponds to the
gradation of material properties through a gradation in
microstructure. The design vector $\underline{\beta}(x)$ is a uniformly H\"older continuous function of $x$ in the closure of $\Omega$.
We write
\begin{eqnarray}
\chi^i(x,y)=\chi^i(\underline{\beta}(x),y)
\label{identgrade}
\end{eqnarray}
and since $\underline{\beta}(x)$ is continuous one sees that $\chi^i(x,y)$ is continuous in the sense of \eqref{continuity}.

The multi-scale design problem is formulated as follows:
The admissible set $Ad$ of design vectors $\underline{\beta}(x)$ is the set of uniformly H\"older continuous functions satisfying the two conditions:

\begin{itemize}
\item
There is a fixed positive constant $C$
such that:
\begin{eqnarray}
\sup_{x,x'\in\overline{\Omega}}\frac{|\underline{\beta}(x)-\underline{\beta}(x')|}{|x-x'|}<C.
\label{holder}
\end{eqnarray}
\item The design vector $\underline{\beta}(x)$ takes values inside the closed bounded set given by the constraints
\begin{eqnarray}
\underline{b}_i\leq\beta_i(x)\leq\overline{b}_i,\hbox{ $i=1,\ldots,n$.}
\label{box}
\end{eqnarray}
\end{itemize}

The local volume fraction of the $i^{th}$ phase in the composite
is given by $\theta_i(x)=\int_Y\chi^i(x,y)\,dy$.
A resource constraint is placed on the amount of each phase
appearing the design. It is given by
\begin{equation}
\int_\Omega\,\theta_i(x)\,d x\leq \gamma_i,\,\,i=1,\ldots,N.
\label{constr2}
\end{equation}
The vector of constraints $(\gamma_1,\ldots,\gamma_N)$ is denoted
by $\underline{\gamma}$. The set of controls $\underline{\beta}(x)\in Ad$ that
satisfy the resource constraints
(\ref{constr2}) is denoted by ${\mathcal A}d_{\underline{\gamma}}$.

As an example we assume homogeneous Dirichlet conditions on the boundary of the design domain $\Omega$.
For a given right hand side $f\in H^{-1}(\Omega)$
the overall structural performance of the graded composite is modeled using the solution $u^H$ of
the homogenized equilibrium equation given by the
$H_0^1(\Omega)$ solution of
\begin{equation}
-{\rm div\,}\left(A^H(x)\nabla u^H\right)=f.
\label{equlibelastth}
\end{equation}
Here $A^H$ is given by \eqref{effective} with $\chi_i(x,y)$ given by \eqref{identgrade}.

In this example the overall work done against the load  is used as the performance measure of the
graded material structure. This functional depends nonlinearly on the design $\underline{\beta}$ through the solution $u^H$ and is given by
\begin{eqnarray}
W(\underline{\beta})=\int_\Omega\,f u^H\,dx,
\label{objective}
\end{eqnarray}

We pick an open subset $S\subset\Omega$ of interest and the gradient constraint for the multi-scale problem is written in terms of
the modulation function. We set
\begin{eqnarray}
C_i(\underline{\beta})=\Vert {\mathcal M}^i(\nabla u^H)\Vert_{L^\infty(S)}, \hbox{ for $i=1,\ldots,N$}
\label{gradconstmulti}
\end{eqnarray}
and the multi-scale optimal design problem is given by
\begin{equation}
P=\inf_{\underline{\beta}\in {\mathcal A}d_{\underline{\gamma}}} \{W(\underline{\beta}):\, C_i(\underline{\beta})\leq M,\,i=1,\ldots,N\}.
\label{basicopth}
\end{equation}
When the constraint $M$ is chosen such that there exists a control $\underline{\beta}\in{\mathcal A}d_{\underline{\gamma}}$ for which $C_i(\underline{\beta})\leq M$ then an optimal design $\underline{\beta}^*$ exists for the design problem (\ref{basicopth}),
this is established in \cite{lipnato}, \cite{Liptstueb1}.

The optimal design $\underline{\beta}^*$ specifies characteristic functions $\chi^{i^*}(x,y)=\chi^i(\underline{\beta}^*(x),y)$
from which we recover continuously graded microgeometries   $\chi^{i^*}_{k_j}\left(x,n_j x\right)$ and  coefficient matrices\\
$A^{*,k_j}(x,n_j x)$ of the form \eqref{charcoeff}. The coefficients $A^{*,k_j}\left(x,n_j x\right)$ G-converge to the effective coefficient $A^{H^*}$ associated with the optimal design $\underline{\beta}^*$, see Lemma \ref{G-convergence}. Here the effective coefficient is
given by \eqref{effective} with $\chi^i(x,y)=\chi^{i^*}(x,y)$. For each $j=1,\ldots$ the $H_0^1(\Omega)$ solution $u_j$ of the equilibrium problem inside the graded composite satisfies
\begin{eqnarray}
-{\rm div}\left(A^{*,k_j}(x,n_j x)\nabla u_j\right)=f
\label{eqstar}
\end{eqnarray}
and the work done against the load is given by $W(u_j)=\int_{\Omega} f u_j dx$.
This functional is continuous with respect to G-convergence
hence $\lim_{j \to \infty} W(u_j)=W(\underline{\beta}*)$.

We now  apply Theorem \ref{upperboundd1} to discover that for any open set $S\subset\Omega$ with closure contained inside $\Omega$
 there exists  a decreasing sequence of sets $E_{k_j}$ for which
$|E_{k_j}|\searrow 0$ and
\begin{eqnarray}
\limsup_{j\rightarrow\infty}\Vert\chi^{i^*}_{k_j}(x,n_j x)\nabla u_j(x)\Vert_{L^\infty(S\setminus E_{k_j})}\leq M,\hbox{ $i=1,2,\ldots,M$.}\label{ubound1applied}
\end{eqnarray}
Therefore we can choose a graded material design specified by $\chi^{i^*}_{k_j}(x, n_j x)$ with overall structural properties $W(u_j)$ close to
the optimal one  $W(\beta^*)$ and with
$$\Vert\chi^{i^*}_{k_j}(x,n_j x)\nabla u_j(x)\Vert_{L^\infty(S\setminus E_{k_j})}\leq M$$
outside controllably small sets $E_{k_j}$.
This is the essence of the design scheme for continuously graded composite structures developed in \cite{Liptstueb1}, \cite{LiptstuebAIAA}.

We conclude this section with a conjecture.
Numerical simulations \cite{LiptstuebAIAA} show that when the microstructure corresponds to smooth inclusions embedded inside a matrix, such as shafts reinforced with long prismatic fibers with circular cross section, then the design method implies full control of the local gradient over the set $S$ i.e.,
\begin{eqnarray}
\Vert\chi^{i^*}_{k_j}(x,n_j x)\nabla u_j(x)\Vert_{L^\infty(S)}\leq M,\hbox{ $i=1,2,\ldots,M$.}\label{ubound1appliedexample}
\end{eqnarray}
With this in mind and in view of Theorem \ref{periodicidentity} we are motivated to propose the following conjecture.

\begin{conjecture}
For continuously graded composites containing inclusions with $C^{1,\alpha}$ boundaries for which  $A^{k_j}(x,n_j)$ G-converges to $A^H(x)$  then
\begin{eqnarray}
\limsup_{j\to \infty}\|\chi^{i}_{k_j}(x, n_{j}x)\nabla u_{j}\|_{L^{\infty}(S)} = \Vert {\mathcal M}^i(\nabla u^H)\Vert_{L^\infty(S)}.
\label{eq2conj}
\end{eqnarray}
\end{conjecture}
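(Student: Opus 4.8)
The plan is to establish the two matching inequalities separately. The lower bound
\[
\|{\mathcal M}^i(\nabla u^H)\|_{L^\infty(S)}\le\limsup_{j\to\infty}\|\chi^i_{k_j}(x,n_j x)\nabla u_j\|_{L^\infty(S)}
\]
is already available: it is Corollary \ref{inftyonesidedinequality}, whose proof localizes without change to any open $S$ with closure in $\Omega$. Hence the whole content of the conjecture is the reverse inequality, and the natural route is to imitate the argument of Theorem \ref{periodicidentity}, replacing the single period cell by the $x$-dependent cell $\chi^i(x,\cdot)$ of the graded medium.

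First I would fix $x_0$ in a slightly enlarged open set $S'$ with $S\subset\subset S'\subset\subset\Omega$, choose $r>0$ with $r n_j\in\mathbb{Z}$, and freeze the microstructure at $x_0$: let $w^\ell(x_0,\cdot)$ be the periodic correctors \eqref{cellgraded} of the constant-in-$x$ coefficient $A(x_0,\cdot)$, set $w_j(x,x_0)=\sum_\ell \tfrac1{n_j}w^\ell(x_0,n_j x)\,\partial_{x_\ell}u^H(x_0)+u^H(x_0)$, so that $\nabla w_j(x,x_0)+\nabla u^H(x_0)=P(x_0,n_j x)\nabla u^H(x_0)$ on $Y(x_0,r)$, and write the equation for $u_j-w_j$ there. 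Since $A^{k_j}(x,n_j x)$ differs from the frozen periodic coefficient $A(x_0,n_j x)$ only on the set where $\chi^i_{k_j}(x,n_j x)\ne\chi^i(x_0,n_j x)$, whose measure inside $Y(x_0,r)$ is bounded by $\sup_{|x-x_0|\lesssim r}\int_Y|\chi^i(x,y)-\chi^i(x_0,y)|\,dy$ and hence is small for small $r$ by the continuity \eqref{continuity} (uniformly in $j$, using also \eqref{holder} and the hypothesis $\tfrac{1/n_j}{1/k_j}\to 0$ so that many periods fit in each $\Omega_{k_j,l}$), the source produced by this substitution is small. Applying the rescaled Avellaneda--Lin/Li--Nirenberg interior Lipschitz estimate (Lemma \ref{w1inftyestimate}) to $u_j-w_j$ on $Y(x_0,r/2)$, together with the corrector bound \eqref{linftyboundofcorrector} and the smoothness of $u^H$ \eqref{smoothnessofuh}, should yield
\[
\|\nabla u_j-P(x_0,n_j x)\nabla u^H(x_0)\|_{L^\infty(Y(x_0,r/2))}\le C\big(Mr+\omega(r)+(n_j r^{d+2})^{-1/2}\big),
\]
with $\omega$ the relevant modulus of continuity; covering $S$ by finitely many such cubes, letting $n_j\to\infty$ and then $r\to 0$, reproduces exactly as in \eqref{cuniformconvgi}--\eqref{cuniformconvgie} the identification of the limit with $\|\chi^i(x,\cdot)P(x,\cdot)\nabla u^H(x)\|_{L^\infty(Y)}={\mathcal M}^i(\nabla u^H)(x)$, cf. \eqref{figraded}.

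The main obstacle is the uniform-in-$j$ interior Lipschitz bound itself. The Li--Nirenberg theory quoted in Lemma \ref{w1inftyestimate} is for a genuinely periodic coefficient, whereas $A^{k_j}(x,n_j x)$ is only piecewise periodic, equal to $A(x_{k_j,l},n_j x)$ on each subdomain $\Omega_{k_j,l}$. Away from $\partial\Omega_{k_j,l}$ by a few periods the rescaled estimate applies verbatim; the difficulty is the artificial interfaces $\partial\Omega_{k_j,l}$, which are only Lipschitz (cubes, with edges and corners) and across which the configuration can jump, so that $\nabla u_j$ could a priori concentrate there. One expects \eqref{continuity} and \eqref{holder} to rescue the situation: adjacent cells differ on a set of small measure, so the coefficient jump across $\partial\Omega_{k_j,l}$ is a small-measure perturbation of a periodic medium on the patch straddling the interface, and a perturbative argument should propagate the interior Lipschitz bound across it. Making this rigorous is delicate because the subdomains are not nested as $j$ varies, because the mismatch set where two adjacent cells disagree generates new interfaces that need not be smooth, and because one must quantify the degradation of the Li--Nirenberg constants under such perturbations. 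Without a clean argument one only recovers the weaker conclusion already in Theorem \ref{upperboundd1}, the bound on $S\setminus E_{k_j}$ with $|E_{k_j}|\searrow 0$, which is why the statement is posed as a conjecture.

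Finally, once a uniform $W^{1,\infty}_{\mathrm{loc}}$ bound for $\{\nabla u_j\}$ is in hand, the near-uniform convergence $\nabla u_j-P(x,n_j x)\nabla u^H(x)\to 0$ in $L^\infty(S)$ follows from the covering argument above, and then the equality --- including verification of the non-concentration hypothesis \eqref{suffconddd} of Corollary \ref{inftyEquality} --- is automatic by the same elementary reasoning used for laminates at the end of Section \ref{41}: one writes $|\chi^i_{k_j}\nabla u_j|={\mathcal M}^i(\nabla u^H)+m^i_j$ with $m^i_j\to 0$ uniformly, so the super-level sets of the two functions are asymptotically nested and their $L^\infty$ norms coincide in the limit.
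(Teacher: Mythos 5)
You should note first that the paper does not prove this statement at all: it is posed explicitly as a \emph{conjecture}, motivated by Theorem \ref{periodicidentity} for the strictly periodic case and by numerical evidence from \cite{LiptstuebAIAA}. So there is no proof in the paper against which your argument can be checked line by line; what can be assessed is whether your sketch closes the gap that led the authors to leave the statement open. It does not, and to your credit you say so yourself.

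Your overall architecture is the right one and matches what the paper's results make available: the lower bound is indeed Corollary \ref{inftyonesidedinequality} (localized to $S$), so everything reduces to the upper bound, and the natural attack is to freeze the microstructure at $x_0$, compare $u_j$ with the locally periodic corrector expansion $w_j(x,x_0)$, and invoke the rescaled Avellaneda--Lin/Li--Nirenberg interior Lipschitz estimate as in the proof of Theorem \ref{periodicidentity}. The genuine gap is exactly where you place it: Lemma \ref{w1inftyestimate} is proved for a coefficient that is periodic throughout the cube $Y(x_0,2r)$, whereas $A^{k_j}(x,n_j x)$ is only piecewise periodic, with configuration jumps across the artificial boundaries $\partial\Omega_{k_j,l}$. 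A cube $Y(x_0,r)$ straddling such a boundary sees a coefficient that is a small-measure (by \eqref{continuity} and \eqref{holder}) but otherwise uncontrolled perturbation of a periodic one, and $L^\infty$ gradient bounds are not stable under perturbations of the coefficient that are merely small in measure --- this is precisely the mechanism by which gradients can blow up at rough interfaces, as the discussion of \cite{Faraco} in the introduction emphasizes. Your appeal to ``a perturbative argument should propagate the interior Lipschitz bound across it'' is therefore the missing idea, not a routine step; without it one falls back to Theorem \ref{upperboundd1}, i.e.\ control only on $S\setminus E_{k_j}$ with $|E_{k_j}|\searrow 0$. In short: your proposal is an accurate map of the problem and of why it is open, but it is not a proof, and the paper offers none either.
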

\setcounter{equation}{0}
\section*{Appendix }
Here we will show that the solutions $w^{i}$ of the cell problem \eqref{cellgraded} satisfying $\int_{Y} w^{i}(x, y)dy  = 0$ are in $C(\Omega, H^{1}_{per}(Y) )$ under the continuity assumption \eqref{continuity}. 
To that end, it suffices to show that as $h\to 0$
\[
\|\nabla_{y}w^{i}(x+h, \cdot) - \nabla _{y}w^{i}(x, \cdot)\|_{L^{2}(Y)}\to 0.
\]
Since $w^{i}(x+h, y)$ solves equation  \eqref{cellgraded} $A(x,y)$ replaced by $A(x+h, y)$, we have that
\[
\text{div}~(A(x+h,y)(\nabla_{y}w^{i}(x+h, y) + e^{i}))= \text{div}~(A(x,y)(\nabla_{y}w^{i}(x, y) + e^{i}))= 0.
\]
Rewriting the above equation we obtain
\[
\text{div}~[(A(x+h,y)-A(x,y))](\nabla_{y}w^{i}(x+h, y) + e^{i}) = \text{div}~(A(x,y)(\nabla_{y}w^{i}(x, y) -\nabla_{y}w^{i}(x+h, y)).
\]
Define the difference mapping $\delta_{h}(F) = F(x+h, y)-F(x,y)$.
Then for any $\psi\in H^{1}_{per}(Y)$, we have
\begin{equation}\label{continuityDiff}
-\int_{Y}(A(x,y)\nabla_{y}\delta_{h}(w^{i})(x, y), \nabla \psi)dx = \int_{Y}(\delta_{h}(A)(x,y)(\nabla_{y}w^{i}(x+h, y) + e^{i}), \nabla \psi).
\end{equation}
Then plugging $\psi(x,y) = \delta_{h}(w^{i})(x, y) \in H^{1}_{per}(Y)$ in \eqref{continuityDiff} and using the uniform ellipticity of the coefficients, we have
\[
\begin{split}
\lambda \|\delta_{h}(w^{i})(x, \cdot)\|^{2}_{L^{2}(Y)}&\leq  \int_{Y}(\delta_{h}(A)(x,y)[\nabla_{y}w^{i}(x+h, y) + e^{i}], \nabla \delta_{h}(w^{i})(x,y))dy\\
&\leq \left(\int_{Y}|\delta_{h}(A)(x,y)[\nabla_{y}w^{i}(x+h, y) + e^{i}]|^{2}\right)^{1/2}\|\delta_{h}(w)(x,\cdot)\|_{L^{2}(Y)}
\end{split}
\]
The last inequality implies that
\[
\|\delta_{h}(w^{i})(x,\cdot)\|_{L^{2}(Y)}\leq\Lambda/\lambda \sum_{i=1}^{N} \left(\int_{Y}|\chi^{i}(x+h,y)-\chi^{i}(x,y)|^{2}|\nabla_{y}w^{i}(x+h, y) + e^{i}|^{2}dy\right)^{1/2}
\]
By Meyer's higher regularity result, $\nabla_{y}w^{i}(x+h, \cdot)\in L^{p}(Y)$ for some $p>2$. Moreover, the $L^{p}$ norm is bounded from above by a constant $C$ independent of $x$,  and $h$. After applying Holder's inequality we get
\[
\|\delta_{h}(w^{i})(x,\cdot)\|_{L^{2}(Y)}\leq\frac{C\Lambda}{\lambda} \sum_{i=1}^{N}\left(\int_{Y}|\chi^{i}(x+h,y)-\chi^{i}(x,y)|^{2}dy\right)^{1/2}
\]
Applying \eqref{continuity}, the right hand side approaches $0$ as $h\to 0$ and the proof is complete.

\section*{Acknowledgment}
This work is supported by  grants: NSF DMS-0807265 and AFOSR FA9550-05-0008.  Tadele Mengesha gratefully acknowledges the support of Coastal Carolina University, where he is an assistant professor and currently on leave of absence.

\end{document}